\newtheorem{theorem}{Theorem}[section]
\newtheorem{lemma}[theorem]{Lemma}
\newtheorem{corollary}[theorem]{Corollary}
\newtheorem{proposition}[theorem]{Proposition}
\theoremstyle{definition}
\newtheorem{remark}[theorem]{Remark}
\newtheorem{definition}[theorem]{Definition}
\numberwithin{equation}{section}
\begin{document}

\title{\Large\bf Equivalent Characterizations and Their Applications
of Solvability of $L^p$ Poisson--Robin(-Regularity)
Problems on Rough Domains
\footnotetext{\hspace{-0.35cm} 2020 {\it Mathematics Subject
Classification}. {Primary 35J25; Secondary 35J15, 35J05, 42B35, 42B25, 35B65.}
\endgraf{\it Key words and phrases}.  $L^p$ Robin problem, Poisson--Robin problem,
Poisson--Robin-regularity problem, $L^p$ Neumann problem, $L^p$ Dirichlet problem, solvability.
\endgraf This project is partially supported by the National Key Research
and Development Program of China (Grant No. 2020YFA0712900), the National Natural
Science Foundation of China (Grant Nos. 12431006 and 12371093),
the Key Project of Gansu Provincial National Science Foundation (Grant No. 23JRRA1022),
Longyuan Young Talents of Gansu Province, and the Fundamental Research Funds
for the Central Universities (Grant No. 2233300008).}}
\author{Xuelian Fu, Dachun Yang\footnote{Corresponding author, E-mail:
\texttt{dcyang@bnu.edu.cn}/{\color{red}\today}/Final version.}\ \ and Sibei Yang}
\date{}
\maketitle

\vspace{-0.8cm}

\begin{center}
\begin{minipage}{13.5cm}\small
{{\bf Abstract.}  Let $n\ge2$, $\Omega\subset\mathbb{R}^n$ be a bounded one-sided chord arc domain,
and $p\in(1,\infty)$. In this article, we study the (weak) $L^p$
Poisson--Robin(-regularity) problem for a uniformly elliptic operator
$L:=-\mathrm{div}(A\nabla\cdot)$ of divergence form on $\Omega$,
which considers weak solutions to the equation
$Lu=h-\mathrm{div}\boldsymbol{F}$ in $\Omega$ with the Robin boundary condition $A\nabla u\cdot\boldsymbol{\nu}+\alpha u=\boldsymbol{F}\cdot\boldsymbol{\nu}$
on the boundary $\partial\Omega$ for functions $h$ and $\boldsymbol{F}$ in some tent spaces.
Precisely, we establish several equivalent characterizations of the
solvability of the (weak) $L^p$ Poisson--Robin(-regularity)
problem and clarify the relationship between the $L^p$ Poisson--Robin(-regularity) problem and the classical $L^p$ Robin problem. Moreover, we also give an extrapolation
property for the solvability of the classical $L^p$ Robin problem.
As applications, we further prove that, for the Laplace operator $-\Delta$ on the bounded Lipschitz
domain $\Omega$, the $L^p$ Poisson--Robin and the $L^q$ Poisson--Robin-regularity problems are respectively solvable for $p\in(2-\varepsilon_1,\infty)$
and $q\in(1,2+\varepsilon_2)$, where $\varepsilon_1\in(0,1]$ and
$\varepsilon_2\in(0,\infty)$ are constants depending only on $n$ and the Lipschitz
constant of $\Omega$ and, moreover, these ranges $(2-\varepsilon_1,\infty)$ of $p$ and $(1,2+\varepsilon_2)$ of $q$ are sharp.
The main results in this article are the analogues of the corresponding
results of both the $L^p$ Poisson--Dirichlet(-regularity) problem, established by
M. Mourgoglou, B. Poggi and X. Tolsa [J. Eur. Math. Soc. 2025], and of the $L^p$ Poisson--Neumann(-regularity) problem,
established by J. Feneuil and L. Li [arXiv: 2406.16735], in the Robin case.}
\end{minipage}
\end{center}

\vspace{0.1cm}

\section{Introduction and main results\label{s1}}

Let $n\ge2$, $\Omega\subset\mathbb{R}^n$ be a bounded one-sided chord arc domain,
and $p\in(1,\infty)$. In this article, motivated by the recent work \cite{mpt22,fl24} on the solvability of
the $L^p$ Poisson--Dirichlet(-regularity) problem and the $L^p$ Poisson--Neumann(-regularity) problem for a uniformly elliptic operator
$L:=-\mathrm{div}(A\nabla\cdot)$ of divergence form on $\Omega$,
we study the solvability of the $L^p$ Poisson--Robin(-regularity) problem
for $L$ on $\Omega$ and clarify the relationship between the $L^p$ Poisson--Robin(-regularity) problem and the classical $L^p$ Robin problem.
As applications, we further obtain, for the special case of $L:=-\Delta$ being the Laplace operator on the bounded Lipschitz domain $\Omega$,
the $L^p$ Poisson--Robin and the $L^q$ Poisson--Robin-regularity problems are respectively solvable for any $p\in(2-\varepsilon_1,\infty)$
and $q\in(1,2+\varepsilon_2)$, where $\varepsilon_1\in(0,1]$ and $\varepsilon_2\in(0,\infty)$ are constants depending only on $n$
and the Lipschitz constant of $\Omega$ and, moreover, these ranges
$(2-\varepsilon_1,\infty)$ of $p$ and $(1,2+\varepsilon_2)$ of $q$ are sharp.

The $L^p$ solvability of homogeneous boundary value problems
for second-order linear elliptic partial
differential equations with non-smooth coefficients on
rough domains developed rapidly in the last few decades.
Indeed, since the initial work of Dahlberg \cite{d77} on
the relationship between harmonic measures and surface measures on
the boundary of Lipschitz domains in $\mathbb{R}^n$, there emerged a lot
of significant works on various boundary value problems for
second-order elliptic operators $L:=-\mathrm{div\,}(A\nabla\cdot)$
with $L^p$ boundary data (see, for example, \cite{ahmmt20,dk87,jk81a,k94,kp93,mpt22}),
where most of the work is related to the Dirichlet problem. Precisely, there exist
various characterizations for the solvability of
the $L^p$ Dirichlet problem (see, for example, \cite{ahmmt20,fjr78,hl18,hlm19,jk81a,jk82,kkpt00,v84}), and simultaneously there exist a large amount of literatures
regarding the solvability of the $L^p$ Dirichlet problem for elliptic operators
of divergence form with non-smooth coefficients on rough domains (see, for example, \cite{aaahk11,chmt20,dj90,dp19,dpp07,fp22,hkmp15a,hlmp22,kp01}).
In particular, the necessary and sufficient geometric conditions
on domains for guaranteeing the solvability of the $L^p$ Dirichlet problem
is one of the core issues (see, for example, \cite{ahmmt20,gmt18,hmm16,hmmtz21,mt22}).

Compared with the Dirichlet case, the Neumann problem with $L^p$
boundary data remains less understood (see, for example, \cite{fl24,hs25,k94,mt24}).
To deal with the solvability of the $L^p$ Neumann problem, one typically needs to impose more restrictive assumptions
on both the operator and the domains (see, for example, \cite{dhp23,dpr17,jk81b,kp93,kp96,kr09,ks08,mt24}).
It is remarkable to point out that it is a long standing open problem to solve the $L^p$ Neumann problem in chord arc domains or more general domains
for the Laplace operator or more general elliptic operators (see, for example,
\cite{k94,kp93,kp96,mt24}).

Moreover, as a natural extension of the Dirichlet and the Neumann boundary value problems, the Robin type boundary value problem is ubiquitous
in physics and biology (see, for example, \cite{ddemm24,gfs06,w84}). The solvability of the $L^p$ Robin problem for the Laplace operator on
Lipschitz domains was obtained by Lanzani and Shen \cite{ls05} (see also \cite{yyy18}).
Meanwhile, the global Sobolev regularity for the Robin type boundary
value problem on Lipschitz domains or more general rough domains was studied
by Dong and Li \cite{dl21} and the second and the third authors of this article and
Yuan \cite{yyy21} (see also \cite{yyz24}).

Let $p\in(1,\infty)$ and $p'$ be the H\"older conjugate of $p$, that is, $\frac1p+\frac{1}{p'}=1$. Compared with the $L^p$ solvability of homogeneous
boundary value problems, the literatures on the $L^p$ solvability of corresponding inhomogeneous problems are far more scant. Recently,
in order to seek an effective approach to solve the regularity problem for second-order elliptic operators with non-smooth
coefficients on rough domains, Mourgoglou et al. \cite{mpt22} seminally studied the $L^p$
solvability of the following inhomogeneous Dirichlet problem on the one-sided chord arc domain $\Omega$:
\begin{equation}\label{e1.1}
\begin{cases}
-\mathrm{div\,}(A\nabla u)=h-\mathrm{div\,}\boldsymbol{F}\ \ & \text{in}\ \ \Omega,\\
 u=f \ \ & \text{on}\ \ \partial\Omega,\\
 \mathcal{N}(u)\in L^p(\partial\Omega),
\end{cases}
\end{equation}
which is called the \emph{$L^p$ Poisson--Dirichlet problem},
where $\mathcal{N}$ denotes the non-tangential maximal function (see Definition \ref{NAC}). Precisely, Mourgoglou et al. \cite{mpt22}
proved that the solvability of the $L^p$ Poisson--Dirichlet problem
is equivalent to the solvability of the homogeneous $L^{p'}$ Dirichlet problem, which is defined by setting $h\equiv0$ and
$\boldsymbol{F}=\boldsymbol{0}$ in \eqref{e1.1}. Moreover, inspired by the work of Mourgoglou et al. \cite{mpt22},
Feneuil and Li \cite{fl24} studied the $L^p$ solvability of the inhomogeneous Neumann problem, which is called the
\emph{$L^p$ Poisson--Neumann problem}, defined via letting $h\equiv0$ and replacing the Dirichlet boundary condition $u=f$
by the Neumann boundary condition $\frac{\partial u}{\partial\boldsymbol{\nu}}=\boldsymbol{F}\cdot\boldsymbol{\nu}$ in \eqref{e1.1}.
To be more precise, Feneuil and Li \cite{fl24} proved that the solvability of the $L^{p}$ Poisson--Neumann problem
implies the solvability of the homogeneous $L^{p'}$ Neumann problem, and the solvability of the homogeneous $L^{p'}$ Neumann problem implies
the solvability of the $L^{p}$ Poisson--Neumann problem under the additional assumption that the homogeneous $L^{p}$ Dirichlet problem is solvable.
Meanwhile, by obtaining an extrapolation theorem for the solvability of the $L^p$ Poisson--Neumann problem, Feneuil and Li \cite{fl24}
obtained a new extrapolation theorem for the solvability of the homogeneous $L^p$ Neumann problem, which improves the extrapolation theorem
for the homogeneous $L^p$ Neumann problem established by Kenig and Pipher \cite{kp93}.

To state the main results of this article, we start with presenting
some necessary symbols.

\begin{definition}
Let $n\ge2$. A Borel measure $\sigma$ on $\mathbb{R}^n$ is said to be $(n-1)$-\emph{Ahlfors regular} if there
exists a positive constant $C$ such that, for any $r\in(0,\mathrm{diam\,}(\mathrm{\,supp}\sigma))$ and $x\in\mathrm{supp\,}\sigma$,
$$
C^{-1}r^{n-1}\le C\sigma(B(x,r))\le Cr^{n-1}.
$$
Here, and thereafter, $B(x,r):=\{y\in\mathbb{R}^n:|x-y|<r\}$, $$\mathrm{supp\,}\sigma:=\left\{y\in\mathbb{R}^n:\sigma(B(y,s))>0\ \text{for any}\  s\in(0,\infty)\right\},$$
and, for any measurable set $E\subset\mathbb{R}^n$, $\mathrm{diam\,}(E):=\sup\{|x-y|: x,y\in E\}$.

Moreover, a closed set $E\subset\mathbb{R}^n$ is said to be \emph{$(n-1)$-Ahlfors regular} if $\mathcal{H}^{n-1}|_E$
is $(n-1)$-Ahlfors regular, where $\mathcal{H}^{n-1}$ denotes the $(n-1)$-dimensional Hausdorff measure.
\end{definition}

\begin{definition}
Let $n\ge2$ and $\Omega\subset\mathbb{R}^n$ be a domain.
Then $\Omega$ is called a \emph{one-sided non-tangentially accessible} (for short, NTA) \emph{domain} if it satisfies
the corkscrew and the Harnack chain conditions (see Definition \ref{CSandHC}).

Furthermore, $\Omega$ is called a \emph{one-sided chord arc domain}
(for short, CAD) if $\Omega$ is a one-sided NTA domain, $\partial\Omega$ is $(n-1)$-Ahlfors regular, and $\Omega$ is bounded
whenever $\partial\Omega$ is bounded.
Meanwhile, denote $\mathcal{H}^{n-1}|_{\partial\Omega}$ by $\sigma$ and call it the \emph{surface measure} on $\partial\Omega$.
\end{definition}

\begin{definition}\label{NAC}
Let $n\ge2$ and $\Omega\subset\mathbb{R}^n$ be a one-sided CAD. Let $a\in(1,\infty)$
and $c\in(0,1)$. Denote by $L^2_{\mathrm{loc}}(\Omega)$ the \emph{set} of all measurable functions $f$ on $\Omega$ satisfying
$\int_{K}|f(x)|^2\,dx<\infty$ for any compact set $K\subset\Omega$.
\begin{enumerate}[(a)]
\item Let $u$ be a measurable function on $\Omega$.
The \emph{non-tangential maximal function} $\mathcal{N}^a(u)$ of $u$ is defined by setting, for any $x\in\partial\Omega$,
$$
\mathcal{N}^a(u)(x):=\sup_{y\in\gamma_{a}(x)}|u(y)|,
$$
where the \emph{cone} $\gamma_{a}(x)$ with vertex $x$ and aperture $a$
is defined by setting
$$
\gamma_{a}(x):=\left\{z\in\Omega:|z-x|<a\delta(z)\right\}.
$$
Here, and thereafter, for any $z\in\Omega$, $\delta(z):=\text{dist\,}(z,\partial\Omega):=\inf\{|z-y|: y\in\partial\Omega\}$.
\item Let $u\in L_{\mathrm{loc}}^2(\Omega)$. The \emph{modified non-tangential
maximal function} $\widetilde{\mathcal{N}}^{a,c}(u)$ of $u$ is defined by
setting, for any $x\in\partial\Omega$,
$$
\widetilde{\mathcal{N}}^{a,c}(u)(x):=\sup_{y\in\gamma_{a}(x)}
\left[\fint_{B(y,c\delta(y))}|u(z)|^2\,dz\right]^{\frac{1}{2}}.
$$
Here, and thereafter, for any locally integrable function $f$ on $\mathbb{R}^n$,
$$\fint_{B}f(z)\,dz:
=\frac{1}{|B|}\int_{B}f(z)\,dz.$$

\item Let $F\in L_{\mathrm{loc}}^2(\Omega)$. The \emph{Lusin area function}
$\widetilde{\mathcal{A}}_{1}^{a,c}(F)$ of $F$ is defined by setting, for any $x\in\partial\Omega$,
$$
\widetilde{\mathcal{A}}_{1}^{a,c}(F)(x):=\int_{\gamma_{a}(x)}
\left[\fint_{B(y,c\delta(y))}|F(z)|^2\,dz\right]^{\frac{1}{2}}\,\frac{dy}{[\delta(y)]^{n}}.
$$

\item Let $F\in L_{\mathrm{loc}}^2(\Omega)$. The \emph{averaged Carleson functional}
$\widetilde{\mathcal{C}}_{1}^{c}(F)$ of $F$ is defined by setting, for any $x\in\partial\Omega$,
$$
\widetilde{\mathcal{C}}_{1}^{c}(F)(x):=
\sup_{r\in(0,\mathrm{dist\,}(\partial\Omega))}
\frac{1}{\sigma(B(x,r))}\int_{B(x,r)\cap\Omega}\left[\fint_{B(y,c\delta(y))}
|F(z)|^2\,dz\right]^{\frac{1}{2}}\frac{dy}{\delta(y)}.
$$
\end{enumerate}
In particular, let
$$
\gamma(x):=\gamma_2(x), \ \ \widetilde{\mathcal{N}}:=\widetilde{\mathcal{N}}^{2,1/4}, \ \ \widetilde{\mathcal{A}}_1:
=\widetilde{\mathcal{A}}_1^{2,1/4}, \ \text{and} \ \ \widetilde{\mathcal{C}}_1:=\widetilde{\mathcal{C}}_1^{1/4}.
$$
\end{definition}
It is worth pointing out that the $L^p$ norms of the modified non-tangential maximal function, the Lusin area function, and the averaged
Carleson functional are respectively equivalent under the change of the aperture $a$ or the radius $c$ (see Lemma \ref{TentEqui}).
Moreover, when we do not need to specify neither $\alpha $ nor $c$, we omit the corresponding superscript.

\begin{definition}
Let $\Omega\subset\mathbb{R}^n$ be a domain. For any given $x\in\Omega$, let $A(x):=\{a_{ij}(x)\}_{i,j=1}^n$ denote an $n\times n$
matrix with real-valued, bounded, and measurable entries.
Then $A$ is said to satisfy the \emph{uniform ellipticity condition} if there exists a positive constant
$\mu_0\in(0,1]$ such that, for any $x\in \Omega$ and $\xi:=(\xi_1,\cdots,\xi_n)\in\mathbb{R}^n$,
\begin{equation}\label{e1.2}
\mu_0 |\xi|^2\le\sum_{i,j=1}^na_{ij}(x)\xi_i\xi_j\le\mu_0^{-1} |\xi|^2.
\end{equation}
Meanwhile, the operator $L:=-\mathrm{div}(A\nabla\cdot)$ is said to be \emph{uniformly elliptic} if the matrix $A$ satisfies \eqref{e1.2}.
Moreover, denote by $L^*:=-\mathrm{div}(A^T\nabla\cdot)$ the \emph{adjoint operator}
of $L$, where $A^T$ is the \emph{transpose} of $A$.
\end{definition}

Let $p\in[1,\infty)$. Denote by $W^{1,p}(\Omega)$ the \emph{Sobolev space} on $\Omega$ equipped with the \emph{norm}
$$\|f\|_{W^{1,p}(\Omega)}:=\|f\|_{L^p(\Omega)}+\|\,|\nabla f|\,\|_{L^p(\Omega)},$$
where $\nabla f$ denotes the \emph{distributional gradient} of $f$.
Furthermore, $W^{1,p}_{0}(\Omega)$ is defined to be the \emph{closure} of $C^{\infty}_{\mathrm{c}} (\Omega)$ in $W^{1,p}(\Omega)$, where
$C^{\infty}_{\mathrm{c}}(\Omega)$ denotes the set of all \emph{infinitely differentiable functions on $\Omega$ with compact support contained in $\Omega$}.

Now, we give the definition of weak solutions to the Robin problem.

\begin{definition}
Let $n\ge2$ and $\Omega\subset\mathbb{R}^n$ be a bounded one-sided CAD. Assume that $\alpha$
is a measurable function on $\partial\Omega$ satisfying
\begin{equation}\label{e1.3}
0\le\alpha\in L^{n-1+\varepsilon_0}(\partial\Omega,\sigma)\ \ \text{and}\ \ \alpha\ge c_0\ \ \text{on}\ \ E_0\subset\partial\Omega,
\end{equation}
where $\varepsilon_0,c_0\in(0,\infty)$ are given constants and the measurable set $E_0$ satisfies $\sigma(E)>0$.

Let $L:=-\mathrm{div}(A\nabla\cdot)$ be a uniformly elliptic operator, $h\in L^2(\Omega)$, $\boldsymbol{F}\in L^2(\Omega;\mathbb{R}^n)$,
and $f\in L^2(\partial\Omega,\sigma)$.
Then $u\in W^{1,2}(\Omega)$ is called a \emph{weak solution} to the \emph{Robin problem}
\begin{equation}\label{e1.4}
\begin{cases}
Lu=h-\mathrm{div\,}\boldsymbol{F}\ \ & \text{in}\ \ \Omega,\\
\displaystyle\frac{\partial u}{\partial\boldsymbol{\nu}}+\alpha  u=f+\boldsymbol{F}\cdot\boldsymbol{\nu} \ \ & \text{on}\ \ \partial\Omega,
\end{cases}
\end{equation}
where $\boldsymbol{\nu}:=(\nu_1,\ldots,\nu_n)$ denotes the \emph{outward unit normal}
to $\partial\Omega$ and $\frac{\partial u}{\partial\boldsymbol{\nu}}:=(A\nabla u)\cdot\boldsymbol{\nu}$ denotes the \emph{conormal derivative} of $u$ on $\partial\Omega$,
if, for any $\phi\in C_{\rm c}^{\infty}(\mathbb{R}^n)$ (the set of all infinitely differentiable
functions on $\mathbb{R}^n$ with compact support),
\begin{equation}\label{e1.5}
\int_{\Omega}A\nabla u\cdot\nabla\phi\,dx+\int_{\partial\Omega}\alpha  u\phi\,d\sigma
=\int_{\partial\Omega}f\phi\,d\sigma+\int_{\Omega}(h\phi+\boldsymbol{F}\cdot\nabla\phi)\,dx.
\end{equation}
\end{definition}

\begin{remark}\label{r1.1}
We point out that, using the Lax--Milgram theorem,
one can find that there exists a unique weak solution
$u\in W^{1,2}(\Omega)$ to the Robin problem \eqref{e1.4} (see \cite{ddemm24,wy25} for the details). Moreover, when $f\equiv0$
in \eqref{e1.4}, we further have
$$
\|u\|_{W^{1,2}(\Omega)}\le C\left[\|h\|_{L^2(\Omega)}+\|\boldsymbol{F}\|_{L^2(\Omega;\mathbb{R}^n)}\right],
$$
where $C$ is a positive constant independent of $u$, $h$, and $\boldsymbol{F}$.
\end{remark}

Next, following the definition of the solvability of the $L^p$ Neumann problem \cite{kp93,kp96,fl24}, we give the definition of
the solvability of the $L^p$ Robin problem.

\begin{definition}\label{defR}
Let $n\ge2$, $\Omega\subset\mathbb{R}^n$ be a bounded one-sided CAD, $p\in(1,\infty)$,
$L:=-\mathrm{div}(A\nabla\cdot)$ be a uniformly elliptic operator,
and $\alpha$ be the same as in \eqref{e1.3}. The $L^p$ Robin problem
$(\mathrm{R}_p)_{L}$ is said to be \emph{solvable} if there exists
a positive constant $C$ such that, for any $f\in C_{\rm c}(\partial\Omega)$
(the set of all continuous
functions on $\partial\Omega$ with compact support), the weak solution $u$ to the Robin problem
\begin{equation}\label{e1.6}
\begin{cases}
Lu=0 \ \ & \text{in}\ \ \Omega,\\
\displaystyle\frac{\partial u}{\partial\boldsymbol{\nu}}+\alpha  u=f \ \ & \text{on}\ \ \partial\Omega
\end{cases}
\end{equation}
satisfies the estimate
$$
\left\|\widetilde{\mathcal{N}}(\nabla u)\right\|_{L^{p}(\partial \Omega,\sigma)}\le C \|f\|_{L^{p}(\partial \Omega,\sigma)}.
$$

Moreover, the $L^p$ Robin problem $(\widetilde{\mathrm{R}}_{p})_{L}$
is said to be \emph{solvable} if there exists a positive
constant $C$ such that, for any $f\in C_{\rm c}(\partial\Omega)$,
the weak solution $u$ to \eqref{e1.6} satisfies
$$
\left\|\widetilde{\mathcal{N}}(\nabla u)\right\|_{L^{p}(\partial \Omega,\sigma)}+\left\|\widetilde{\mathcal{N}}(u)\right\|_{L^{p}(\partial \Omega,\sigma)}
\le C \|f\|_{L^{p}(\partial \Omega,\sigma)}.
$$
\end{definition}

\begin{remark}
We point out that the solvability of both the Robin problems $(\mathrm{R}_p)_{L}$ and $(\widetilde{\mathrm{R}}_{p})_{L}$ is equivalent
(see Proposition \ref{RobinEquiv}).
\end{remark}

Denote by $L_{\rm c}^{\infty}(\Omega)$ the set of all bounded measurable
functions on $\Omega$ with compact support.
Following \cite{mpt22,fl24}, we introduce the $L^p$ (weak)
Poisson--Robin problem and the $L^p$ (weak) Poisson--Robin-regularity problem.

\begin{definition}\label{defPR}
Let $n\ge2$, $\Omega\subset\mathbb{R}^n$ be a bounded one-sided {\rm CAD}, $p\in(1,\infty)$,
$L:=-\mathrm{div}(A\nabla\cdot)$ be a uniformly elliptic operator,
and $\alpha$ be the same as in \eqref{e1.3}. The $L^p$ Poisson--Robin
problem $(\mathrm{PR}_{p})_{L}$ is said to be \emph{solvable}
if there exists a positive constant $C$ such that, for any $\boldsymbol{F}\in L_{\rm c}^{\infty}(\Omega;\mathbb{R}^n)$,
the weak solution $u$ to the Robin problem
\begin{equation}\label{e1.7}
\begin{cases}
Lu=-\mathrm{div\,}\boldsymbol{F} \ \ & \text{in}\ \ \Omega,\\
\displaystyle\frac{\partial u}{\partial\boldsymbol{\nu}}+\alpha  u=\boldsymbol{F}\cdot\boldsymbol{\nu} \ \ & \text{on}\ \ \partial\Omega
\end{cases}
\end{equation}
satisfies
$$
\left\|\widetilde{\mathcal{N}}(u)\right\|_{L^{p}(\partial \Omega,\sigma)} \leq C
\left\|\widetilde{\mathcal C}_1(\delta\boldsymbol{F})\right\|_{L^{p}(\partial \Omega,\sigma)},
$$
where, for any $x\in\Omega$, $\delta(x):=\mathrm{dist\,}(x,\partial\Omega)$.

Moreover, the weak $L^p$ Poisson--Robin problem
$(\mathrm{wPR}_{p})_{L}$ is said to be \emph{solvable}
if there exists a positive constant $C$ such that, for any $\boldsymbol{F}\in L_{\rm c}^{\infty}(\Omega;\mathbb{R}^n)$,
the weak solution $u$ to the Robin problem \eqref{e1.7} satisfies
$$
\left\|\widetilde{\mathcal{N}}(\delta\nabla u)\right\|_{L^{p}(\partial \Omega,\sigma)} \le C
\left\|\widetilde{\mathcal C}_1(\delta\boldsymbol{F})\right\|_{L^{p}(\partial \Omega,\sigma)}.
$$
\end{definition}

The next definition presents the Poisson--Robin-regularity problem, which is the dual formulation of the Poisson--Robin problem
in some sense.

\begin{definition}\label{defPRR}
Let $n\ge2$, $\Omega\subset\mathbb{R}^n$ be a bounded one-sided {\rm CAD}, $p\in(1,\infty)$,
$L:=-\mathrm{div}(A\nabla\cdot)$ be a uniformly elliptic operator,
and $\alpha$ be the same as in \eqref{e1.3}. The $L^p$ Poisson--Robin-regularity problem $(\mathrm{PRR}_{p})_{L}$ is said to be \emph{solvable}
if there exists a positive constant $C$ such that, for any $h\in L_{\rm c}^{\infty}(\Omega)$ and
$\boldsymbol{F}\in L_{\rm c}^{\infty}(\Omega;\mathbb{R}^n)$, the weak solution $u$ to the Robin problem
\begin{equation}\label{e1.8}
\begin{cases}
Lu=h-\mathrm{div\,}\boldsymbol{F} \ \ & \text{in}\ \ \Omega,\\
\displaystyle\frac{\partial u}{\partial\boldsymbol{\nu}}+\alpha  u=\boldsymbol{F}
\cdot\boldsymbol{\nu} \ \ & \text{on}\ \ \partial\Omega
\end{cases}
\end{equation}
satisfies
$$
\left\|\widetilde{\mathcal{N}}(\nabla u)\right\|_{L^{p}(\partial \Omega,\sigma)} \le C \left\|
\widetilde{\mathcal C}_1(\delta|h|+|\boldsymbol{F}|)\right\|_{L^{p}(\partial \Omega,\sigma)}.
$$
Moreover, the weak $L^p$ Poisson--Robin-regularity
problem $(\mathrm{wPRR}_{p})_{L}$ is said to be \emph{solvable}
if there exists a positive constant $C$ such that, for any $\boldsymbol{F}\in L_{\rm c}^{\infty}(\Omega;\mathbb{R}^n)$, the weak solution $u$
to the Robin problem \eqref{e1.8} with $h\equiv0$ satisfies
$$
\left\|\widetilde{\mathcal{N}}(\nabla u)\right\|_{L^{p}(\partial \Omega,\sigma)} \le C
\left\|\widetilde{\mathcal C}_1(\boldsymbol{F})\right\|_{L^{p}(\partial \Omega,\sigma)}.
$$
\end{definition}

\begin{remark}\label{r1.2}
Assume that $\alpha\equiv0$ and $\int_{\partial\Omega} f\,d\sigma=0$ in \eqref{e1.6}. Then the $L^p$ Robin problem $({\rm R}_p)$
is precisely the $L^p$ Neumann problem, denoted by $({\rm N}_p)$
(see, for example, \cite{fl24,kp93}).
Similarly, letting $\alpha\equiv0$ in \eqref{e1.7} and \eqref{e1.8}, then the $L^p$ Poisson--Robin problem $(\mathrm{PR}_{p})_{L}$
and the $L^p$ Poisson--Robin-regularity problem $(\mathrm{PRR}_{p})_{L}$ are exactly respectively the $L^p$ Poisson--Neumann problem $(\mathrm{PN}_{p})_{L}$
and the $L^p$ Poisson--Neumann-regularity problem $(\mathrm{PNR}_{p})_{L}$ introduced in \cite{fl24}.
\end{remark}

The first result of this article is the following equivalence between the (weak) Poisson--Robin problem and the (weak) Poisson--Robin-regularity problem,
which can be regarded as a natural analogue
of \cite[Theorem 1.22]{mpt22} and \cite[Propostion 1.17]{fl24}
in the Robin case.

\begin{theorem}\label{PRandPRR}
Let $n\ge2$, $\Omega\subset\mathbb{R}^n$ be a bounded one-sided {\rm CAD}, $p\in(1,\infty)$, $p'$ be the H\"older conjugate of $p$,
$L:=-\mathrm{div}(A\nabla\cdot)$ be a uniformly elliptic operator, and $\alpha$ be the same as in \eqref{e1.3}.
\begin{enumerate}[{\rm(i)}]
\item The following assertions are mutually equivalent.
\begin{enumerate}[{\rm(a)}]
\item The Poisson--Robin problem $({\rm PR}_{p'})_{L^*}$ is solvable;
\item The Poisson--Robin-regularity problem $({\rm PRR}_{p})_L$ is solvable;
\item The Poisson--Robin-regularity problem $({\rm PRR}_{p})_L$ is solvable for $\boldsymbol{F}=\mathbf{0}$.
\end{enumerate}

\item The following assertions are equivalent.
\begin{enumerate}
\item[{\rm(d)}] The weak Poisson--Robin problem $({\rm wPR}_{p'})_{L^*}$ is solvable;
\item[{\rm(e)}] The weak Poisson--Robin-regularity problem $({\rm wPRR}_{p})_L$ is solvable.
 \end{enumerate}
\end{enumerate}
\end{theorem}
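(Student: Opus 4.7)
The plan is to adapt the duality strategy of Feneuil--Li \cite{fl24} from the Neumann setting to the Robin one. The extra Robin boundary term $\int_{\partial\Omega}\alpha uv\,d\sigma$ appears symmetrically on both sides of the Green-type identity pairing the two problems and therefore cancels out---this is the key novelty compared with the Neumann argument. The two ingredients are (I) a Green identity coupling the Robin problem for $L$ with that for $L^*$, and (II) the tent-space duality between the averaged Carleson functional $\widetilde{\mathcal{C}}_1$ and the modified non-tangential maximal function $\widetilde{\mathcal{N}}$, which is a standard tool that would be established in a preliminary section.

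\textbf{Green identity.} If $u$ weakly solves $Lu=h_u-\mathrm{div}\,\boldsymbol{F}_u$ with $\partial u/\partial\boldsymbol{\nu}+\alpha u=\boldsymbol{F}_u\cdot\boldsymbol{\nu}$ and $v$ weakly solves $L^*v=h_v-\mathrm{div}\,\boldsymbol{F}_v$ with $\partial v/\partial\boldsymbol{\nu}_{A^T}+\alpha v=\boldsymbol{F}_v\cdot\boldsymbol{\nu}$, testing $u$'s weak formulation against $v$ and vice versa, then subtracting, causes both $\int_\Omega A\nabla u\cdot\nabla v\,dx$ and $\int_{\partial\Omega}\alpha uv\,d\sigma$ to cancel, leaving
\begin{equation*}
\int_\Omega h_u v\,dx+\int_\Omega\boldsymbol{F}_u\cdot\nabla v\,dx=\int_\Omega h_v u\,dx+\int_\Omega\boldsymbol{F}_v\cdot\nabla u\,dx.\quad(\star)
\end{equation*}
Approximation of $v$ by $C_c^\infty(\mathbb{R}^n)$ functions (using density in $W^{1,2}(\Omega)$ on a one-sided CAD) legitimizes this choice of test function, with continuity of the Robin solution operator coming from Remark \ref{r1.1}.

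\textbf{Part (i).} The implication (b)$\Rightarrow$(c) is immediate by setting $\boldsymbol{F}=\mathbf{0}$. For (c)$\Rightarrow$(a), let $\boldsymbol{G}\in L^\infty_c(\Omega;\mathbb{R}^n)$ and let $v$ solve $L^*v=-\mathrm{div}\,\boldsymbol{G}$ with Robin data. To bound $\|\widetilde{\mathcal{N}}(v)\|_{L^{p'}(\partial\Omega)}$, dualize against a nonnegative $f\in L^p(\partial\Omega)$ of norm one. By measurable selection, for each $x\in\partial\Omega$, pick $y(x)\in\gamma(x)$ nearly realizing the supremum in $\widetilde{\mathcal{N}}(v)(x)$ and a unit-norm $\psi_x\in L^2(B_x)$, where $B_x:=B(y(x),\delta(y(x))/4)$, realizing the $L^2$ duality on $B_x$. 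This gives $\int_{\partial\Omega}f\widetilde{\mathcal{N}}(v)\,d\sigma\sim\int_\Omega vH\,dz$ with
$$
H(z):=\int_{\partial\Omega}f(x)|B_x|^{-1/2}\psi_x(z)\mathbf{1}_{B_x}(z)\,d\sigma(x).
$$
Applying $(\star)$ with $h_u=H$, $\boldsymbol{F}_u=\mathbf{0}$, $h_v=0$, $\boldsymbol{F}_v=\boldsymbol{G}$ converts the pairing into $\int_\Omega\boldsymbol{G}\cdot\nabla u_H\,dz$, where $u_H$ solves $Lu_H=H$ with homogeneous Robin boundary. Tent-space duality then gives $|\int\boldsymbol{G}\cdot\nabla u_H|\lesssim\|\widetilde{\mathcal{C}}_1(\delta\boldsymbol{G})\|_{L^{p'}}\|\widetilde{\mathcal{N}}(\nabla u_H)\|_{L^p}$; hypothesis (c) bounds $\|\widetilde{\mathcal{N}}(\nabla u_H)\|_{L^p}$ by $\|\widetilde{\mathcal{C}}_1(\delta H)\|_{L^p}$, and a Carleson-type estimate on the explicit $H$ closes the loop. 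The implication (a)$\Rightarrow$(b) is symmetric: linearize $\|\widetilde{\mathcal{N}}(\nabla u)\|_{L^p}$ by a vector-valued selection and apply $(\star)$ to reduce to a pairing controlled by hypothesis (a) for $L^*$.

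\textbf{Part (ii) and main obstacle.} The weak equivalence (d)$\Leftrightarrow$(e) follows the same template but with $h_u=h_v=0$ in $(\star)$, so $\int\boldsymbol{F}\cdot\nabla v\,dx=\int\boldsymbol{G}\cdot\nabla u\,dx$, and the tent-space bound takes the matching weighted form $|\int\boldsymbol{F}\cdot\nabla v|\lesssim\|\widetilde{\mathcal{C}}_1(\boldsymbol{F})\|_{L^p}\|\widetilde{\mathcal{N}}(\delta\nabla v)\|_{L^{p'}}$, pairing the un-weighted Carleson norm appearing in (e) with the $\delta$-weighted non-tangential norm appearing in (d). The hardest point throughout is the Carleson-type estimate $\|\widetilde{\mathcal{C}}_1(\delta H)\|_{L^p}\lesssim\|f\|_{L^p}$ (and its analogs for the weak case) on the linearized data $H$. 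Since $H$ is a superposition of normalized $L^2$-bumps on Whitney balls $B_x$ of widely varying scales and centers, this requires a careful covering and overlap analysis using the Ahlfors regularity of $\sigma$, the Harnack chain property of $\Omega$, and a Hardy--Littlewood maximal inequality on $\partial\Omega$ to absorb the cone-overlap factor; this step is where the one-sided CAD hypothesis on $\Omega$ is really used.
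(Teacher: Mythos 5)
Your overall architecture (a Green-type identity in which the Robin terms $\int_{\partial\Omega}\alpha uv\,d\sigma$ cancel, combined with the $\widetilde{\mathcal N}$--$\widetilde{\mathcal C}_1$ tent-space duality) is exactly the scheme the paper uses, and your part (ii) and the implication (c)$\Rightarrow$(a) are in substance correct. One difference of implementation: the paper does not construct the linearizing datum $H$ by hand. It invokes the converse duality inequality \eqref{e2.2} of Lemma \ref{NdualC} together with the density of $L^\infty_{\rm c}(\Omega)$ in $\widetilde T^{p}_1(\Omega)$ (Lemma \ref{Dense}), applied to the truncation $u\mathbf 1_E$ for a compact exhaustion $E\uparrow\Omega$, which hands you directly a function $h_E\in L^\infty_{\rm c}(\Omega)$ with $\|\widetilde{\mathcal C}_1(\delta h_E)\|_{L^p}\lesssim 1$ realizing the pairing. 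The bound $\|\widetilde{\mathcal C}_1(\delta H)\|_{L^p}\lesssim\|f\|_{L^p}$ that you flag as ``the hardest point'' is precisely the content of \eqref{e2.2}, so in your write-up you are re-proving the duality you already listed as ingredient (II); either cite it and skip the bump construction, or carry out the covering/overlap argument in full, but as it stands that step is only a sketch.

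The genuine gap is in (a)$\Rightarrow$(b), which you dismiss as ``symmetric''. It is not. After linearizing $\|\widetilde{\mathcal N}(\nabla u)\|_{L^p}$ with a vector field $\boldsymbol G$ satisfying $\|\widetilde{\mathcal C}_1(\delta\boldsymbol G)\|_{L^{p'}}\lesssim1$ and applying $(\star)$ with $h_v=0$, $\boldsymbol F_v=\boldsymbol G$, the pairing splits as $\int_\Omega hv\,dx+\int_\Omega\boldsymbol F\cdot\nabla v\,dx$, where $L^*v=-\mathrm{div}\,\boldsymbol G$ with Robin data. The first term is fine: $\lesssim\|\widetilde{\mathcal C}_1(\delta h)\|_{L^p}\|\widetilde{\mathcal N}(v)\|_{L^{p'}}$ and hypothesis (a) controls $\|\widetilde{\mathcal N}(v)\|_{L^{p'}}$. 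But the second term must be estimated by $\|\widetilde{\mathcal C}_1(\boldsymbol F)\|_{L^p}\,\|\widetilde{\mathcal N}(\delta\nabla v)\|_{L^{p'}}$, and hypothesis (a) gives no bound on $\widetilde{\mathcal N}(\delta\nabla v)$ — only on $\widetilde{\mathcal N}(v)$. The paper bridges this with the pointwise estimate
\begin{equation*}
\widetilde{\mathcal N}(\delta\nabla v)(x)\lesssim\widetilde{\mathcal N}^{*}(v)(x)+\widetilde{\mathcal C}_1(\delta\boldsymbol G)(x),
\end{equation*}
obtained from interior Caccioppoli's inequality on Whitney balls (Lemma \ref{Cacciopoli}) applied to $L^*v=-\mathrm{div}\,\boldsymbol G$, followed by the change-of-aperture equivalence of Lemma \ref{TentEqui}; this is their \eqref{e3.10}. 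Without this ingredient your (a)$\Rightarrow$(b) does not close, and no appeal to symmetry supplies it (in part (ii) the issue does not arise only because the weak hypothesis (d) controls $\widetilde{\mathcal N}(\delta\nabla v)$ directly). Adding this Caccioppoli-plus-aperture step repairs the proposal.
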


We prove Theorem \ref{PRandPRR} by applying the duality between the modified non-tangential maximal function $\widetilde{\mathcal{N}}$ and the averaged
Carleson functional $\widetilde{\mathcal{C}}_1$ (see Lemma \ref{NdualC}).

Furthermore, one can find that the weak Poisson--Robin problem is
indeed weaker than the Poisson--Robin problem by definition.
In addition, we give a localization property, which can serve
as a bridge connecting the solvability of
the Poisson--Robin problem and the weak Poisson--Robin problem.
Similar local properties were given in \cite{mpt22} and \cite{fl24}, respectively,
for the Dirichlet and the Neumann problems.

\begin{definition}\label{defLoc}
Let $n\ge2$, $\Omega\subset\mathbb{R}^n$ be a bounded one-sided {\rm CAD}, $p\in(1,\infty)$, $L:=-\mathrm{div}(A\nabla\cdot)$ be a uniformly elliptic operator,
and $\alpha$ be the same as in \eqref{e1.3}. The \emph{local property}
$({\rm LocR}_p)_L$ is said to hold if there exists a positive constant
$C$ such that, for any ball $B:=B(x,r)$ with $x\in\partial\Omega$ and $r\in(0,\mathrm{diam\,}(\Omega))$ and for any local solution
$u\in W^{1,2}(2B\cap\Omega)$ to $Lu=0$ in $2B\cap\Omega$ with the Robin boundary condition $\frac{\partial u}{\partial\boldsymbol{\nu}}+\alpha  u=0$
on $2B\cap\partial\Omega$, it holds that
\begin{equation}\label{e1.9}
\left\|\widetilde{\mathcal{N}}(|\nabla u|\mathbf{1}_B)\right\|_{L^{p}(\partial \Omega,\sigma)}
\le C r^{\frac{n-1}{p}}\left[\fint_{2B\cap\Omega}|\nabla u|\,dx+\fint_{2B\cap\Omega}|u|\,dx\right],
\end{equation}
where $\mathbf{1}_{B}$ denotes the \emph{characteristic function} of $B$.
\end{definition}

\begin{remark}
Let $\Omega$ and $L:=-\mathrm{div}(A\nabla\cdot)$ be the same
as in Definition \ref{defLoc} and $p\in(1,\infty)$.
Feneuil and Li \cite[Definition 1.15]{fl24} introduced the local property
$({\rm LocN}_p)_{L}$ for the Neumann problem.
Precisely, the \emph{local property} $({\rm LocN}_p)_{L}$ is said to hold if,
for any ball $B:=B(x,r)$ with $x\in\partial\Omega$ and
$r\in(0,\mathrm{diam\,}(\Omega))$ and for any local solution
$u\in W^{1,2}(2B\cap\Omega)$ to $Lu=0$ in $2B\cap\Omega$ with the condition $\frac{\partial u}{\partial\boldsymbol{\nu}}=0$
on $2B\cap\partial\Omega$, it holds that
\begin{equation}\label{e1.10}
\left\|\widetilde{\mathcal{N}}(|\nabla u|\mathbf{1}_B)\right\|_{L^{p}(\partial \Omega,\sigma)}
\le C r^{\frac{n-1}{p}}\fint_{2B\cap\Omega}|\nabla u|\,dx.
\end{equation}

Indeed, in the Neumann case, using Poincar\'e's inequality,
one can easily prove that the local properties \eqref{e1.9}
and \eqref{e1.10} are equivalent. However, in the Robin case
\eqref{e1.9} and \eqref{e1.10} may not be equivalent,
and the appropriate version of the local property for the Robin problem
is \eqref{e1.9} (see, for example, \cite{dl21,yyy21}).
\end{remark}

Then we have the following conclusions.

\begin{theorem}\label{StrongLocWeak}
Let $n\ge2$, $\Omega\subset\mathbb{R}^n$ be a bounded one-sided {\rm CAD}, $p\in(1,\infty)$, $p'$ be the H\"older conjugate of $p$,
$L:=-\mathrm{div}(A\nabla\cdot)$ be a uniformly elliptic operator, and $\alpha$ be the same as in \eqref{e1.3}.
Then the following statements hold.
\begin{enumerate}[{\rm(i)}]
\item  Assume further that $\alpha\in L^p(\partial\Omega,\sigma)$ and $\|\alpha \|_{L^p(B(x,r)\cap\partial\Omega,\sigma)}
\le Cr^{(n-1)/p}$ for any $x\in\partial\Omega$ and $r\in(0,\mathrm{diam\,}(\Omega))$, where $C$ is a positive constant independent of $x$ and $r$.
If $({\rm PR}_{p'})_{L^*}$ is solvable, then $({\rm LocR}_p)_L$ holds.

\item  If $({\rm LocR}_p)_L$ holds, then there exists a positive constant $\varepsilon$, depending on $p$, $L$, and
$\Omega$, such that $({\rm LocR}_{q})_L$ holds for any $q\in[1,p+\varepsilon)$.

\item  If $n\ge3$ and $({\rm LocR}_p)_L$ holds, then $({\rm wPR}_{p'})_{L^*}$ is solvable.
\end{enumerate}
\end{theorem}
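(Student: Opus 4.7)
For part~(i), the plan is to invoke Theorem~\ref{PRandPRR}(i), which converts the hypothesis $(\mathrm{PR}_{p'})_{L^*}$ into the equivalent solvability of $(\mathrm{PRR}_p)_L$, and then to globalize the local solution $u$ by multiplication with a cutoff. Fix a smooth cutoff $\eta$ with $\eta\equiv1$ on $B$, $\mathrm{supp\,}\eta\subset\frac{3}{2}B$, and $|\nabla\eta|\lesssim 1/r$, and set $v:=\eta u$, extended by zero to $\Omega$. A direct computation, using $Lu=0$ in $2B\cap\Omega$ together with the Robin condition $\frac{\partial u}{\partial\boldsymbol{\nu}}+\alpha u=0$ on $2B\cap\partial\Omega$, shows that $v\in W^{1,2}(\Omega)$ is the weak solution to
\[
Lv=h-\mathrm{div}\,\widetilde{\boldsymbol{F}}\ \ \text{in }\Omega,\qquad \tfrac{\partial v}{\partial\boldsymbol{\nu}}+\alpha v=\widetilde{\boldsymbol{F}}\cdot\boldsymbol{\nu}\ \ \text{on }\partial\Omega,
\]
where $h:=-A\nabla u\cdot\nabla\eta$ and $\widetilde{\boldsymbol{F}}:=uA\nabla\eta$, both supported in the annulus $\tfrac{3}{2}B\setminus B$. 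Applying $(\mathrm{PRR}_p)_L$ to $v$ and using $\nabla v=\nabla u$ on $B$ reduces the proof to estimating $\|\widetilde{\mathcal{C}}_1(\delta|h|+|\widetilde{\boldsymbol{F}}|)\|_{L^p(\partial\Omega,\sigma)}$. Since $\delta|h|+|\widetilde{\boldsymbol{F}}|\lesssim(\delta|\nabla u|+|u|)r^{-1}\mathbf{1}_{(3/2)B\setminus B}$ and $\delta\le 2r$ on $\tfrac{3}{2}B$, a size computation of the Carleson functional restricted to this annular data yields the pointwise bound $\widetilde{\mathcal{C}}_1(\delta|h|+|\widetilde{\boldsymbol{F}}|)(x)\lesssim[\fint_{2B\cap\Omega}|\nabla u|\,dy+\fint_{2B\cap\Omega}|u|\,dy]\mathbf{1}_{CB\cap\partial\Omega}(x)$, and integration gives the factor $r^{(n-1)/p}$. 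The assumption $\|\alpha\|_{L^p(B(x,r)\cap\partial\Omega,\sigma)}\lesssim r^{(n-1)/p}$ enters precisely here, to absorb the boundary contribution $\alpha\eta u$ into $\widetilde{\boldsymbol{F}}\cdot\boldsymbol{\nu}$ within the $L^p$-bound.

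For part~(ii), I split the target exponent $q$ into two regimes. For $q\in[1,p]$, the function $\widetilde{\mathcal{N}}(|\nabla u|\mathbf{1}_B)$ is supported in a fixed enlargement $CB\cap\partial\Omega$ (outside of which no cone $\gamma(x)$ meets $B$), so H\"older's inequality combined with the Ahlfors regularity of $\sigma$ gives
\[
\|\widetilde{\mathcal{N}}(|\nabla u|\mathbf{1}_B)\|_{L^q(\partial\Omega,\sigma)}\lesssim r^{(n-1)(1/q-1/p)}\|\widetilde{\mathcal{N}}(|\nabla u|\mathbf{1}_B)\|_{L^p(\partial\Omega,\sigma)},
\]
and $(\mathrm{LocR}_q)_L$ follows from $(\mathrm{LocR}_p)_L$ at once. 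For $q\in(p,p+\varepsilon)$, I appeal to a Gehring-type self-improvement: a Caccioppoli inequality adapted to the Robin boundary condition, combined with the Sobolev--Poincar\'e embedding, yields a reverse H\"older inequality of the form $(\fint_{B_0}|\nabla u|^s)^{1/s}\lesssim\fint_{2B_0}|\nabla u|\,dy+r_0^{-1}\fint_{2B_0}|u|\,dy$ for boundary balls $B_0$ of radius $r_0$, with some $s>1$. Plugging this reverse H\"older bound into the integrand defining $\widetilde{\mathcal{N}}$ and invoking Gehring's lemma to self-improve the $L^p$-integrability of the non-tangential side yields $(\mathrm{LocR}_q)_L$ for $q$ slightly above $p$, with $\varepsilon$ controlled by the Gehring exponent.

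For part~(iii), the plan is to apply Theorem~\ref{PRandPRR}(ii) to recast the goal $(\mathrm{wPR}_{p'})_{L^*}$ as the solvability of $(\mathrm{wPRR}_p)_L$. Given $\boldsymbol{F}\in L^{\infty}_{\mathrm{c}}(\Omega;\mathbb{R}^n)$ and the weak solution $u$ to \eqref{e1.8} with $h\equiv 0$, I would carry out a Whitney/Vitali-type decomposition of $\partial\Omega$ into boundary balls $\{B_k:=B(x_k,r_k)\}$ with bounded overlap of $\{2B_k\}$, splitting $\boldsymbol{F}=\boldsymbol{F}\mathbf{1}_{4B_k\cap\Omega}+\boldsymbol{F}\mathbf{1}_{\Omega\setminus 4B_k}$ and writing $u=u_k^{\mathrm{near}}+u_k^{\mathrm{far}}$ on $B_k\cap\Omega$. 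The piece $u_k^{\mathrm{far}}$ then solves $Lu_k^{\mathrm{far}}=0$ in $2B_k\cap\Omega$ with the homogeneous Robin condition on $2B_k\cap\partial\Omega$, so that $(\mathrm{LocR}_p)_L$ applies directly and bounds its non-tangential contribution in terms of $L^1$-averages of $|\nabla u_k^{\mathrm{far}}|$ and $|u_k^{\mathrm{far}}|$ on $2B_k\cap\Omega$. For $u_k^{\mathrm{near}}$, I would combine the energy identity, the duality between $\widetilde{\mathcal{N}}$ and $\widetilde{\mathcal{C}}_1$ (Lemma~\ref{NdualC}), and the Sobolev embedding $W^{1,2}\hookrightarrow L^{2n/(n-2)}$ --- the source of the hypothesis $n\ge3$ --- to absorb the near contribution into $\|\widetilde{\mathcal{C}}_1(\boldsymbol{F})\|_{L^p(\partial\Omega,\sigma)}$. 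Summing in $k$ with bounded overlap completes the estimate. The main obstacle I anticipate is organizing the near/far splitting so that the Carleson norms on the right-hand side do not blow up from overcounting and the $L^p$-norms of the non-tangential pieces sum cleanly; this will likely require a stopping-time construction tuned to $\widetilde{\mathcal{C}}_1(\boldsymbol{F})$ rather than a straightforward Whitney cover.
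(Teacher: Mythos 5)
Your reduction in part (i) does not work. If you set $v:=\eta u$ with $h:=-A\nabla u\cdot\nabla\eta$ and $\widetilde{\boldsymbol{F}}:=uA\nabla\eta$, these data are supported in $\frac32B\setminus B$, a set that reaches $\partial\Omega$, so they are not in $L^\infty_{\rm c}(\Omega)$ as Definition \ref{defPRR} requires; much worse, since a Robin solution does not vanish on the boundary, $\widetilde{\mathcal{C}}_1(|\widetilde{\boldsymbol{F}}|)$ is generically infinite: near boundary points of the annulus the Whitney averages of $|u|/r$ stay bounded below while the weight $\delta(y)^{-1}\,dy$ is non-integrable up to $\partial\Omega$ (there is no $\delta$ in front of $|\boldsymbol{F}|$ in the $({\rm PRR}_p)_L$ estimate). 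Concretely, if $\alpha\equiv0$ on $2B\cap\partial\Omega$, then $u\equiv1$ is a local solution, your left-hand side is $+\infty$, and your claimed pointwise bound $\widetilde{\mathcal{C}}_1(\delta|h|+|\widetilde{\boldsymbol{F}}|)\lesssim[\fint_{2B\cap\Omega}|\nabla u|+\fint_{2B\cap\Omega}|u|]\mathbf{1}_{CB\cap\partial\Omega}$ fails. This is precisely why the paper does not cut off the solution: it argues by duality, choosing $\boldsymbol{F}$ dual to $\widetilde{\mathcal{N}}(\nabla u\mathbf{1}_B)$, truncating it to $B$, solving the adjoint Poisson--Robin problem with datum $\boldsymbol{F}\mathbf{1}_B$, integrating by parts against $(u-c_u)\phi$, and estimating the commutator terms via Caccioppoli, Moser, and the hypothesis $({\rm PR}_{p'})_{L^*}$ applied to that auxiliary solution. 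Note also that in your setup $\alpha\eta u$ is simply the Robin term of $v$ and needs no ``absorption,'' so the hypothesis on $\alpha$ is never actually used in your argument; in the correct proof it is needed exactly for the term $\int_{\partial\Omega}\alpha v\,c_u\phi\,d\sigma$ created by subtracting the constant $c_u$.

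Parts (ii) and (iii) also have gaps. For $q>p$, an interior Meyers/Gehring reverse H\"older inequality for $|\nabla u|$ ``plugged into the integrand of $\widetilde{\mathcal{N}}$'' does not raise the exponent of the boundary norm: improving the integrability inside the Whitney averages says nothing about the $L^{p+\varepsilon}(\partial\Omega,\sigma)$ norm of the supremum over cones. What is needed is a reverse H\"older inequality on boundary balls for $g:=\widetilde{\mathcal{N}}(|\nabla u|\mathbf{1}_B)+\widetilde{\mathcal{N}}(|u|\mathbf{1}_B)$, which the paper derives by a near/far splitting in which $({\rm LocR}_p)_L$ itself is applied at all smaller scales, together with the companion estimate for $\widetilde{\mathcal{N}}(|u|\mathbf{1}_B)$ and a change-of-aperture bound for the far piece; only then does Gehring's lemma on $\partial\Omega$ apply. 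For (iii), your outline is explicitly incomplete (you flag the resummation problem yourself), and it is unclear how the $L^1$ averages of $|\nabla u_k^{\rm far}|$ and $|u_k^{\rm far}|$ produced by $({\rm LocR}_p)_L$ could be summed against $\|\widetilde{\mathcal{C}}_1(\boldsymbol{F})\|_{L^p(\partial\Omega,\sigma)}$ alone without circularity. The paper instead proves a pointwise bound for $\widetilde{\mathcal{N}}(\delta\nabla u)(z)$ from the Green function representation $u(x)=\int_\Omega\nabla_yG_R(y,x)\cdot\boldsymbol{F}(y)\,dy$, splitting into the Whitney ball $2B_y$, a truncated cone, and the far region covered by Vitali balls; the far sum converges thanks to the boundary H\"older oscillation decay factor $(\delta(y)/|x_k-y|)^{\kappa}$ combined with the self-improved local property applied to $G_R(\cdot,w)$, and the hypothesis $n\ge3$ enters through the Green function size estimates of Lemma \ref{Green}, not through the Sobolev embedding $W^{1,2}\hookrightarrow L^{2n/(n-2)}$ as you suggest.
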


We prove Theorem \ref{StrongLocWeak} by applying the duality between the modified non-tangential maximal function $\widetilde{\mathcal{N}}$ and the averaged
Carleson functional $\widetilde{\mathcal{C}}_1$, Caccioppoli's inequality, the Moser type estimate, the boundary H\"older regularity estimate
for weak solutions to the Robin problem \eqref{e1.4}, and the properties of Green's function associated with the Robin problem.

Next, we present the relationship among the solvability of the
problems $({\rm R}_p)_L$, $({\rm PR}_{p'})_{L^*}$,
and $({\rm PRR}_p)_L$. To do this, we first recall the definition of the $L^p$ Dirichlet problem (see, for example, \cite{mpt22}).

\begin{definition}\label{defD}
Let $n\ge2$, $\Omega\subset\mathbb{R}^n$ be a one-sided {\rm CAD}, $p\in(1,\infty)$, and $L:=-\mathrm{div}(A\nabla\cdot)$
be a uniformly elliptic operator.
The \emph{$L^p$ Dirichlet problem $({\rm D}_{p})_L$ is said to be solvable}
if there exists a positive constant $C$ such that, for any $f\in C_{\rm c}(\partial\Omega)$, the solution $u$ to the Dirichlet problem
$$
\begin{cases}
Lu=0 \ \ & \text{in}\ \ \Omega,\\
u=f \ \ & \text{on}\ \ \partial\Omega
\end{cases}
$$
satisfies
$$
\left\|\widetilde{\mathcal{N}}(u)\right\|_{L^{p}(\partial \Omega,\sigma)} \le C \|f\|_{L^{p}(\partial \Omega,\sigma)}.
$$
Moreover, by replacing the boundary condition $\frac{\partial u}{\partial\boldsymbol{\nu}}+\alpha  u=\boldsymbol{F}\cdot\boldsymbol{\nu}$
in \eqref{e1.7} and \eqref{e1.8} with the Dirichlet boundary condition $u=0$
on $\partial\Omega$, and following the definitions used for the solvability
of the $L^p$ Poisson--Robin problem $({\rm PR}_{p})_L$ and the $L^p$ Poisson--Robin-regularity problem $({\rm PRR}_{p})_L$,
we can define the solvability of the corresponding \emph{$L^p$ Poisson--Dirichlet problem} $({\rm PD}_{p})_L$ and
\emph{$L^p$ Poisson--Dirichlet-regularity problem} $({\rm PDR}_{p})_L$ (see \cite[Definition 1.17]{mpt22} for the details).
\end{definition}

\begin{theorem}\label{RandPR}
Let $n\ge2$, $\Omega\subset\mathbb{R}^n$ be a bounded one-sided {\rm CAD}, $p\in(1,\infty)$, $p'$ be the H\"older conjugate of $p$,
$L:=-\mathrm{div}(A\nabla\cdot)$ be a uniformly elliptic operator, and $\alpha$ be the same as in \eqref{e1.3}.
Then the following assertions hold.
\begin{enumerate}[{\rm (i)}]
\item  If $({\rm PRR}_{p})_{L}$ is solvable, then $({\rm R}_{p})_{L}$ is solvable.

\item  If both $({\rm R}_{p})_{L}$ and $({\rm D}_{p'})_{L^*}$ are solvable, then $({\rm PRR}_{p})_{L}$ is solvable.

\item  If $({\rm R}_p)_L$ is solvable and $({\rm LocR}_p)_L$ holds, then there exists a positive constant $\varepsilon$, depending on $p$,
$L$, and $\Omega$, such that $({\rm R}_{q})_L$ is solvable for any $q\in[p,p+\varepsilon)$.

\item  If $({\rm PR}_p)_L$ is solvable, then, for any $q\in[p,\infty)$, $({\rm PR}_{q})_{L}$ is solvable.
\end{enumerate}
\end{theorem}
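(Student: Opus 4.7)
The plan is to dispatch the four parts by combining Theorem \ref{PRandPRR} (which matches $({\rm PR})$ for $L^*$ with $({\rm PRR})$ for $L$), the duality between $\widetilde{\mathcal{N}}$ and $\widetilde{\mathcal{C}}_1$ supplied by Lemma \ref{NdualC}, and the equivalence between $({\rm D}_{p'})_{L^*}$ and $({\rm PDR}_p)_L$ established in \cite{mpt22}, together with standard self-improvement and interpolation machinery. For (i), given $f\in C_{\rm c}(\partial\Omega)$ I would realize the solution $u$ of \eqref{e1.6} as the weak solution of \eqref{e1.8} with $h\equiv 0$ and a boundary-layer vector field $\boldsymbol{F}$ chosen so that $\int_\Omega\boldsymbol{F}\cdot\nabla\phi\,dx=\int_{\partial\Omega}f\phi\,d\sigma$ for every $\phi\in C_{\rm c}^\infty(\mathbb{R}^n)$; concretely, take $\boldsymbol{F}=-\tilde f\,\eta\,\nabla\delta$, where $\tilde f$ is a suitable extension of $f$ to $\Omega$ and $\eta$ is a smooth cutoff localized to a thin boundary collar, so a divergence-theorem computation forces $u$ to solve \eqref{e1.8} with this data. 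A pointwise control of $\widetilde{\mathcal{C}}_1(|\boldsymbol{F}|)$ by a boundary Hardy--Littlewood-type average of $|f|$ then yields $\|\widetilde{\mathcal{C}}_1(|\boldsymbol{F}|)\|_{L^p(\partial\Omega,\sigma)}\lesssim\|f\|_{L^p(\partial\Omega,\sigma)}$, and applying $({\rm PRR}_p)_L$ closes the argument.

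For (ii), I would decompose $u=u_0+v$, where $u_0\in W^{1,2}_0(\Omega)$ solves $Lu_0=h-{\rm div}\,\boldsymbol{F}$ with zero Dirichlet trace. By the equivalence of $({\rm D}_{p'})_{L^*}$ and $({\rm PDR}_p)_L$ in \cite{mpt22}, we obtain $\|\widetilde{\mathcal{N}}(\nabla u_0)\|_{L^p(\partial\Omega,\sigma)}\lesssim\|\widetilde{\mathcal{C}}_1(\delta|h|+|\boldsymbol{F}|)\|_{L^p(\partial\Omega,\sigma)}$. Subtracting the weak formulations for $u$ and $u_0$ shows that $v=u-u_0$ is $L$-harmonic in $\Omega$ and solves a homogeneous Robin problem with some effective datum $\tilde f$ defined by the bulk pairing $\phi\mapsto\int_\Omega(h\phi+\boldsymbol{F}\cdot\nabla\phi-A\nabla u_0\cdot\nabla\phi)\,dx$; estimating $\|\tilde f\|_{L^p(\partial\Omega,\sigma)}$ by duality against $\phi=$ the $L^{p'}$ solution of the adjoint Dirichlet problem with arbitrary $L^{p'}(\partial\Omega,\sigma)$-data $g$ and using Lemma \ref{NdualC} to absorb the resulting bulk integrals against $\widetilde{\mathcal{C}}_1(\delta|h|+|\boldsymbol{F}|)$ gives $\|\tilde f\|_{L^p(\partial\Omega,\sigma)}\lesssim\|\widetilde{\mathcal{C}}_1(\delta|h|+|\boldsymbol{F}|)\|_{L^p(\partial\Omega,\sigma)}$. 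Applying $({\rm R}_p)_L$ to $v$ and the triangle inequality then yields $({\rm PRR}_p)_L$. The identification of $\tilde f$ as a bona fide $L^p(\partial\Omega,\sigma)$ datum with this control, which relies on carefully tracking how the conormal trace is absorbed into $u_0$ via the PDR estimate, is the main technical step.

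Part (iii) is a boundary Gehring-type self-improvement: starting from $u$ on a surface ball $B(x_0,r)$, $({\rm LocR}_p)_L$ supplies the requisite reverse-H\"older data, and combining it with $({\rm R}_p)_L$, Caccioppoli's inequality, and the boundary Poincar\'e inequality tailored to the Robin condition yields a genuine reverse-H\"older inequality for $\widetilde{\mathcal{N}}(\nabla u)$ on boundary balls, whence Gehring's lemma extrapolates $({\rm R}_p)_L$ to $({\rm R}_q)_L$ for $q\in[p,p+\varepsilon)$ with $\varepsilon$ depending on $p$, $L$, and $\Omega$. For (iv), Theorem \ref{PRandPRR} recasts $({\rm PR}_q)_L$ as $({\rm PRR}_{q'})_{L^*}$, and equivalently I would interpolate the $({\rm PR}_p)_L$ estimate directly with the $L^\infty\to L^\infty$ endpoint $\|\widetilde{\mathcal{N}}(u)\|_{L^\infty(\partial\Omega,\sigma)}\lesssim\|\widetilde{\mathcal{C}}_1(\delta\boldsymbol{F})\|_{L^\infty(\partial\Omega,\sigma)}$, which holds because $\|\widetilde{\mathcal{C}}_1(\delta\boldsymbol{F})\|_{L^\infty}<\infty$ is exactly a Carleson-measure condition on $|\boldsymbol{F}|^2\,\delta\,dy$, and a Moser-type iteration combined with the weak maximum principle adapted to the Robin problem \eqref{e1.7} controls $\|u\|_{L^\infty(\Omega)}$ by the Carleson constant. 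Marcinkiewicz interpolation (noting that $\boldsymbol{F}\mapsto u$ is linear while $\widetilde{\mathcal{N}}$ and $\widetilde{\mathcal{C}}_1$ are sublinear) then delivers $({\rm PR}_q)_L$ for every $q\in[p,\infty)$; the $L^\infty$-endpoint on rough CAD geometry is the most delicate ingredient here, but reduces to a classical Carleson-data argument once the Robin boundary condition is correctly handled.
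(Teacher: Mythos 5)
There are genuine gaps, the most serious in parts (i) and (iv). For (i), your reduction of the homogeneous Robin problem to $({\rm PRR}_p)_L$ via a boundary-layer field cannot work as stated. First, $\boldsymbol{F}=-\tilde f\,\eta\,\nabla\delta$ is not divergence free, so the identity $\int_\Omega\boldsymbol{F}\cdot\nabla\phi\,dx=\int_{\partial\Omega}f\phi\,d\sigma$ fails: the divergence theorem produces an extra interior datum $h=\mathrm{div}\,\boldsymbol{F}$ involving $\nabla\tilde f$ and $\Delta\delta$, and $\widetilde{\mathcal C}_1(\delta|h|)$ is not controlled by $\|f\|_{L^p(\partial\Omega,\sigma)}$. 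Second, and decisively, for any field of size $\sim|f|$ that does not decay at $\partial\Omega$, the Carleson functional $\widetilde{\mathcal C}_1(|\boldsymbol{F}|)$ diverges, because the weight $dy/\delta(y)$ is not integrable near the boundary; so the claimed bound $\|\widetilde{\mathcal C}_1(|\boldsymbol{F}|)\|_{L^p}\lesssim\|f\|_{L^p}$ is false (also, Definition \ref{defPR}/\ref{defPRR} requires $\boldsymbol{F}\in L^\infty_{\rm c}(\Omega;\mathbb{R}^n)$, which a collar-supported field is not). The paper's proof of (i) is instead a short duality argument: by Theorem \ref{PRandPRR} pass to $({\rm PR}_{p'})_{L^*}$, choose $\boldsymbol{F}$ dual to $\nabla u$ via Lemma \ref{NdualC}, and use $\int_\Omega\nabla u\cdot\boldsymbol{F}\,dx=\int_{\partial\Omega}fv\,d\sigma$ with $v$ the adjoint Poisson--Robin solution. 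For (iv), your $L^\infty$ endpoint $\|\widetilde{\mathcal N}(u)\|_{L^\infty}\lesssim\|\widetilde{\mathcal C}_1(\delta\boldsymbol{F})\|_{L^\infty}$ is unproved and in this generality false: Carleson-measure interior data gives BMO-type, not $L^\infty$, control for Neumann/Robin problems. For instance, stacking Whitney boxes $Q_k$ of side $2^{-k}$ above a boundary point with $|\boldsymbol{F}|\sim 2^{k}\sim\delta^{-1}$ on $Q_k$ keeps $\|\widetilde{\mathcal C}_1(\delta\boldsymbol{F})\|_{L^\infty}$ uniformly bounded, while the representation $u(x)=\int_\Omega\nabla_yG_R(y,x)\cdot\boldsymbol{F}(y)\,dy$ with $|\nabla_yG_R|\sim|x-y|^{1-n}$ (no boundary vanishing in the Robin/Neumann case) contributes an amount $\sim1$ per scale, so $u$ grows like $\log(1/\delta)$. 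The paper proves (iv) by a good-$\lambda$ argument against the Hardy--Littlewood maximal function (Lemma \ref{PRselfimprove}); no endpoint is used or needed.

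For (ii) your decomposition $u=u_0+v$ is the same as the paper's, but the stated mechanism for bounding the effective Robin datum does not close: testing against the adjoint Dirichlet solution $\Phi_g$ and "absorbing the bulk integrals via Lemma \ref{NdualC}" would require $\|\widetilde{\mathcal C}_1(\delta\nabla\Phi_g)\|_{L^{p'}}\lesssim\|g\|_{L^{p'}}$, an $L^1$-type Carleson gradient bound that does not follow from solvability of $({\rm D}_{p'})_{L^*}$ (square-function bounds control the $L^2$-Carleson object, not $\widetilde{\mathcal C}_1$). The term $\int_\Omega A\nabla u_0\cdot\nabla\Phi_g\,dx$ in fact vanishes formally because $u_0\in W^{1,2}_0(\Omega)$ and $L^*\Phi_g=0$, but justifying that $\Phi_g$ is an admissible test function for rough $g$ needs an argument you do not supply; the paper sidesteps all of this by using the Varopoulos extension of \cite{mz23}, which has exactly the two bounds \eqref{e4.11} needed, and by first reducing to $\boldsymbol{F}=\boldsymbol{0}$ via Theorem \ref{PRandPRR}. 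Part (iii) is the least problematic: your reverse-H\"older/Gehring sketch is in the same circle of ideas as the paper's Shen-type good-$\lambda$ argument (Lemmas \ref{selfimprove} and \ref{Rselfimprove}), though it omits the needed decomposition of $u$ on each boundary ball into a solution with truncated data plus a local solution with vanishing Robin data, and the conversion of solid averages to boundary averages of $\widetilde{\mathcal N}$; also beware that Poincar\'e-type arguments are exactly what the Robin setting does not permit in general, which is why $({\rm LocR}_p)_L$ carries the zeroth-order term.
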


We prove Theorem \ref{RandPR} by first establishing the equivalence of
the solvability of $({\rm R}_p)_L$ and $({\rm \widetilde{R}}_p)_L$
and using the geometric properties of CADs and a good-$\lambda$ argument.

\begin{remark}
Let $\Omega$, $L:=-\mathrm{div}(A\nabla\cdot)$, and the function $\alpha$ be the same as in Theorem \ref{RandPR} and $p\in(1,\infty)$.
\begin{itemize}
\item[{\rm(i)}] From Theorems \ref{PRandPRR} and \ref{RandPR},
we deduce the following relationship
among the problems $({\rm PRR}_{p})_{L}$, $({\rm PR}_{p'})_{L^*}$, and $({\rm R}_{p})_{L}$:
\begin{equation}\label{e1.11}
({\rm R}_{p})_{L}+({\rm D}_{p'})_{L^*}\Rightarrow({\rm PRR}_{p})_{L}\Leftrightarrow
({\rm PR}_{p'})_{L^*}\Rightarrow
({\rm R}_{p})_{L}.
\end{equation}
Recall that, in the Dirichlet case, Mourgoglou et al. \cite[Theorem 1.22]{mpt22} established the following equivalent relationship
among the problems $({\rm D}_{p})_{L}$, $({\rm PD}_{p})_{L}$, and $({\rm PDR}_{p'})_{L^\ast}$:
\begin{equation}\label{e1.12}
({\rm D}_{p})_{L}\Leftrightarrow({\rm PD}_{p})_{L}\Leftrightarrow({\rm PDR}_{p'})_{L^\ast};
\end{equation}
and, in the Neumann case, Feneuil and Li \cite[Propositions 1.17 and 1.18]{fl24} obtained the following relationship
among the problems $({\rm N}_{p})_{L}$, $({\rm PN}_{p'})_{L^*}$, and $({\rm PNR}_{p})_{L}$:
\begin{equation}\label{e1.13}
({\rm N}_{p})_{L}+({\rm D}_{p'})_{L^*}\Rightarrow({\rm PNR}_{p})_{L}\Leftrightarrow
({\rm PN}_{p'})_{L^*}\Rightarrow({\rm N}_{p})_{L}.
\end{equation}
Thus, the conclusion \eqref{e1.11} is a natural analogue of \eqref{e1.12}
and \eqref{e1.13} in the Robin case
and, moreover, \eqref{e1.11} covers \eqref{e1.13} when the function $\alpha\equiv0$.

\item[{\rm(ii)}] Feneuil and Li \cite[Theorem 1.19]{fl24}
proved the following relationship
among the problems $({\rm PN}_{p'})_{L^*}$, $({\rm PNR}_{p})_{L}$, $({\rm wPN}_{p'})_{L^\ast}$, and $({\rm wPNR}_{p})_{L}$
and the local property $({\rm LocN}_{p})_{L}$:
\begin{equation}\label{e1.14}
({\rm PN}_{p'})_{L^*}\Leftrightarrow({\rm PNR}_{p})_{L}\Rightarrow({\rm LocN}_{p})_{L}\Leftrightarrow({\rm wPN}_{p'})_{L^\ast}
\Leftrightarrow({\rm wPNR}_{p})_{L}.
\end{equation}
Assume further that $\alpha$ satisfies the assumption of Theorem \ref{StrongLocWeak}(ii).
Then Theorems \ref{PRandPRR} and \ref{StrongLocWeak} yield that, in the Robin case,
\begin{equation*}
({\rm PR}_{p'})_{L^*}\Leftrightarrow({\rm PRR}_{p})_{L}\Rightarrow({\rm LocR}_{p})_{L}
\Rightarrow({\rm wPR}_{p'})_{L^\ast}\Leftrightarrow({\rm wPRR}_{p})_{L},
\end{equation*}
which is weaker than the corresponding conclusion \eqref{e1.14}. Indeed, the (local) Neumann problem has the property that,
if $u$ is a weak solution to the (local) Neumann problem, then,
for any constant $C$, $u+C$ is still its  weak solution.
This property guarantees that Poincar\'e's inequality is always available to deal with the (local) Neumann problem.
Using this, Feneuil and Li \cite[Theorem 1.19]{fl24} proved the equivalence
between the problem $({\rm wPN}_{p'})_{L^\ast}$
and the local property $({\rm LocN}_{p})_{L}$. However, when dealing
with the (local) Robin problem, since Poincar\'e's inequality is
not always valid, we can not prove the equivalence between $({\rm wRN}_{p'})_{L^\ast}$
and $({\rm LocR}_{p})_{L}$.

\item[{\rm(iii)}] We prove Theorems \ref{PRandPRR},
\ref{StrongLocWeak}, and \ref{RandPR}
by applying some methods and techniques from Mourgoglou et al. \cite{mpt22}
and Feneuil and Li \cite{fl24} and also using
several properties of weak solutions to the Robin problem,
such as Caccioppoli's inequality, the Moser type estimate, and
the boundary H\"older regularity estimate for weak solutions.
Compared with \cite{mpt22} and \cite{fl24}, the main new ingredients
appearing in the proofs of the aforementioned theorems are
establishing the equivalence between the Robin problems $(\mathrm{R}_p)_{L}$ and $(\widetilde{\mathrm{R}}_{p})_{L}$
and finding the suitable variant of the local property $({\rm LocR}_{p})_{L}$.
\end{itemize}
\end{remark}

As a consequence of all the results above, we
obtain an analogue of the extrapolation theorem for the $L^p$
Neumann problem obtained by Feneuil and Li \cite{fl24} in the Robin case.
Indeed, as mentioned at the beginning, Feneuil and Li \cite{fl24} improved the
extrapolation theorem for the $L^p$ Neumann problem established by Kenig and Pipher \cite{kp93} and Hofmann and Sparrius
\cite{hs25} by showing that, for any $p\in(1,\infty)$, there exists a positive constant $\varepsilon$ such that, for any given
$q\in(1,p+\varepsilon)$,
\begin{equation}\label{e1.15}
({\rm D}_{p'})_{L^*}+({\rm N}_p)_{L}\Rightarrow({\rm N}_q)_L.
\end{equation}
As an immediate consequence of Theorems \ref{PRandPRR}, \ref{StrongLocWeak}, and \ref{RandPR}, we obtain the following conclusion.

\begin{corollary}\label{c1.1}
Let $n\ge2$, $\Omega\subset\mathbb{R}^n$ be a bounded one-sided {\rm CAD}, $p\in(1,\infty)$, $p'$ be the H\"older conjugate of $p$,
$L:=-\mathrm{div}(A\nabla\cdot)$ be a uniformly elliptic operator, and $\alpha$ satisfy \eqref{e1.3} and $\alpha\in L^p(\partial\Omega,\sigma)$ with $\|\alpha \|_{L^p(B(x,r)\cap\partial\Omega,\sigma)}
\le Cr^{(n-1)/p}$ for any $x\in\partial\Omega$ and $r\in(0,\mathrm{diam\,}(\Omega))$, where $C$ is a positive constant independent of $x$ and $r$.
Then there exists a positive constant $\varepsilon$, depending on $p$, $L$, and $\Omega$, such that, if both $({\rm D}_{p'})_{L^*}$ and
$({\rm R}_p)_{L}$ are solvable, then $({\rm R}_q)_{L}$ is solvable for any $q\in(1,p+\varepsilon)$.
\end{corollary}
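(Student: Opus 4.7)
The plan is to combine the three main theorems above in a two-step extrapolation, handling the ranges $q\in(1,p]$ and $q\in[p,p+\varepsilon)$ by separate chains of implications. The common starting point is that, from the hypotheses $({\rm D}_{p'})_{L^*}$ and $({\rm R}_p)_L$ solvable, Theorem \ref{RandPR}(ii) gives solvability of $({\rm PRR}_p)_L$, and then Theorem \ref{PRandPRR}(i) yields the equivalent solvability of $({\rm PR}_{p'})_{L^*}$. All the remaining work is to propagate this pair of Poisson-type solvabilities outward in the appropriate direction.

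For the downward range $q\in(1,p]$, I would apply Theorem \ref{RandPR}(iv) to the adjoint operator $L^*$ at the index $p'$: this self-improvement propagates solvability of $({\rm PR}_{p'})_{L^*}$ to $({\rm PR}_{r})_{L^*}$ for every $r\in[p',\infty)$. Writing $r=q'$ with $q\in(1,p]$, Theorem \ref{PRandPRR}(i) converts each such assertion back into solvability of $({\rm PRR}_q)_L$, and Theorem \ref{RandPR}(i) then delivers $({\rm R}_q)_L$. This covers the full interval $(1,p]$ without invoking the additional $L^p$ bound on $\alpha$, using only the structural assumption \eqref{e1.3} (which is symmetric under $L\leftrightarrow L^*$, so the appeals to Theorems \ref{PRandPRR} and \ref{RandPR} for $L^*$ are legitimate).

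For the upward range $q\in[p,p+\varepsilon)$, the additional hypothesis on $\alpha$ becomes essential: it is exactly the assumption required in Theorem \ref{StrongLocWeak}(i), which promotes the already established solvability of $({\rm PR}_{p'})_{L^*}$ to the local property $({\rm LocR}_p)_L$. With $({\rm R}_p)_L$ solvable and $({\rm LocR}_p)_L$ in hand, Theorem \ref{RandPR}(iii) produces a constant $\varepsilon>0$, depending on $p$, $L$, and $\Omega$, and yields $({\rm R}_q)_L$ for every $q\in[p,p+\varepsilon)$. Concatenating with the downward range gives $({\rm R}_q)_L$ on all of $(1,p+\varepsilon)$, as desired.

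There is no substantial technical obstacle here, since the corollary is a packaging of the preceding theorems; the only point that deserves care is the symmetry check when applying the $L$-statements of Theorems \ref{PRandPRR} and \ref{RandPR}(iv) to $L^*$, and the verification that the $\alpha$-hypothesis of Corollary \ref{c1.1} is precisely the one demanded by Theorem \ref{StrongLocWeak}(i) at the index $p$ (it is used only once, and only at $p$, in the upward step).
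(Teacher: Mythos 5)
Your proposal is correct and follows essentially the same route the paper intends: the corollary is stated there as an immediate consequence of Theorems \ref{PRandPRR}, \ref{StrongLocWeak}, and \ref{RandPR}, and your two-step chain (RandPR(ii) and PRandPRR(i) to get $({\rm PR}_{p'})_{L^*}$, then RandPR(iv) for $L^*$ plus PRandPRR(i) and RandPR(i) for $q\in(1,p]$, and StrongLocWeak(i) plus RandPR(iii) for $q\in[p,p+\varepsilon)$) is exactly that packaging. Your remarks on the $L\leftrightarrow L^*$ symmetry of \eqref{e1.3} and on the single use of the extra $\alpha$-hypothesis at the index $p$ are also accurate.
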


Corollary \ref{c1.1} give the extrapolation theorem for the Robin problem
$({\rm R}_p)_{L}$ under the assumption that $({\rm D}_{p'})_{L^*}$ is solvable,
which can be seen as a natural analogue of \eqref{e1.15} in the Robin case.

The remainder of this article is organized as follows.

In Section \ref{s2}, we present several geometric assumptions on domains
considered in this article, and recall some properties of both weak solutions
to the Robin problem \eqref{e1.4} and
Green's function associated with the Robin problem.

In Section \ref{s3}, we give the proofs of Theorems \ref{PRandPRR} and \ref{StrongLocWeak}, while, in Section \ref{s4}, we first prove the
equivalence between the solvability of the Robin problems
$({\rm R}_p)_L$ and $({\rm \widetilde{R}}_p)_L$ (see Proposition \ref{RobinEquiv}) by applying the geometric assumption
of CAD, and then we show Theorem \ref{RandPR} by using the equivalence of
$({\rm R}_p)_L$ and $({\rm \widetilde{R}}_p)_L$ and a good-$\lambda$ argument.

Furthermore, in Section \ref{s5}, we apply Theorems \ref{PRandPRR}, \ref{StrongLocWeak}, and \ref{RandPR}
to obtain the optimal range of the solvability of the $L^p$ Poisson--Robin problem
and the $L^p$ Poisson--Robin-regularity problem for the Laplace operator on bounded Lipschitz domains or $C^1$ domains.

We end this introduction by making some conventions on symbols. Throughout this article,
we \emph{always} denote by $C$ a positive constant which is independent of the main parameters,
but it may vary from line to line. We also use the \emph{symbol} $C_{(\alpha,\beta,\ldots)}$ or
$c_{(\alpha,\beta,\ldots)}$ to denote a positive constant depending on the indicated parameters $\alpha,\beta,\ldots.$
The \emph{symbol} $f\lesssim g$ means that $f\le Cg$. If $f\lesssim g$
and $g\lesssim f$, then we write $f\sim g$. If $f\le Cg$ and $g=h$ or $g\le h$, we then write $f\lesssim g=h$
or $f\lesssim g\le h$. Assume that $\mathbb{N}:=\{1,2,\ldots\}$. For any $p\in[1,\infty]$, the \emph{symbol} $p'$ denotes its
\emph{H\"older conjugate}, that is, $1/p'+1/p=1$.
For any subset $E$ of $\mathbb{R}^n$, the \emph{symbol} $\mathbf{1}_E$ denotes its \emph{characteristic function}
and $E^\complement$ its \emph{complement} in $\mathbb{R}^n$. For any $x\in\mathbb{R}^n$ and any nonempty sets $E_1,E_2\subset
\mathbb{R}^n$, let
$$
d(x,E_1):=\inf\{|x-y|:y\in E_1\},\  \ d(E_1,E_2):=\inf\{|y-z|: y\in E_1,z\in E_2\},$$
and
$$\mathrm{diam\,}(E_1):=\sup\{|y-z|: y,z\in E_1\}.
$$
For any given domain $\Omega\subset\mathbb{R}^n$ and any $x\in\Omega$, let $\delta(x):=\mathrm{dist\,}(x,\partial\Omega)$.
For any $x\in \mathbb{R}^{n}$ and $r\in(0,\infty)$, we use $B$ or $B(x,r)$
to denote the ball centered at $x$ with radius $r$, that is,
$B:=B(x,r):=\{y\in\mathbb{R}^{n}: |x-y|<r\};$ also, for any $\alpha\in(0,\infty)$,
$\alpha B$ denotes the ball with the same center as $B$, but with
$\alpha$ times radius of $B$. For any measurable set $E\subset\mathbb{R}^n$ and $f\in L^1(E)$, we denote the integral
$\int_{E}|f(x)|\,dx$ simply by $\int_{E}|f|\,dx$ and, when $|E|\in(0,\infty)$, we always use the following symbol
$$
\fint_Ef\,dx:=\frac{1}{|E|}\int_{E}f(x)\,dx.
$$
Finally, in all proofs we
consistently retain the symbols
introduced in the original theorem (or related statement).

\section{Preliminaries\label{s2}}

In this section, we first introduce the geometric assumptions on domains considered in this article, and then recall some properties
of weak solutions to the Robin problem \eqref{e1.4} and
Green's function associated with the Robin problem.

\begin{definition}\label{CSandHC}
Let $n\ge2$ and $\Omega\subset\mathbb{R}^n$ be a domain.
\begin{enumerate}[{\rm (a)}]
\item The domain $\Omega$ is said to satisfy the \emph{$c$-corkscrew condition}
for some $c\in(0,1)$ if, for any $x\in\partial\Omega$
and $r\in(0,\mathrm{diam\,}(\Omega))$, there exists $y_x\in B(x,r)\cap\Omega$ such that $B(y_x,cr)\subset B(x,r)\cap\Omega$.
The point $y_x\in\Omega$ is called a \emph{corkscrew point} related to $B(x,r)$.

\item Let $N\in\mathbb{N}$ and $M\in[1,\infty)$. Then $x,y\in\Omega$ are
said to be linked by an \emph{$(M,N)$-Harnack chain}
if there exists a chain of open balls $B_1,\ldots,B_N\subset\Omega$ such that
$x\in B_1$, $y\in B_N$, $B_k\cap B_{k+1}\neq\emptyset$ for any $k\in\{1,\ldots,N-1\}$, and $M^{-1}\mathrm{diam\,}(B_k)\le\mathrm{dist\,}(B_k,\partial\Omega)\le M\mathrm{diam\,}(B_k)$
for any $k\in\{1,\ldots,N\}$.

\item The domain $\Omega$ is said to satisfy the \emph{Harnack chain
condition} if there exists a uniform constant $M\in[1,\infty)$
such that, for any $x,y\in\Omega$, there exists an $(M,N)$-Harnack chain connecting $x$ and $y$, with $N\in\mathbb{N}$
depending only on $M$ and on the ratio $|x-y|/\min\{\delta(x),\delta(y)\}$.
\end{enumerate}
\end{definition}

It is well known that the $L^p$ norms of the modified non-tangential maximal function, the Lusin area function, and the averaged
Carleson functional are mutually equivalent under the change of the aperture $a$ or the radius $c$ (see, for example, \cite{cms85,fl24,mpt22}),
and the equivalence between the area function $\widetilde{\mathcal{A}}_1$ and the Carleson functional $\widetilde{\mathcal{C}}_1$
can be found in \cite[Theorem 3]{cms85}.

\begin{lemma}\label{TentEqui}
Let $n\ge2$, $\Omega\subset\mathbb{R}^n$ be a bounded one-sided {\rm CAD}, $a\in(1,\infty)$, $c\in(0,1)$, and $p\in(0,\infty]$. Then, for any $u\in L_{\rm c}^{\infty}(\Omega)$,
$$
\left\|\widetilde{\mathcal{N}}^{a,c}(u)\right\|_{L^p(\partial\Omega,\sigma)}\sim
\left\|\widetilde{\mathcal{N}}(u)\right\|_{L^p(\partial\Omega,\sigma)},
$$
$$
\left\|\widetilde{\mathcal{A}}_1^{a,c}(u)\right\|_{L^p(\partial\Omega,\sigma)}\sim
\left\|\widetilde{\mathcal{A}}_1(u)\right\|_{L^p(\partial\Omega,\sigma)},
$$
and
$$
\left\|\widetilde{\mathcal{C}}_1^{c}(u)\right\|_{L^p(\partial\Omega,\sigma)}\sim
\left\|\widetilde{\mathcal{C}}_1(u)\right\|_{L^p(\partial\Omega,\sigma)},
$$
where the positive equivalence constants depend only on $\Omega$, $a$, $c$, and $p$. Moreover, if $p\in(1,\infty)$,
then, for any $u\in L_{\rm c}^{\infty}(\Omega)$,
$$
\left\|\widetilde{\mathcal{A}}_1(u)\right\|_{L^p(\partial\Omega,\sigma)}\sim
\left\|\widetilde{\mathcal{C}}_1(u)\right\|_{L^p(\partial\Omega,\sigma)},
$$
where the positive equivalence constants depend only on $\Omega$ and $p$.
\end{lemma}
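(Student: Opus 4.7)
The plan is to prove Lemma \ref{TentEqui} by adapting the classical tent-space techniques from Coifman--Meyer--Stein \cite{cms85} to the boundary setting of a bounded one-sided CAD, exploiting crucially the $(n-1)$-Ahlfors regularity of $\partial\Omega$.

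First I would address the change-of-aperture (and change-of-radius) equivalence for $\widetilde{\mathcal{N}}^{a,c}$. One direction is immediate from the monotonicity of cones: $\gamma_a(x)\subset\gamma_{a'}(x)$ whenever $a\le a'$. For the converse, given $y\in\gamma_{a'}(x)$ with $a'>a$, a Whitney-type geometric observation yields a set $E\subset\partial\Omega\cap B(x,C\delta(y))$ with $\sigma(E)\gtrsim\delta(y)^{n-1}$ such that $y\in\gamma_a(x')$ for every $x'\in E$. This produces a pointwise bound of the form $\widetilde{\mathcal{N}}^{a',c}(u)(x)\lesssim\{M_\sigma([\widetilde{\mathcal{N}}^{a,c}(u)]^s)(x)\}^{1/s}$ for any $s\in(0,\infty)$, where $M_\sigma$ is the Hardy--Littlewood maximal operator on the space of homogeneous type $(\partial\Omega,\sigma)$. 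Choosing $s<p$ and invoking the $L^{p/s}$-boundedness of $M_\sigma$ yields the $L^p$ equivalence for every $p\in(0,\infty]$. An analogous argument, combined with the Whitney-ball comparability associated with varying $c\in(0,1)$, handles the dependence on $c$.

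Next I would handle the area function and averaged Carleson functional. For $\widetilde{\mathcal{A}}_1^{a,c}$, an application of Fubini together with the Ahlfors-regular identity $\sigma(\{x\in\partial\Omega:z\in\gamma_a(x)\})\sim\delta(z)^{n-1}$ for each $z\in\Omega$ reduces comparisons across different $(a,c)$ to the same pointwise Whitney argument; the maximal-function reasoning above then closes the loop. For $\widetilde{\mathcal{C}}_1^{c}$, the change of $c$ follows by a direct comparison between averages over $B(y,c\delta(y))$ and $B(y,c'\delta(y))$, using that any ball of one radius can be covered by a bounded number of balls of the other.

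The main obstacle is the equivalence $\|\widetilde{\mathcal{A}}_1(u)\|_{L^p(\partial\Omega,\sigma)}\sim\|\widetilde{\mathcal{C}}_1(u)\|_{L^p(\partial\Omega,\sigma)}$ for $p\in(1,\infty)$, which is the tent-space theorem of \cite[Theorem 3]{cms85}. The strategy I would follow is a good-$\lambda$ inequality: for suitable $\lambda>0$ and sufficiently small $\gamma>0$, a sawtooth decomposition of the level sets on $(\partial\Omega,\sigma)$ yields $\sigma(\{\widetilde{\mathcal{A}}_1(u)>2\lambda,\ \widetilde{\mathcal{C}}_1(u)\le\gamma\lambda\})\lesssim\gamma^{2}\,\sigma(\{\widetilde{\mathcal{A}}_1(u)>\lambda\})$ and, symmetrically, the reverse bound. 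Integrating these inequalities against $p\lambda^{p-1}\,d\lambda$ then delivers the two-sided $L^p$ bound. The restriction $p>1$ enters at the layer-cake step where absorption of the $L^p$ norm of $\widetilde{\mathcal{A}}_1(u)$ is required; the Ahlfors regularity of $\partial\Omega$ and the aperture flexibility established in the previous steps supply precisely the geometric ingredients needed to carry out the sawtooth construction in this setting.
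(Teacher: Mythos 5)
The paper does not actually prove this lemma: it is quoted as known, with references to \cite{cms85,fl24,mpt22}, so there is no in-paper argument to match, and your plan is indeed the classical Coifman--Meyer--Stein route that those references follow (change of aperture via Ahlfors regularity and the Hardy--Littlewood maximal operator on $(\partial\Omega,\sigma)$, then the $\widetilde{\mathcal{A}}_1$ versus $\widetilde{\mathcal{C}}_1$ comparison for $p>1$). Your treatment of $\widetilde{\mathcal{N}}^{a,c}$ is correct and standard: the set $E\subset B(\hat y,(a-1)\delta(y))\cap\partial\Omega$ with $\sigma(E)\gtrsim[\delta(y)]^{n-1}$ does give the pointwise bound by $\{\mathcal{M}_\sigma([\widetilde{\mathcal{N}}^{a,c}u]^s)\}^{1/s}$, and the change of $c$ follows by covering Whitney balls, using the already-established aperture flexibility.

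Two steps, however, would not work as you describe them. First, for $\widetilde{\mathcal{A}}_1^{a,c}$ the ``same pointwise Whitney argument'' does not transfer: $\widetilde{\mathcal{A}}_1$ is an integral over the cone, not a supremum, and inserting $1\lesssim\sigma(E_y)/[\delta(y)]^{n-1}$ and applying Fubini produces a kernel bound of the form $\int_{\partial\Omega}|x-x'|^{1-n}\,[\text{truncated }\widetilde{\mathcal{A}}_1^{a,c}u(x')]\,d\sigma(x')$, not a domination by $\mathcal{M}_\sigma(\widetilde{\mathcal{A}}_1^{a,c}u)$; the correct route (as in the cited references) is a level-set/good-$\lambda$ comparison of distribution functions obtained from a Whitney decomposition, which then gives all $p\in(0,\infty]$. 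Second, the symmetric good-$\lambda$ inequalities you propose for $\widetilde{\mathcal{A}}_1$ versus $\widetilde{\mathcal{C}}_1$ cannot both hold: if they did, the layer-cake argument would give $\|\widetilde{\mathcal{C}}_1(u)\|_{L^p}\lesssim\|\widetilde{\mathcal{A}}_1(u)\|_{L^p}$ for every $p\in(0,\infty)$ with constants independent of $u$, which is false (test $u=\mathbf{1}_{B(y_0,c\delta(y_0))}$ with $\delta(y_0)\to0$; for small $p$ the ratio blows up). In the classical proof the two directions are asymmetric: one direction is the elementary pointwise bound $\widetilde{\mathcal{C}}_1(u)\lesssim\mathcal{M}_\sigma(\widetilde{\mathcal{A}}_1(u))$ obtained by Fubini over the Carleson region, and this is precisely where $p>1$ enters (boundedness of $\mathcal{M}_\sigma$), not the absorption step in the layer-cake integration; the other direction is proved by a stopping-time/good-$\lambda$ argument with truncated cones. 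So your outline needs these two mechanisms replaced by the standard ones before it is a proof; as the paper simply cites \cite[Theorem 3]{cms85} and its extensions to Ahlfors-regular boundaries, the repaired argument is exactly what is found there.
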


Let $p\in[1,\infty]$. Based on the conclusions in Lemma \ref{TentEqui}, we define \emph{tent spaces} $\widetilde{T}^p_{\infty}(\Omega)$
and $\widetilde{T}^{p}_1(\Omega)$, respectively, by setting
$$
\widetilde{T}^p_{\infty}(\Omega):=\left\{u\in L^2_{\mathrm{loc}}(\Omega): \|u\|_{\widetilde{T}^p_{\infty}(\Omega)}:=\left\|\widetilde{\mathcal{N}}
(u)\right\|_{L^p(\partial\Omega,\sigma)}<\infty\right\}
$$
and
$$
\widetilde{T}^p_{1}(\Omega):=\left\{u\in L^2_{\mathrm{loc}}(\Omega):\|u\|_{\widetilde{T}^p_{1}(\Omega)}:
=\left\|\widetilde{\mathcal{C}}_1(u)\right\|_{L^p(\partial\Omega,\sigma)}<\infty\right\}.
$$

Now, we present the duality between the modified non-tangential maximal function $\widetilde{\mathcal{N}}$ and the averaged Carleson
functional $\widetilde{\mathcal{C}}_1$ and a conclusion on the density of the set $L^\infty_{\rm c}(\Omega)$ in tent spaces $\widetilde{T}_1^p(\Omega)$
(see, for example, \cite{fl24,mpt22}).

\begin{lemma}\label{NdualC}
Let $n\ge2$, $\Omega\subset\mathbb{R}^n$ be a bounded one-sided {\rm CAD}, $p\in(1,\infty)$, and $p'$ be the H\"older conjugate of $p$.
Then, for any $u\in\widetilde{T}^p_{\infty}(\Omega)$ and $F\in\widetilde{T}^{p'}_1(\Omega)$,
\begin{equation}\label{e2.1}
\left|\int_{\Omega}uF\,dx\right|\lesssim\left\|\widetilde{\mathcal{N}}(u)
\right\|_{L^p(\partial\Omega,\sigma)}
\left\|\widetilde{\mathcal{C}}_1(\delta F)\right\|_{L^{p'}(\partial\Omega,\sigma)},
\end{equation}
with the implicit positive constant depending only on $p$ and $\Omega$.
Moreover, for any $u\in\widetilde{T}^p_{\infty}(\Omega)$,
\begin{equation}\label{e2.2}
\left\|\widetilde{\mathcal{N}}(u)\right\|_{L^p(\partial\Omega,\sigma)}
\lesssim\sup_{F:\,\|\widetilde{\mathcal{C}}_1(\delta F)\|_{L^{p'}(\partial\Omega,\sigma)}=1}\left|\int_{\Omega}uF\,dx\right|
\end{equation}
and, for any $u\in\widetilde{T}^{p'}_1(\Omega)$,
\begin{equation*}
\left\|\widetilde{\mathcal{C}}_1(u)\right\|_{L^{p'}(\partial\Omega,\sigma)}
\lesssim\sup_{F:\,\|\widetilde{\mathcal{N}}(\delta F)\|_{L^p(\partial\Omega,\sigma)}=1}\left|\int_{\Omega}uF\,dx\right|,
\end{equation*}
where the implicit positive constants depend only on $p$ and $\Omega$.
\end{lemma}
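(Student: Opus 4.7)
The plan is to prove \eqref{e2.1} first via a Coifman--Meyer--Stein style tent space duality argument, and then to obtain \eqref{e2.2} (and the third inequality) by dualizing against $L^{p'}(\partial\Omega,\sigma)$ (resp. $L^{p}(\partial\Omega,\sigma)$). Setting $G:=\delta F$ recasts the right side of \eqref{e2.1} as $\|\widetilde{\mathcal{N}}(u)\|_{L^p(\partial\Omega,\sigma)}\|\widetilde{\mathcal{C}}_1(G)\|_{L^{p'}(\partial\Omega,\sigma)}$, and Lemma \ref{TentEqui} allows one to replace the Carleson functional by the equivalent Lusin area functional $\widetilde{\mathcal{A}}_1(G)$ in $L^{p'}$.

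For \eqref{e2.1}, I would proceed in three steps. First, pass from pointwise products to $L^{2}$ averages over Whitney balls: Fubini together with the comparability $\delta(y)\sim\delta(z)$ for $z\in B(y,c\delta(y))$ gives
$$
\int_{\Omega}\frac{|u(z)G(z)|}{\delta(z)}\,dz \lesssim \int_{\Omega}\fint_{B(y,c\delta(y))}\frac{|u(z)G(z)|}{\delta(z)}\,dz\,dy \lesssim \int_{\Omega}\frac{\widetilde{u}(y)\widetilde{G}(y)}{\delta(y)}\,dy,
$$
where $\widetilde{u}(y):=[\fint_{B(y,c\delta(y))}|u|^{2}\,dz]^{1/2}$ and $\widetilde{G}$ is defined analogously, and the last step is the Cauchy--Schwarz inequality on each Whitney ball. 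Second, the $(n-1)$-Ahlfors regularity of $\partial\Omega$ yields $\sigma(\{x\in\partial\Omega:y\in\gamma(x)\})\sim\delta(y)^{n-1}$, so another application of Fubini yields
$$
\int_{\Omega}\frac{\widetilde{u}(y)\widetilde{G}(y)}{\delta(y)}\,dy \sim \int_{\partial\Omega}\int_{\gamma(x)}\frac{\widetilde{u}(y)\widetilde{G}(y)}{\delta(y)^{n}}\,dy\,d\sigma(x).
$$
Third, since $\widetilde{u}(y)\le\widetilde{\mathcal{N}}(u)(x)$ whenever $y\in\gamma(x)$, I would factor $\widetilde{\mathcal{N}}(u)(x)$ out of the inner integral and recognize the remainder as $\widetilde{\mathcal{A}}_{1}(G)(x)$; H\"older's inequality on $\partial\Omega$ then closes \eqref{e2.1}.

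For \eqref{e2.2}, I would use $L^{p}$--$L^{p'}$ duality on $\partial\Omega$. Given $u\in\widetilde{T}^{p}_{\infty}(\Omega)$, pick $g\in L^{p'}(\partial\Omega,\sigma)$ with $g\ge 0$ and $\|g\|_{L^{p'}(\partial\Omega,\sigma)}=1$ nearly attaining $\|\widetilde{\mathcal{N}}(u)\|_{L^{p}(\partial\Omega,\sigma)}$; using a measurable selection, choose for each $x\in\partial\Omega$ a point $y(x)\in\gamma(x)$ nearly realizing the sup in $\widetilde{\mathcal{N}}(u)(x)$. Then construct a test function $F$ as a weighted superposition of $L^{2}$-averaging kernels centered at the $y(x)$ and weighted by $g$, normalized so that $|\int_{\Omega}uF\,dy|\gtrsim\int_{\partial\Omega}g\,\widetilde{\mathcal{N}}(u)\,d\sigma\gtrsim\|\widetilde{\mathcal{N}}(u)\|_{L^{p}(\partial\Omega,\sigma)}$. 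A pointwise Carleson-type bound of the form $\widetilde{\mathcal{C}}_{1}(\delta F)(x)\lesssim \mathcal{M}g(x)$, where $\mathcal{M}$ is the Hardy--Littlewood maximal operator on $(\partial\Omega,\sigma)$, combined with the $L^{p'}$-boundedness of $\mathcal{M}$, yields $\|\widetilde{\mathcal{C}}_{1}(\delta F)\|_{L^{p'}(\partial\Omega,\sigma)}\lesssim 1$ and completes \eqref{e2.2}. The third inequality is obtained by the analogous construction with the roles of $\widetilde{\mathcal{N}}$ and $\widetilde{\mathcal{C}}_{1}$ interchanged.

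The main obstacle will be verifying the pointwise Carleson estimate $\widetilde{\mathcal{C}}_{1}(\delta F)\lesssim \mathcal{M}g$ in the duality step: this requires controlling the overlap of the Whitney balls $\{B(y(x),c\delta(y(x)))\}_{x\in\partial\Omega}$ when intersected with a Carleson box, which in turn reduces to the Ahlfors regularity of $\partial\Omega$ and a bounded-overlap property for non-tangential cones. The measurable selection of $y(\cdot)$ is routine via the standard selection theorem applied to the $\sigma$-compact cones $\gamma(x)$. By contrast, the forward bound \eqref{e2.1} is essentially formal once the Whitney averaging reduction is in place.
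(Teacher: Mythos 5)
A preliminary remark: the paper itself does not prove Lemma \ref{NdualC} but quotes it from \cite{mpt22,fl24}, so your attempt can only be measured against the standard tent-space duality arguments used there. Your proof of \eqref{e2.1} is correct and is exactly that standard argument: insert Whitney averages by Fubini, apply Cauchy--Schwarz on each Whitney ball, use the Ahlfors-regularity fact $\sigma(\{x\in\partial\Omega:\,y\in\gamma(x)\})\sim[\delta(y)]^{n-1}$ to convert the solid integral into an integral over cones, factor out $\widetilde{\mathcal{N}}(u)(x)$, recognize $\widetilde{\mathcal{A}}_1(\delta F)(x)$, and finish with H\"older's inequality and Lemma \ref{TentEqui}. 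Two small points need a word: Lemma \ref{TentEqui} is stated for $L^\infty_{\rm c}(\Omega)$ functions, so you should add a density or monotone-convergence remark, and the change of radius/aperture parameters created by the Whitney averaging must also be absorbed by Lemma \ref{TentEqui}; both are routine.

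The genuine gap is in the converse inequalities. You propose to take a dual function $g\in L^{p'}(\partial\Omega,\sigma)$, select near-optimal points $y(x)\in\gamma(x)$, and let $F$ be ``a weighted superposition of $L^2$-averaging kernels centered at $y(x)$, normalized so that $|\int_\Omega uF|\gtrsim\int_{\partial\Omega}g\,\widetilde{\mathcal{N}}(u)\,d\sigma$''. If the kernels are $u$-independent averaging kernels such as $|B|^{-1}\mathbf{1}_B$, this fails: the pairing is linear in $u$, so it only sees the signed mean $\fint_B u$, which can be much smaller than the $L^2$ average $[\fint_B|u|^2]^{1/2}$ entering $\widetilde{\mathcal{N}}(u)$, and no normalization independent of $u$ can recover it. The missing idea is that $F$ must be built from $u$ itself: on $B_{y(x)}:=B(y(x),\delta(y(x))/4)$ take $\phi_{y(x)}:=u\mathbf{1}_{B_{y(x)}}\,/\,(|B_{y(x)}|^{1/2}\|u\|_{L^2(B_{y(x)})})$, so that $\int_\Omega u\phi_{y(x)}\,dw=[\fint_{B_{y(x)}}|u|^2\,dw]^{1/2}\ge\frac12\widetilde{\mathcal{N}}(u)(x)$, and set $F:=\int_{\partial\Omega}g(x)\phi_{y(x)}\,d\sigma(x)$; with this choice the pointwise bound $\widetilde{\mathcal{C}}_1(\delta F)\lesssim\mathcal{M}g$ that you flagged as the main obstacle does hold (Minkowski's integral inequality plus the observation that only $x\in B(\xi,Cr)\cap\partial\Omega$ contributes to the Carleson box over $B(\xi,r)$, each contributing $\lesssim g(x)$), so the real issue is the construction itself, which your sketch leaves unspecified at the decisive point. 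Likewise, the third inequality is not obtained by ``interchanging the roles'': there one passes to $\widetilde{\mathcal{A}}_1(u)$ via Lemma \ref{TentEqui}, chooses $g\in L^p(\partial\Omega,\sigma)$ dual to it, and pairs $u$ against a test function of the form $F=uH$, where $H$ is a superposition over Whitney balls weighted by $m_g(y):=\int_{\{x:\,y\in\gamma(x)\}}g\,d\sigma$ and by $[\widetilde{u}(y)]^{-1}$; verifying $\widetilde{\mathcal{N}}(\delta F)\lesssim\mathcal{M}g$ then forces one to compare $L^2$ averages over intersecting Whitney balls, hence to juggle two radius parameters and invoke Lemma \ref{TentEqui} again. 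As written, the converse half of the lemma is therefore incomplete.
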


\begin{lemma}\label{Dense}
Let $n\ge2$, $\Omega\subset\mathbb{R}^n$ be a bounded one-sided {\rm CAD}, and $p\in[1,\infty)$. Then $L_{\rm c}^{\infty}(\Omega)$
is dense in $\widetilde{T}_1^{p}(\Omega)$.
\end{lemma}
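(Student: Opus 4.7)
The plan is to construct an explicit approximating sequence $\{u_k\}_{k \in \mathbb{N}} \subset L_{\rm c}^\infty(\Omega)$ converging to $u$ in the $\widetilde{T}_1^p$-norm, obtained by a simple truncation of $u$ simultaneously in the distance to $\partial\Omega$ and in the values. Since $\Omega$ is bounded, for each $k \in \mathbb{N}$ the set
$$
E_k := \left\{y \in \Omega : \delta(y) \geq \tfrac{1}{k},\ |u(y)| \leq k\right\}
$$
is a bounded subset of $\Omega$ whose closure in $\mathbb{R}^n$ (contained in $\{\delta \geq 1/k\}$) is compact and lies in $\Omega$; consequently $u_k := u \mathbf{1}_{E_k}$ belongs to $L_{\rm c}^\infty(\Omega)$ with $\|u_k\|_\infty \leq k$. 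Setting $v_k := u - u_k = u \mathbf{1}_{\Omega \setminus E_k}$, one has $|v_k| \leq |u|$ pointwise and $v_k \to 0$ almost everywhere in $\Omega$, since $\delta(y) > 0$ and $|u(y)| < \infty$ for a.e. $y$.

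For $p \in (1,\infty)$, I would invoke Lemma \ref{TentEqui} to pass from the Carleson functional to the area functional,
$$
\|v_k\|_{\widetilde{T}_1^p(\Omega)} = \left\|\widetilde{\mathcal{C}}_1(v_k)\right\|_{L^p(\partial\Omega,\sigma)} \lesssim \left\|\widetilde{\mathcal{A}}_1(v_k)\right\|_{L^p(\partial\Omega,\sigma)},
$$
and then establish the convergence of the right-hand side to zero via three successive applications of the dominated convergence theorem: first, on each ball $B(y,\delta(y)/4)$ with $y \in \Omega$, to obtain $[\fint_{B(y,\delta(y)/4)} |v_k|^2 \, dz]^{1/2} \to 0$ using the local dominant $|u|^2 \in L^2_{\rm loc}(\Omega)$; second, on the cone $\gamma(x)$ for a.e.\ $x \in \partial\Omega$, to obtain $\widetilde{\mathcal{A}}_1(v_k)(x) \to 0$ using the dominant $\widetilde{\mathcal{A}}_1(u)(x)$ which is finite a.e.\ because $\widetilde{\mathcal{A}}_1(u) \in L^p(\partial\Omega,\sigma)$ by Lemma \ref{TentEqui}; and third, on $\partial\Omega$ itself, to conclude $\|\widetilde{\mathcal{A}}_1(v_k)\|_{L^p(\partial\Omega,\sigma)} \to 0$, with dominant $\widetilde{\mathcal{A}}_1(u) \in L^p(\partial\Omega,\sigma)$.

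The main obstacle is the borderline case $p = 1$, where the area--Carleson $L^p$-equivalence of Lemma \ref{TentEqui} is unavailable. In fact, $\widetilde{\mathcal{C}}_1(v_k)(x)$ need not converge pointwise to zero at all, because truncating $u$ away from $\partial\Omega$ leaves $\widetilde{\mathcal{C}}_1(u)(x)$ unchanged on sufficiently small scales $r \leq 1/(2k)$, so a direct dominated convergence argument for the Carleson functional fails. For this case I would instead invoke an atomic decomposition of $\widetilde{T}_1^1(\Omega)$ in the spirit of Coifman--Meyer--Stein \cite{cms85}: every $u \in \widetilde{T}_1^1(\Omega)$ admits an absolutely convergent representation $u = \sum_j \lambda_j a_j$, where each atom $a_j$ is bounded and supported in a Carleson region over a boundary ball (hence lies in $L_{\rm c}^\infty(\Omega)$) and $\sum_j |\lambda_j| \lesssim \|u\|_{\widetilde{T}_1^1(\Omega)}$; the partial sums $u_N := \sum_{j \leq N} \lambda_j a_j$ then provide the desired approximation in the tent norm, reducing the density to the absolute convergence of the atomic series.
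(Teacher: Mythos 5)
The paper itself gives no proof of Lemma \ref{Dense} (it is quoted from \cite{mpt22,fl24}), so your argument has to stand on its own. For $p\in(1,\infty)$ your truncation scheme ($u_k=u\mathbf{1}_{E_k}$, three layers of dominated convergence through $\widetilde{\mathcal{A}}_1$) is essentially correct, with one small point to address: Lemma \ref{TentEqui} is stated only for $u\in L^\infty_{\rm c}(\Omega)$, whereas you apply the $\widetilde{\mathcal{A}}_1$--$\widetilde{\mathcal{C}}_1$ equivalence to $u$ and to $v_k=u-u_k$, which are general elements of $\widetilde{T}^p_1(\Omega)$; since both functionals are monotone in $|u|$, the equivalence extends by monotone approximation, but this step should be stated.

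The genuine gap is the case $p=1$. First, atoms in a Coifman--Meyer--Stein type decomposition are supported in Carleson regions over boundary balls; such sets touch $\partial\Omega$, so the atoms are \emph{not} in $L^\infty_{\rm c}(\Omega)$ in the sense used throughout this paper (support compactly contained in the open set $\Omega$, i.e.\ bounded away from $\partial\Omega$ --- this is exactly how $L^\infty_{\rm c}$ is used in the arguments ``let $E$ be compact in $\Omega$, $E\uparrow\Omega$''). So even granting a decomposition, the partial sums are not admissible approximants and you would have to truncate each atom, which is the problem you started with. Second, the atomic theory in \cite{cms85} is adapted to the area functional, while $\widetilde{T}^1_1(\Omega)$ is defined here through the Carleson functional; at $p=1$ these norms are not comparable (as you yourself observe), and indeed an area-normalized atom $a$ attached to a boundary ball of radius $r$ satisfies $\|\widetilde{\mathcal{C}}_1(a)\|_{L^1(\partial\Omega,\sigma)}\sim\log(\mathrm{diam}(\Omega)/r)$, so atoms do not have uniformly bounded $\widetilde{T}^1_1$ norms and $\sum_j|\lambda_j|<\infty$ does not give convergence of the partial sums in $\widetilde{T}^1_1(\Omega)$; there is no standard decomposition with $\ell^1$ control relative to the Carleson norm that you can cite. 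In fact the truncation approach does go through at $p=1$, just not by plain dominated convergence: writing $\widetilde{\mathcal{C}}_1(v_k)(x)=\sup_{r}\nu_k(B(x,r))/\sigma(B(x,r))$ with $d\nu_k(y):=[\fint_{B(y,\delta(y)/4)}|v_k|^2\,dz]^{1/2}\,\frac{dy}{\delta(y)}$, one has $\nu_k(\Omega)\to0$ (dominated convergence, since $\nu_u(\Omega)\lesssim\|u\|_{\widetilde{T}^1_1(\Omega)}<\infty$), and a Vitali/Besicovitch covering argument together with the Ahlfors regularity of $\sigma$ gives the weak type bound $\sigma(\{x\in\partial\Omega:\widetilde{\mathcal{C}}_1(v_k)(x)>\lambda\})\lesssim\nu_k(\Omega)/\lambda$; hence $\widetilde{\mathcal{C}}_1(v_k)\to0$ in $\sigma$-measure, and since $\widetilde{\mathcal{C}}_1(v_k)\le\widetilde{\mathcal{C}}_1(u)\in L^1(\partial\Omega,\sigma)$ this upgrades to $\|v_k\|_{\widetilde{T}^1_1(\Omega)}\to0$. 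So the obstruction you identified (possible failure of pointwise convergence of the Carleson functional) is circumvented by a maximal-function argument, not by an atomic decomposition; as written, your $p=1$ argument does not prove the lemma.
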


Moreover, we also need Caccioppoli's inequality, the Moser type estimate, and the boundary H\"older regularity estimate
for weak solutions to the Robin problem \eqref{e1.4}. The following Lemma \ref{Cacciopoli} presents Caccioppoli's inequality
for weak solutions.

\begin{lemma}\label{Cacciopoli}
Let $n\ge2$, $\Omega\subset\mathbb{R}^n$ be a bounded one-sided {\rm CAD}, $L:=-\mathrm{div}(A\nabla\cdot)$ be a uniformly elliptic operator,
$\boldsymbol{F}\in L_{\rm c}^{\infty}(\Omega;\mathbb{R}^n)$, and $\alpha$ be the same as in \eqref{e1.3}. Then there exists a positive constant $C$,
depending only on $\Omega$, $n$, and $\mu_0$, such that
\begin{itemize}
\item[{\rm(i)}] for any ball $B:=B(x,r)$ satisfying $2B \subset \Omega$
and for any weak solution $u\in W^{1,2}(2B)$ to $Lu=-\mathrm{div\,}\boldsymbol{F}$ in $2B$,
$$
\left(\fint_B |\nabla u|^2 \,dx\right)^{\frac{1}{2}}\le C\left[\frac{1}{r}\left(\fint_{2B} |u|^2\,dx\right)^{\frac{1}{2}}+
\left(\fint_{2B}|\boldsymbol{F}|^2\,dx\right)^{\frac{1}{2}}\right];
$$
\item[{\rm(ii)}] for any ball $B:=B(x,r)$ with $x\in\partial\Omega$ and $r\in(0,\mathrm{diam\,}(\Omega))$ and for any
weak local solution $u\in W^{1,2}(2B\cap \Omega)$ to $Lu=-\mathrm{div\,}\boldsymbol{F}$ with the Robin boundary condition
$\frac{\partial u}{\partial\boldsymbol{\nu}}+\alpha  u=\boldsymbol{F}\cdot\boldsymbol{\nu}$ on $2B\cap\partial\Omega$,
$$
\left(\fint_{B\cap \Omega} |\nabla u|^2\,dx\right)^{\frac{1}{2}}\le C\left[\frac{1}{r} \left(\fint_{2B \cap \Omega} |u|^2 \, dx\right)^{\frac{1}{2}}+\left(\fint_{2B\cap\Omega}|\boldsymbol{F}|^2\,dx\right)^{\frac{1}{2}}\right].
$$
\end{itemize}
\end{lemma}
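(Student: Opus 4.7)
\noindent\textbf{Proof proposal for Lemma \ref{Cacciopoli}.}
The plan is to run the standard Caccioppoli argument, using a radial cutoff function and testing the weak formulation against $u\eta^{2}$, and to exploit the fact that in the Robin case the sign of $\alpha$ makes the boundary term favorable rather than obstructive.

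For part (i), I would fix a smooth cutoff $\eta\in C_{\rm c}^{\infty}(2B)$ with $\eta\equiv1$ on $B$, $0\le\eta\le1$, and $|\nabla\eta|\lesssim 1/r$. Testing the interior equation $Lu=-\mathrm{div}\,\boldsymbol{F}$ against $\phi:=u\eta^{2}\in W^{1,2}_{0}(2B)$ (this is an admissible test function since $2B\subset\Omega$), expanding
\[
\int_{2B}A\nabla u\cdot\nabla u\,\eta^{2}\,dx+2\int_{2B}A\nabla u\cdot\nabla\eta\,u\eta\,dx
=\int_{2B}\boldsymbol{F}\cdot\nabla u\,\eta^{2}\,dx+2\int_{2B}\boldsymbol{F}\cdot\nabla\eta\,u\eta\,dx,
\]
then invoking the lower ellipticity bound $\mu_{0}|\nabla u|^{2}\le A\nabla u\cdot\nabla u$ and Young's inequality $ab\le\varepsilon a^{2}+C_{\varepsilon}b^{2}$ on each mixed term, I would absorb a small fraction of $\int|\nabla u|^{2}\eta^{2}\,dx$ into the left-hand side. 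Dividing through by $|B|\sim r^{n}$ and using $|\nabla\eta|\lesssim1/r$ yields the claimed estimate.

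For part (ii), the cutoff $\eta\in C_{\rm c}^{\infty}(\mathbb{R}^{n})$ is now supported in $2B$ but not in $\Omega$, so I would take $\phi:=u\eta^{2}$ and insert it into the weak formulation \eqref{e1.5} on $2B\cap\Omega$ for the Robin problem. The key observation is that this produces the identity
\[
\int_{2B\cap\Omega}A\nabla u\cdot\nabla u\,\eta^{2}\,dx+\int_{2B\cap\partial\Omega}\alpha u^{2}\eta^{2}\,d\sigma
=-2\int_{2B\cap\Omega}A\nabla u\cdot\nabla\eta\,u\eta\,dx+\int_{2B\cap\Omega}\boldsymbol{F}\cdot\nabla(u\eta^{2})\,dx,
\]
and since $\alpha\ge0$ by \eqref{e1.3}, the boundary term is nonnegative and can simply be dropped when estimating from below. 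The rest of the argument is identical to part (i): apply ellipticity, split $\nabla(u\eta^{2})=\eta^{2}\nabla u+2u\eta\nabla\eta$, apply Young's inequality, absorb the $|\nabla u|^{2}\eta^{2}$ terms, and normalize. Here $|B\cap\Omega|\sim r^{n}$ follows from the corkscrew condition in Definition \ref{CSandHC}(a), which permits me to pass from the resulting unnormalized inequality to the averaged form.

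I do not expect any serious obstacle: the main subtlety is merely noticing that the Robin boundary integral carries a favorable sign (so one avoids having to estimate $\alpha$ in some trace space), which is why the statement does not require the $L^{n-1+\varepsilon_{0}}$ control on $\alpha$ that appears in later arguments. The only mildly technical point is justifying that $u\eta^{2}$ is a valid test function in \eqref{e1.5}: it belongs to $W^{1,2}(\Omega)$ with the correct support, so by a routine density/extension argument (using that $C_{\rm c}^{\infty}(\mathbb{R}^{n})$ restricted to $\Omega$ is dense in the relevant subspace of $W^{1,2}(\Omega)$) the identity extends to it.
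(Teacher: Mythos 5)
Your proposal is correct and follows essentially the same route as the paper: test the local weak formulation with $u\eta^{2}$, use that the Robin boundary term $\int\alpha u^{2}\eta^{2}\,d\sigma$ is nonnegative (so it can be kept or dropped harmlessly), then apply ellipticity and Young's inequality to absorb the gradient terms. Your additional remarks on the admissibility of the test function and on $|B\cap\Omega|\sim r^{n}$ via the corkscrew condition are consistent with what the paper's argument implicitly uses.
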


\begin{proof}
Here we only give the proof of (ii) because the proof of (i) is similar.

Let $B:=B(x,r)$ with $x\in\partial\Omega$ and $r\in(0,\mathrm{diam\,}(\Omega))$ and let $u\in W^{1,2}(2B\cap \Omega)$ be a weak solution
to $Lu=-\mathrm{div\,}\boldsymbol{F}$ in $2B\cap \Omega$ with the Robin boundary condition
$\frac{\partial u}{\partial\boldsymbol{\nu}}+\alpha  u=\boldsymbol{F}\cdot\boldsymbol{\nu}$ on $2B\cap\partial\Omega$.
Then, for any $\phi\in W^{1,2}(2B\cap\Omega)$,
$$
\int_{\Omega}A\nabla u\cdot \nabla\phi\,dx+\int_{\partial\Omega}\alpha  u\phi\,d\sigma=\int_{\Omega}\boldsymbol{F}\cdot\nabla\phi\,dx.
$$
Take $\psi\in C_{\rm c}^{\infty}(2B)$ be such that $0\le\psi\le1$, $\psi\equiv1$ on $B$, and $|\nabla\psi|\lesssim\frac{1}{r}$.
Let $\phi:=u\psi^2$.
Then
\begin{align}\label{e2.3}
\int_{\Omega}|\nabla u|^2\psi^2\,dx
&\lesssim\int_{\Omega} A\nabla u\cdot\nabla u\psi^2\,dx+\int_{\partial\Omega}\alpha  u^2\psi^2\,d\sigma\nonumber\\ \nonumber
&=\int_{\Omega}A\nabla u\cdot\nabla(u\psi^2)\,dx-2\int_{\Omega}u\psi A\nabla u\cdot\nabla\psi \,dx+\int_{\partial\Omega}\alpha  u^2\psi^2\,d\sigma\\
&=\int_{\Omega}\boldsymbol{F}\cdot\nabla(u\psi^2)\,dx-2\int_{\Omega}u\psi A\nabla u\cdot\nabla\psi\,dx:=\mathrm{I}_1+\mathrm{I}_2.
\end{align}
For the term $\mathrm{I}_1$, from Young's inequality, we deduce that, for any given $\varepsilon\in(0,1/2)$,
\begin{align}\label{e2.4}
|\mathrm{I}_1|
&\le\left|\int_{\Omega}\psi^2\boldsymbol{F}\cdot\nabla u\,dx\right|+2\left|\int_{\Omega}u\psi\boldsymbol{F}\cdot\nabla\psi\,dx\right|\nonumber\\
&\le\varepsilon\int_{\Omega}|\nabla u|^2\psi^2\,dx+C_{(\varepsilon)}\int_{\Omega}|\boldsymbol{F}|^2\psi^2\,dx
+\int_{\Omega}|\boldsymbol{F}|^2\psi^2\,dx+\int_{\Omega}|u|^2|\nabla\psi|^2\,dx.
\end{align}
For $\mathrm{I}_2$, similar to the estimation of \eqref{e2.4}, we have
\begin{align}\label{e2.5}
|\mathrm{I}_2|
\le\varepsilon\int_{\Omega}|\nabla u|^2\psi^2\,dx+C_{(\varepsilon)}\int_{\Omega}|u|^2|\nabla\psi|^2\,dx.
\end{align}
Letting $\varepsilon:=1/4$ in \eqref{e2.4} and \eqref{e2.5} and using \eqref{e2.3}, \eqref{e2.4}, and \eqref{e2.5}, we conclude that
\begin{align*}
\int_{B\cap\Omega}|\nabla u|^2\,dx&\le\int_{\Omega}|\nabla u|^2\psi^2\,dx\lesssim\int_{\Omega}|u|^2|\nabla\psi|^2\,dx
+\int_{\Omega}|\boldsymbol{F}|^2\psi^2\,dx\\
&\lesssim\int_{2B\cap\Omega}|\boldsymbol{F}|^2\,dx+\frac{1}{r^2}\int_{2B\cap\Omega}|u|^2\,dx,
\end{align*}
which further implies that
$$
\left(\fint_{B\cap\Omega}|\nabla u|^2\,dx\right)^{\frac{1}{2}}
\lesssim\frac{1}{r}\left(\fint_{2B\cap\Omega}|u|^2\,dx\right)^{\frac{1}{2}}
+\left(\fint_{2B\cap\Omega}|\boldsymbol{F}|^2\,dx\right)^{\frac{1}{2}}.
$$
This finishes the proof of (ii) and hence Lemma \ref{Cacciopoli}.
\end{proof}

\begin{lemma}\label{Moser}
Let $n\ge2$, $\Omega\subset\mathbb{R}^n$ be a bounded one-sided {\rm CAD}, $L:=-\mathrm{div}(A\nabla\cdot)$ be a uniformly elliptic operator,
and $\alpha$ be the same as in \eqref{e1.3}. Then there exists a positive constant $C$, depending only on $\Omega$, $n$, and $L$, such that
\begin{itemize}
\item[{\rm(i)}] for any ball $B:=B(x,r)$ satisfying that $2B \subset \Omega$
and for any weak solution $u\in W^{1,2}(2B)$ to $Lu=0$ in $2B$,
$$
\underset{x\in B}{\sup}\, |u(x)|\le C \fint_{2B} |u|\,dx;
$$
\item[{\rm(ii)}]  for any ball $B:=B(x,r)$ with $x\in\partial\Omega$ and $r\in(0,\mathrm{diam\,}(\Omega))$
and for any weak local solution $u\in W^{1,2}(2B\cap \Omega)$
to $Lu=0$ with the Robin boundary condition $\frac{\partial u}{\partial\boldsymbol{\nu}}+\alpha  u=0$ on $2B\cap\partial\Omega$,
$$
\underset{x\in B\cap \Omega}{\sup} |u(x)| \le C \fint_{2B \cap \Omega}|u|\,dx.
$$
\end{itemize}
\end{lemma}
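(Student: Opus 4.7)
The plan is to prove both items by Moser iteration. Part (i) is the classical interior De Giorgi--Nash--Moser estimate for uniformly elliptic operators in divergence form: it follows from Caccioppoli's inequality applied to powers $|u|^{k}$, the Sobolev embedding on Euclidean balls, and iteration of the exponent; I would simply cite (or briefly sketch) this standard argument. The real work is in part (ii), where the Robin boundary condition has to be controlled.

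For part (ii), fix a ball $B:=B(x_{0},r)$ with $x_{0}\in\partial\Omega$ and, for $1/2\le s<t\le 1$, choose $\psi\in C_{\rm c}^{\infty}(tB)$ with $0\le\psi\le 1$, $\psi\equiv 1$ on $sB$, and $|\nabla\psi|\lesssim[(t-s)r]^{-1}$. Testing the weak formulation against $\phi:=u|u|^{2k-2}\psi^{2}$ for $k\ge 1$ yields
\begin{equation*}
(2k-1)\!\int_{\Omega}|u|^{2k-2}A\nabla u\cdot\nabla u\,\psi^{2}\,dx+2\!\int_{\Omega}u|u|^{2k-2}\psi A\nabla u\cdot\nabla\psi\,dx+\!\int_{\partial\Omega}\alpha|u|^{2k}\psi^{2}\,d\sigma=0.
\end{equation*}
Because $\alpha\ge 0$ by \eqref{e1.3}, the boundary integral is non-negative and can be dropped. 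Ellipticity and Young's inequality applied to the two remaining interior terms (exactly as in the proof of Lemma \ref{Cacciopoli}) then produce the power Caccioppoli inequality
\begin{equation*}
\int_{sB\cap\Omega}|\nabla(|u|^{k})|^{2}\,dx\lesssim \frac{k^{2}}{[(t-s)r]^{2}}\int_{tB\cap\Omega}|u|^{2k}\,dx.
\end{equation*}

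Next I would invoke the local Sobolev--Poincar\'e inequality valid on $2B\cap\Omega$ for one-sided CADs (the corkscrew and Harnack chain conditions provide a uniform extension/Sobolev estimate with constants independent of $x_{0}$ and $r$) to obtain
$\||u|^{k}\|_{L^{2\chi}(sB\cap\Omega)}\lesssim [(t-s)r]^{-1}k\,\||u|^{k}\|_{L^{2}(tB\cap\Omega)}$, where $\chi:=n/(n-2)$ if $n\ge 3$ and any $\chi>1$ suffices if $n=2$. Iterating this estimate along the standard geometric sequence of radii $r_{j}:=(1+2^{-j})r$ and exponents $q_{j}:=2\chi^{j}$ yields the Moser bound $\sup_{B\cap\Omega}|u|\lesssim(\fint_{2B\cap\Omega}|u|^{2}\,dx)^{1/2}$. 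To pass from an $L^{2}$ average to the $L^{1}$ average stated in the lemma, I would apply the usual self-improving argument (an interpolation of the sup with a weak-type $L^{1}$ bound, in the spirit of Bombieri--Giusti).

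The main obstacle is the local Sobolev--Poincar\'e inequality on $2B\cap\Omega$ with a constant depending only on the geometric parameters of $\Omega$; this is where the one-sided CAD geometry is used in an essential way, and it is the only place where the argument departs from the interior case. The Robin boundary term itself, thanks to the sign hypothesis $\alpha\ge 0$, is a net benefit rather than an obstruction: it contributes with a favorable sign in every step of the iteration and can simply be discarded, so the scheme mirrors the Neumann case treated in \cite{fl24}.
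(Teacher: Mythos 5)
Your proposal is correct in substance, but it takes a different route from the paper: the paper does not prove Lemma \ref{Moser} at all, it simply records that (i) is classical (citing \cite[Lemma 1.1.8]{k94}) and that (ii) was ``essentially obtained'' in \cite[Lemma 3.1]{ls05} (a Lipschitz-domain result; see also \cite{ddemm24} for the rough setting), whereas you give a self-contained Moser iteration adapted to the Robin condition. Your key observation -- that testing with $u|u|^{2k-2}\psi^{2}$ makes the boundary term $\int_{\partial\Omega}\alpha|u|^{2k}\psi^{2}\,d\sigma$ appear with a favorable sign, so it can be discarded and the iteration proceeds exactly as in the interior/Neumann case -- is the right mechanism, and it has the advantage of making explicit that only $\alpha\ge0$ is used, so the constant is uniform in $\alpha$ and the argument applies verbatim on one-sided CADs rather than only on Lipschitz domains. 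Two points you should make explicit to have a complete proof: first, $u|u|^{2k-2}\psi^{2}$ is not an admissible $W^{1,2}$ test function for unbounded $u$, so you need the standard truncation $\phi:=u\min\{|u|,N\}^{2k-2}\psi^{2}$ and a passage to the limit $N\to\infty$; second, the Sobolev--Poincar\'e inequality on $tB\cap\Omega$ with constants depending only on the geometric constants of $\Omega$, which you correctly identify as the only genuinely geometric ingredient, should be justified via the interior corkscrew and Harnack chain conditions (which make $\Omega$ a uniform, hence extension, domain; compare Lemma \ref{l2.2} and \cite[Lemma 2.2]{ddemm24}). With these two standard points filled in, your iteration together with the usual absorption argument passing from the $L^{2}$ to the $L^{1}$ average yields the stated estimate; the paper's citation-based treatment is shorter, while yours is more transparent and self-contained.
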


Lemma \ref{Moser} gives the Moser type estimate for weak solutions to the Robin problem. Lemma \ref{Moser}(i) is classical
(see, for instance, \cite[Lemma 1.1.8]{k94}), and Lemma \ref{Moser}(ii) was essentially obtained in \cite[Lemma 3.1]{ls05}.

Moreover, as a corollary of Caccioppoli's inequality as in Lemma \ref{Cacciopoli} and the Moser type estimate as in Lemma \ref{Moser},
we have the following Meyers type estimate for the weak solution to the local Robin problem.

\begin{corollary}\label{c2.1}
Let $n\ge2$, $\Omega\subset\mathbb{R}^n$ be a bounded one-sided {\rm CAD}, $L:=-\mathrm{div}(A\nabla\cdot)$ be a uniformly elliptic operator,
and $\alpha$ be the same as in \eqref{e1.3}. Assume that $B:=B(x,r)$ with $x\in\partial\Omega$ and $r\in(0,\mathrm{diam\,}(\Omega))$
and $u\in W^{1,2}(4B\cap \Omega)$ is a weak solution to $Lu=0$ with the Robin boundary condition
$\frac{\partial u}{\partial\boldsymbol{\nu}}+\alpha  u=0$ on $4B\cap\partial\Omega$. Then there exists a positive constant $\varepsilon$,
depending only on $\Omega$ and $L$, such that
\begin{equation}\label{e2.6}
\left(\fint_{B\cap \Omega} |\nabla u|^{2+\varepsilon}\,dx+\fint_{B\cap \Omega} |u|^{2+\varepsilon}\,dx\right)^{\frac{1}{2+\varepsilon}}
\le C\left(\fint_{4B \cap \Omega} |\nabla u|\, dx
+\fint_{4B \cap \Omega} |u|\, dx\right),
\end{equation}
where $C$ is a positive constant independent of both $B$ and $u$.
\end{corollary}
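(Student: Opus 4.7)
The plan is to prove Corollary \ref{c2.1} as a Meyers-type self-improvement, combining Caccioppoli's inequality (Lemma \ref{Cacciopoli}) with the Moser estimate (Lemma \ref{Moser}) to set up a reverse H\"older inequality for $|\nabla u|$ on sub-balls and then invoking Gehring's lemma. The $|u|^{2+\varepsilon}$ contribution will be handled directly by Moser.

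First I would dispose of the $|u|^{2+\varepsilon}$ term. Applying Lemma \ref{Moser}(ii) to the ball $B=B(x,r)$ (noting that $u$ satisfies $Lu=0$ with the Robin boundary condition on $4B\cap\partial\Omega\supset 2B\cap\partial\Omega$) yields $\sup_{B\cap\Omega}|u|\le C\fint_{2B\cap\Omega}|u|\,dx\le C\fint_{4B\cap\Omega}|u|\,dx$, where the last inequality uses doubling of $|\cdot\cap\Omega|$ inherited from the one-sided {\rm CAD} structure. Consequently, for every $\varepsilon\in(0,\infty)$,
$$
\left(\fint_{B\cap\Omega}|u|^{2+\varepsilon}\,dx\right)^{\frac{1}{2+\varepsilon}}\le\sup_{B\cap\Omega}|u|\le C\fint_{4B\cap\Omega}|u|\,dx,
$$
which already covers the $|u|$ contribution on the right-hand side of \eqref{e2.6}.

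Second, for $|\nabla u|^{2+\varepsilon}$ I would establish a reverse H\"older inequality for $|\nabla u|$ on sub-balls $B'=B(y,s)$ with $2B'\subset 4B$, distinguishing two cases. If $2B'\subset\Omega$, then since $L$ annihilates constants $u-\overline{u}_{2B'}$ is still a weak solution, so Lemma \ref{Cacciopoli}(i) combined with the classical Sobolev--Poincar\'e inequality produces
$$
\left(\fint_{B'}|\nabla u|^{2}\,dx\right)^{1/2}\le C\left(\fint_{2B'}|\nabla u|^{2_\ast}\,dx\right)^{1/2_\ast},\qquad 2_\ast:=\frac{2n}{n+2}\in[1,2).
$$
If $y\in\partial\Omega$, then Lemma \ref{Cacciopoli}(ii) gives
$$
\left(\fint_{B'\cap\Omega}|\nabla u|^{2}\,dx\right)^{1/2}\le\frac{C}{s}\left(\fint_{2B'\cap\Omega}|u|^{2}\,dx\right)^{1/2},
$$
and at this point I would pass to the joint quantity $G:=|\nabla u|^2+s^{-2}|u|^2$ (or equivalently rescale) to recover a reverse H\"older of the form $(\fint_{B'\cap\Omega}G)^{1/2}\le C(\fint_{2B'\cap\Omega}G^{q/2})^{1/q}$ for some $q\in[1,2)$, where the $s^{-2}|u|^2$ piece is absorbed on the right by combining the boundary Moser estimate (Lemma \ref{Moser}(ii), bounding $|u|$ pointwise on $2B'$ by $\fint_{4B'\cap\Omega}|u|$) with Jensen's inequality.

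Finally, I would apply Gehring's self-improvement lemma to promote the $L^2$ reverse H\"older to an $L^{2+\varepsilon}$ estimate for some $\varepsilon>0$ depending only on $n$, $\mu_0$, and the {\rm CAD} constants of $\Omega$, obtaining
$$
\left(\fint_{B\cap\Omega}|\nabla u|^{2+\varepsilon}\,dx\right)^{\frac{1}{2+\varepsilon}}\le C\left[\fint_{4B\cap\Omega}|\nabla u|\,dx+\fint_{4B\cap\Omega}|u|\,dx\right]
$$
after one more application of Caccioppoli--Moser to pass from an $L^2$ average of $G$ to the $L^1$ averages of $|\nabla u|$ and $|u|$ on $4B\cap\Omega$. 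Combined with the first step this proves \eqref{e2.6}. The main obstacle I anticipate is precisely the non-scale-invariant factor $1/s$ that appears in the boundary reverse H\"older inequality: this reflects the unavailability of Poincar\'e's inequality in the Robin setting (one cannot freely subtract a constant from $u$ without disrupting the Robin boundary condition), and it is exactly the reason that the right-hand side of \eqref{e2.6} must contain a $|u|$ term alongside $|\nabla u|$. Managing this factor correctly---through the coupled quantity $G$, through careful pairing of small boundary balls with nearby interior corkscrew balls supplied by the one-sided {\rm CAD} condition, and through a non-homogeneous version of Gehring's lemma---is the central technical point of the argument.
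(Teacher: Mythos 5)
Your interior case and your treatment of the $|u|^{2+\varepsilon}$ term via Lemma \ref{Moser}(ii) are fine, but there is a genuine gap at the boundary step, which is the heart of the matter. From the unmodified boundary Caccioppoli inequality (Lemma \ref{Cacciopoli}(ii)) you only get $\bigl(\fint_{B'\cap\Omega}|\nabla u|^2\,dx\bigr)^{1/2}\lesssim s^{-1}\bigl(\fint_{2B'\cap\Omega}|u|^2\,dx\bigr)^{1/2}$, and Moser plus Jensen only converts this into $s^{-1}\fint_{4B'\cap\Omega}|u|\,dx$: the factor $s^{-1}$ survives, so this is not a reverse H\"older inequality for any fixed function. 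Your proposed remedy, the quantity $G=|\nabla u|^2+s^{-2}|u|^2$, depends on the radius $s$ of the particular sub-ball, so Gehring's lemma (which needs one fixed function satisfying a uniform reverse H\"older inequality over all sub-balls) does not apply to it, and no ``non-homogeneous Gehring'' absorbing an unbounded factor $s^{-1}$ is available: if $u$ is close to a nonzero constant near a boundary portion where $\alpha$ vanishes (which \eqref{e1.3} allows, since $\alpha\ge c_0$ is required only on a set of positive measure), then $s^{-1}\fint_{4B'\cap\Omega}|u|\,dx$ blows up as $s\to0$ while $\fint_{4B'\cap\Omega}(|\nabla u|+|u|)\,dx$ stays bounded, so the scale-uniform inequality you would need is simply false. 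You have correctly named the obstacle, but the devices you list (the coupled $G$, pairing with corkscrew balls, a non-homogeneous Gehring lemma) do not remove it.

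The missing idea --- and what the paper actually does --- is to redo the Caccioppoli argument at the boundary with the mean subtracted: test the equation with $(u-c_u)\psi^2$, where $c_u:=\fint_{2B\cap\Omega}u\,dx$, and control the resulting Robin boundary term $\int_{\partial\Omega}\alpha u(u-c_u)\psi^2\,d\sigma$ by the Moser bound of Lemma \ref{Moser}(ii) together with the integrability of $\alpha$ in \eqref{e1.3}. This yields the scale-invariant estimate \eqref{e2.7}, namely $\bigl(\fint_{B\cap\Omega}|\nabla u|^2\,dx\bigr)^{1/2}\lesssim r^{-1}\bigl(\fint_{2B\cap\Omega}|u-c_u|^2\,dx\bigr)^{1/2}+\fint_{2B\cap\Omega}|u|\,dx$, in which the penalty term carries no inverse power of the radius. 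The Sobolev--Poincar\'e inequality on $2B\cap\Omega$ (valid on a one-sided CAD; see Lemma \ref{l2.2} and \cite[Lemma 2.2]{ddemm24}) then lowers the exponent of the gradient, and combining with Lemma \ref{Moser}(ii) gives the genuine reverse H\"older inequality \eqref{e2.8} for the single function $|\nabla u|+|u|$ with exponents $2$ versus $p_0<2$, to which the Gehring-type self-improvement of \cite[Lemma 4.38]{bm16} applies and yields \eqref{e2.6}. Without this mean-subtracted Caccioppoli step, your argument cannot close.
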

\begin{proof}
Applying Lemma \ref{Moser}(ii) and an argument similar to that used in the proof of Lemma \ref{Cacciopoli}(ii), we obtain that
\begin{equation}\label{e2.7}
\left(\fint_{B\cap\Omega}|\nabla u|^2\,dx\right)^{\frac{1}{2}}
\lesssim\frac{1}{r}\left(\fint_{2B\cap\Omega}|u-c_u|^2\,dx\right)^{\frac{1}{2}}+\fint_{2B\cap\Omega}|u|\,dx,
\end{equation}
where $c_u:=\fint_{2B\cap\Omega} u\,dx$. Moreover, by Poincar\'e's inequality on $2B\cap\Omega$ (see, for instance, \cite[Lemma 2.2]{ddemm24}),
we find that there exists $p_0\in[1,2)$ such that
\begin{equation*}
\left(\fint_{2B\cap\Omega}|u-c_u|^2\,dx\right)^{\frac{1}{2}}\lesssim r\left(\fint_{3B\cap\Omega}|\nabla u|^{p_0}\,dx\right)^{\frac{1}{p_0}},
\end{equation*}
which, together with \eqref{e2.7} and Lemma \ref{Moser}(ii), implies that
\begin{equation}\label{e2.8}
\left(\fint_{B\cap\Omega}|\nabla u|^2\,dx+\fint_{B\cap\Omega}| u|^2\,dx\right)^{\frac{1}{2}}
\lesssim\left(\fint_{3B\cap\Omega}|\nabla u|^{p_0}\,dx+\fint_{3B\cap\Omega}| u|^{p_0}\,dx\right)^{\frac{1}{p_0}}.
\end{equation}
Then, applying \eqref{e2.8} and an argument similar to that
used in the proof of \cite[Lemma 4.38]{bm16}, we further conclude that
the estimate \eqref{e2.6} holds. This finishes the proof of Corollary \ref{c2.1}.
\end{proof}

\begin{lemma}\label{Holder}
Let $n\ge2$, $\Omega\subset\mathbb{R}^n$ be a bounded one-sided {\rm CAD}, $L:=-\mathrm{div}(A\nabla\cdot)$ be a uniformly elliptic operator,
and $\alpha$ be the same as in \eqref{e1.3}. Then there exist positive constants $C\in(0,\infty)$ and $\kappa\in(0,1]$,
depending only on $\Omega$ and $L$, such that
\begin{itemize}
\item[{\rm(i)}] for any ball $B:=B(x,r) \subset \Omega$, any weak solution $u\in W^{1,2}(B)$ to $Lu=0$ in $B$, and any $\epsilon \in (0,1)$,
$$
\underset{\epsilon B}{\mathrm{osc}}\, u\le C \epsilon^\kappa \underset{B}{\mathrm{osc}}\, u,
$$
where, for any given measurable set $E\subset\mathbb{R}^n$, $\underset{E}{\mathrm{osc}}\, u:=\underset{E}{\sup}\,u-\underset{E}{\inf}\,u$;
\item[{\rm(ii)}]  for any ball $B:=B(x,r)$ with $x\in\partial\Omega$ and $r\in(0,\mathrm{diam\,}(\Omega))$, any weak solution
$u\in W^{1,2}(2B\cap \Omega)$ to $Lu=0$ with the Robin boundary condition $\frac{\partial u}{\partial\boldsymbol{\nu}}+\alpha  u=0$ on
$2B\cap\partial\Omega$, and any $\varepsilon\in(0,1)$,
$$
\underset{\epsilon B\cap \Omega}{\mathrm{osc}}\,u \le C \epsilon^\kappa \underset{B\cap \Omega}{\mathrm{osc}}\, u.
$$
\end{itemize}
\end{lemma}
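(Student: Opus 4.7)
The plan is to prove (i) by a direct appeal to the De Giorgi--Nash--Moser theorem and to prove (ii) by a boundary oscillation-decay argument adapted to the Robin condition, in which the $L^{n-1+\varepsilon_0}$-integrability of $\alpha$ is used to absorb the boundary perturbation. For (i), since $u$ is a weak solution of a uniformly elliptic divergence-form equation with bounded measurable coefficients on the interior ball $B$, the classical De Giorgi--Nash--Moser theorem furnishes $\kappa\in(0,1]$ and $C>0$, depending only on $n$ and $\mu_0$, such that $\mathrm{osc}_{\epsilon B}u\le C\epsilon^\kappa\,\mathrm{osc}_B u$ for every $\epsilon\in(0,1)$; this I would simply quote.

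For (ii), my first step is to establish a boundary weak Harnack inequality of the form: if $v\in W^{1,2}(2B\cap\Omega)$ is non-negative, satisfies $Lv=0$ in $2B\cap\Omega$, and obeys $\frac{\partial v}{\partial\boldsymbol{\nu}}+\alpha v\ge 0$ on $2B\cap\partial\Omega$, then
\[
\left(\fint_{\frac{3}{2}B\cap\Omega}v^{p_0}\,dx\right)^{\frac{1}{p_0}}\lesssim\inf_{B\cap\Omega}v
\]
for some $p_0>0$ depending only on $n$, $\mu_0$, and the Ahlfors-regularity constants of $\partial\Omega$. The proof is a Moser iteration with test functions of the form $\eta^2(v+\varepsilon)^\beta$ with $\eta$ a cutoff and appropriate $\beta$; the Robin boundary term $\int_{\partial\Omega}\alpha v\varphi\,d\sigma$ is non-negative and hence discardable on the supersolution side, while on the subsolution side it is controlled via the trace embedding $W^{1,2}(B\cap\Omega)\hookrightarrow L^{\frac{2(n-1)}{n-2}}(\partial\Omega,\sigma)$ combined with H\"older's inequality and the hypothesis $\alpha\in L^{n-1+\varepsilon_0}(\partial\Omega,\sigma)$, which is strictly above the trace-critical exponent and so yields a term absorbable in the iteration.

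The second step converts this weak Harnack inequality into oscillation decay. Setting $M:=\sup_{2B\cap\Omega}u$ and $m:=\inf_{2B\cap\Omega}u$, both $M-u$ and $u-m$ are non-negative solutions of $Lv=0$, but now satisfy the \emph{inhomogeneous} Robin conditions $\frac{\partial(M-u)}{\partial\boldsymbol{\nu}}+\alpha(M-u)=\alpha M$ and $\frac{\partial(u-m)}{\partial\boldsymbol{\nu}}+\alpha(u-m)=-\alpha m$ on $2B\cap\partial\Omega$. Applying the weak Harnack inequality to each and treating the boundary data $\alpha M$, $-\alpha m$ as source terms whose contribution scales like $(|M|+|m|)\,r^\tau$ for some $\tau>0$ (again using $\alpha\in L^{n-1+\varepsilon_0}$) produces a one-step geometric decay of the oscillation. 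Iterating this along dyadic scales and summing the perturbation series yields the Hölder estimate $\mathrm{osc}_{\epsilon B\cap\Omega}u\le C\epsilon^\kappa\,\mathrm{osc}_{B\cap\Omega}u$ for some $\kappa\in(0,1]$ depending only on $\Omega$ and $L$.

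The main obstacle is precisely that, unlike for Dirichlet or Neumann, constants are \emph{not} solutions of the homogeneous Robin problem, so the standard oscillation-decay reduction does not apply verbatim: one must treat $\alpha M$ and $-\alpha m$ as genuine boundary sources and show their contribution is a lower-order perturbation in the Moser iteration. The hypothesis $\alpha\in L^{n-1+\varepsilon_0}$ with $\varepsilon_0>0$ is exactly what makes this possible. Much of the detailed machinery for both steps can be imported from the boundary regularity analysis for the Robin Laplacian on Lipschitz domains given by Lanzani and Shen \cite{ls05} (see also \cite{dl21,yyy21}), whose argument extends to the present uniformly elliptic, one-sided CAD setting with only minor modifications.
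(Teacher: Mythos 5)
Part (i) of your sketch is fine, and it is worth saying up front that the paper does not actually prove this lemma: it is quoted as "essentially established in \cite[Lemma 4.1]{V12} and \cite[Theorem 4.5]{ddemm24}", so any self-contained argument is by necessity a different route. The problem is that your argument for (ii) does not close at exactly the point you yourself flag as the main obstacle. Writing $M:=\sup_{2B\cap\Omega}u$, $m:=\inf_{2B\cap\Omega}u$, the boundary sources $\alpha M$ and $-\alpha m$ have size comparable to $\sup_{2B\cap\Omega}|u|$, \emph{not} to $\mathrm{osc}_{2B\cap\Omega}u$. A Moser/oscillation-decay iteration that treats them perturbatively, with the $L^{n-1+\varepsilon_0}$ hypothesis giving a factor $r^{\tau}$, $\tau=1-\frac{n-1}{n-1+\varepsilon_0}$, per scale, yields at best
\begin{equation*}
\underset{\epsilon B\cap\Omega}{\mathrm{osc}}\,u\lesssim \epsilon^{\kappa}\left[\underset{B\cap\Omega}{\mathrm{osc}}\,u
+r^{\tau}\|\alpha\|_{L^{n-1+\varepsilon_0}(2B\cap\partial\Omega,\sigma)}\sup_{2B\cap\Omega}|u|\right],
\end{equation*}
and the extra term cannot simply be "summed away": for Robin solutions $\sup|u|$ is not controlled by the oscillation (if $\alpha\equiv0$ on $2B\cap\partial\Omega$ any large constant solves the problem), and even when $\alpha\not\equiv0$ the source term need not be dominated by the oscillation. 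For the exact solution $u=a(1+\alpha x_n)$ on a flat boundary piece with constant $\alpha$, the one-step perturbation $\sim\alpha|a|\rho$ is \emph{comparable} to $\mathrm{osc}_{B_\rho}u$, so it cannot be absorbed by choosing constants small; and if $\alpha$ is concentrated on a boundary patch of scale $s\ll r$, the term $r^{\tau}\|\alpha\|_{L^{n-1+\varepsilon_0}(\Delta_{2r})}\sup|u|$ exceeds the induced oscillation by a factor of order $(r/s)^{\tau}$, so the weaker inequality above genuinely does not imply the stated one.

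To obtain the pure oscillation-decay estimate one needs an additional mechanism showing that the strength of the Robin source is itself constrained by the oscillation — for instance, testing the weak formulation with cutoffs to get a bound of the type $\sup_{2B\cap\Omega}|u|\int_{B\cap\partial\Omega}\alpha\,d\sigma\lesssim r^{n-2}\,\mathrm{osc}_{2B\cap\Omega}u$ and running the iteration with $L^1$-type (potential) control of the boundary data, or the dichotomy-type arguments of \cite{ddemm24}, which is exactly why the present paper invokes \cite[Theorem 4.5]{ddemm24} and \cite[Lemma 4.1]{V12} rather than reproving the estimate. Two smaller points: your weak Harnack inequality with the boundary term discarded requires the sign conditions $M\ge0$ and $m\le0$ (otherwise the sources are not nonnegative), and importing the Lanzani--Shen machinery, which is built on layer potentials on Lipschitz domains, "with only minor modifications" to uniformly elliptic operators on one-sided CADs is optimistic; in this generality the correct references are \cite{V12,ddemm24}.
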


Lemma \ref{Holder}, essentially established in \cite[Lemma 4.1]{V12} and \cite[Theorem 4.5]{ddemm24},
presents the H\"older regularity estimate for weak solutions to the Robin problem.

Next, we recall some properties of Green's function for the Robin problem.

\begin{lemma}\label{Green}
Let $n\ge2$, $\Omega\subset\mathbb{R}^n$ be a bounded one-sided {\rm CAD}, $L:=-\mathrm{div}(A\nabla\cdot)$ be a uniformly elliptic operator,
and $\alpha$ be the same as in \eqref{e1.3}. Then there exists a non-negative function $G_R$ defined on $\Omega\times\Omega$,
called Green's function for the Robin problem, such that the following properties hold.
\begin{enumerate}[{\rm (i)}]
\item For any $x\in\Omega$ and $r\in(0,\mathrm{diam\,}(\Omega))$,
$$
G_R(\cdot,x)\in W^{1,2}(\Omega\setminus B(x,r))\cap W^{1,1}(\Omega).
$$
\item For any $x\in\Omega$ and any $\psi\in C^{\infty}(\Omega)$,
\begin{equation}\label{E2.9}
\int_{\Omega}A(y)\nabla G_R(y,x)\cdot\nabla\psi(y)\,dy+\int_{\partial\Omega}\alpha (y) G_R(y,x)\psi(y)\,d\sigma(y)=\psi(x).
\end{equation}
\item For any $f\in C_{\rm c}(\partial\Omega)$, the solution $u$ to the Robin problem \eqref{e1.6} can be represented as,
for any $x\in\Omega$,
\begin{equation}\label{e2.10}
u(x)=\int_{\partial\Omega}G_R(y,x)f(y)\,d\sigma(y);
\end{equation}
Moreover, for any $h\in L_{\rm c}^{\infty}(\Omega)$ and $\boldsymbol{F}\in L_{\rm c}^{\infty}(\Omega;\mathbb{R}^n)$, the solution $u$ to the
Robin problem \eqref{e1.8} can be represented by, for any $x\in\Omega$,
\begin{equation}\label{e2.11}
u(x)=\int_{\Omega}\left[G_R(x,y)h(y)+\nabla_{y}G_R(x,y)\cdot\boldsymbol{F}(y)\right]\,dy.
\end{equation}

\item There exists a positive constant $C$, depends only on $\Omega$ and $L$, such that, for any $x,y\in\Omega$ with $x\neq y$,
\begin{equation}\label{e2.12}
0 \le G(x,y) \le \begin{cases}
C\left[ 1 + \log\left( \dfrac{1}{|x-y|} \right) \right],&\text{if }n=2,\\
\dfrac{C}{|x-y|^{n-2}},&\text{if }n\ge 3.
\end{cases}
\end{equation}

\item When $n\ge3$, there exists a positive constant $C$, depends only
on $\Omega$, $n$, and $L$, such that, for any ball $B:=B(z,r)$,
with $z\in\partial\Omega$ and $r\in(0,\mathrm{diam\,}(\Omega))$, and for any $y\in\Omega\setminus \frac{3}{2}B$,
$$
\left[\fint_{B\cap\Omega}|G_R(x,y)|^2\,dx\right]^{\frac12}\le Cr^{2-n}\ \ \text{and}\ \ \left[\fint_{B\cap\Omega}
\left|\nabla_xG_R(x,y)\right|\,dx\right]^{\frac12}\le Cr^{1-n}.
$$
\end{enumerate}
\end{lemma}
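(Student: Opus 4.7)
The plan is to construct $G_R$ by a Gr\"uter--Widman type approximation procedure, replicating the classical construction of Green's function for the Dirichlet problem but using the interior and boundary estimates of Lemmas \ref{Cacciopoli}, \ref{Moser}, and \ref{Holder} in place of the zero boundary condition. Non-negativity of $G_R$ will follow from the weak maximum principle for the Robin problem, which is available because $\alpha \ge 0$ and the associated energy form is coercive (see Remark \ref{r1.1}).

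First, for each fixed $x \in \Omega$ and each $\varepsilon \in (0, \delta(x)/4)$, I would use Lax--Milgram (Remark \ref{r1.1}) to produce an approximant $G_R^{\varepsilon}(\cdot, x) \in W^{1,2}(\Omega)$ solving the Robin problem with right-hand side $\rho_{x,\varepsilon} := |B(x,\varepsilon)|^{-1} \mathbf{1}_{B(x,\varepsilon)}$ and zero boundary data. To obtain uniform $\varepsilon$-estimates, I would combine Lemma \ref{Cacciopoli} away from the pole with the Moser-type bounds of Lemma \ref{Moser} to derive the Gr\"uter--Widman pointwise bound $G_R^{\varepsilon}(y,x) \le C|y-x|^{2-n}$ for $n \ge 3$, and the logarithmic analogue for $n=2$, uniformly in $\varepsilon$. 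The standard dyadic annular decomposition $\{2^{-k}\,\mathrm{diam}\,\Omega < |y-x| \le 2^{-k+1}\,\mathrm{diam}\,\Omega\}$ together with Caccioppoli on each annulus then yields uniform $W^{1,1}(\Omega)$ and $W^{1,2}(\Omega\setminus B(x,r))$ bounds; extracting a weak subsequential limit produces $G_R(\cdot, x)$ satisfying (i), passing to the limit in the variational identity for $G_R^{\varepsilon}$ yields (ii), and the pointwise estimate (iv) survives the limit. For the representation formulas in (iii), I would test (ii) against $\psi = u$ where $u$ solves \eqref{e1.6} or \eqref{e1.8}; for the latter formula the symmetry $G_R^{L}(x,y) = G_R^{L^\ast}(y,x)$ is needed, which can be established by cross-testing the approximants for $L$ and $L^\ast$ and passing to the limit. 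For (v), the key observation is that, when $y \notin \tfrac{3}{2}B$, the function $G_R(\cdot, y)$ solves the homogeneous Robin equation for $L^\ast$ in $\tfrac{3}{2}B \cap \Omega$; since $|x-y| \sim r$ on $B \cap \Omega$, the $L^2$-bound is immediate from (iv), and the gradient bound follows from Lemma \ref{Cacciopoli}(ii) applied to $G_R(\cdot, y)$ on $B \cap \Omega$ with the right-hand side controlled by the $L^2$-bound just derived.

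The main obstacle is establishing the uniform pointwise bound (iv) in the presence of the Robin boundary term. In the Dirichlet setting, Gr\"uter--Widman's iteration relies on truncating solutions at the boundary without incurring boundary error terms; in the Robin case the analogous truncation $(G_R^{\varepsilon} - k)_+$ produces an extra integral $\int_{\partial\Omega} \alpha (G_R^{\varepsilon} - k)_+^2 \, d\sigma$ which, since $\alpha \ge 0$, appears with a favourable sign in each energy inequality and can simply be dropped. Combined with the boundary Caccioppoli and Moser estimates of Lemmas \ref{Cacciopoli}(ii) and \ref{Moser}(ii), which are already established in the paper, this reduces the Robin case to the Dirichlet-type iteration, after which the rest of the proof proceeds by standard arguments.
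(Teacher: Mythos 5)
Your proposal is correct in outline, but it takes a genuinely different route from the paper: the paper does not construct $G_R$ at all. It imports the existence of $G_R$, parts (i), (ii), the boundary representation \eqref{e2.10}, and the pointwise bound \eqref{e2.12} from \cite[Theorem 5.6]{ddemm24}, \cite[Theorem 1.5]{wy25}, and \cite[Theorem 4.1]{CK14}, and only proves two things itself: the interior representation \eqref{e2.11}, obtained by cross-testing \eqref{E2.9} against the weak formulation \eqref{e1.5} (with the adjoint relation of Remark \ref{r2.1}), and (v), deduced from \eqref{e2.12} together with Lemmas \ref{Cacciopoli} and \ref{Moser}. Your Gr\"uter--Widman-type construction (mollified data, truncation with the Robin boundary term dropped thanks to $\alpha\ge0$, dyadic annuli, weak limits) is essentially the scheme carried out in the cited references --- \cite{CK14} treats exactly the Robin-type case --- so what it buys is self-containedness at the cost of redoing known work, while on the two points the paper actually argues ((\ref{e2.11}) and (v)) your treatment coincides with the paper's; you were also right to flag the adjoint symmetry $G_R^{L}(x,y)=G_R^{L^*}(y,x)$ as the ingredient needed for \eqref{e2.11} when $A$ is non-symmetric. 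If you do carry the construction out, three points deserve more care than your sketch gives them: passing to the limit in the boundary term $\int_{\partial\Omega}\alpha G_R^{\varepsilon}\psi\,d\sigma$ requires uniform control of the traces of the approximants (a trace embedding on one-sided CADs with Ahlfors regular boundary), which you do not mention; in (v) the pole $y$ is only assumed outside $\tfrac{3}{2}B$, so $G_R(\cdot,y)$ is a homogeneous Robin solution in $\tfrac{3}{2}B\cap\Omega$ but not necessarily in $2B\cap\Omega$, and Lemma \ref{Cacciopoli}(ii) must therefore be applied at a slightly smaller scale or via a covering; and the $n=2$ logarithmic bound requires a genuinely different and more delicate argument than the $n\ge3$ iteration, which your plan only gestures at.
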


\begin{proof}
The existence of Green's function $G_R$, (i), (ii), and \eqref{e2.10}
was obtained in \cite[Theorem 5.6]{ddemm24} and \cite[Theorem 1.5]{wy25}.
Moreover, the formula \eqref{e2.11} is a direct consequence of \eqref{E2.9}
and \eqref{e1.5}. Thus, (iii) holds.

Furthermore, (iv) of Lemma \ref{Green} was essentially obtained in \cite[Theorem 4.1]{CK14} (see also \cite[Theorem 1.5]{wy25}).
Using the pointwise upper bound estimate \eqref{e2.12} and Lemmas \ref{Cacciopoli} and \ref{Moser}, we further conclude that the conclusion
of (v) holds. This finishes the proof of Lemma \ref{Green}.
\end{proof}

\begin{remark}\label{r2.1}
Let $L$ and $G_R$ be as in Lemma \ref{Green}. Let $L^\ast:=-\mathrm{div}(A^T\nabla\cdot)$ be the adjoint operator of $L$ and
denote by $G^T_R$ Green's function for the Robin problem
associated with $L^\ast$. Then it is easy to find that,
for any $x,y\in\Omega$, $G^T_R(x,y)=G_R(y,x)$.
\end{remark}

Moreover, we also need the following Poincar\'e's inequality
(see, for instance, \cite[Lemma 2.2]{ddemm24}).

\begin{lemma}\label{l2.2}
Let $n\ge2$ and $\Omega\subset\mathbb{R}^n$ be a bounded one-sided {\rm CAD}.
Then, for any $x\in\partial\Omega$, $r\in(0,\mathrm{dist\,}(\Omega))$,
and $u\in W^{1,2}(\Omega)$,
$$
\left[\fint_{B(x,r)\cap\Omega}|u-\overline{u}_E|^2\,dy\right]^{\frac12}
\le Cr\left[\fint_{B(x,2r)\cap\Omega}|\nabla u|^2\,dy\right]^{\frac12},
$$
where $\overline{u}_E:=\fint_E u\,dy$ with $E\subset B(x,2r)\cap\Omega$
satisfying $|E|\ge c|B(x,r)\cap\Omega|$ for some $c\in(0,1]$ and $C$ is a positive constant depending only on $c$ and the geometric constants of $\Omega$.
\end{lemma}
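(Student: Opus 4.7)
The plan is to exploit the two quantitative geometric features of a one-sided CAD, namely the corkscrew and Harnack chain conditions from Definition \ref{CSandHC}, to reduce the boundary Poincar\'e inequality to the classical interior one on Euclidean balls. First, by the $c$-corkscrew condition applied to $x\in\partial\Omega$ and radius $r$, I would select a corkscrew point $y^\ast\in B(x,r)\cap\Omega$ with $B(y^\ast, cr)\subset B(x,r)\cap\Omega$. The interior Poincar\'e inequality on the Euclidean ball $B(y^\ast,cr)$ then controls $\|u-\overline{u}_{B(y^\ast,cr)}\|_{L^2(B(y^\ast,cr))}$ by $Cr\|\nabla u\|_{L^2(B(y^\ast,cr))}$, and this ball will serve as a fixed ``reference'' piece whose average is comparable, through a chaining argument, to both $\overline{u}_E$ and the average of $u$ on all of $B(x,r)\cap\Omega$.

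Next, I would perform a Whitney decomposition of $B(x,2r)\cap\Omega$ into subballs $\{B_j\}$ with $\mathrm{diam}(B_j)\sim\mathrm{dist}(B_j,\partial\Omega)$ and $2B_j\subset\Omega$. On each Whitney ball the interior Poincar\'e inequality provides $\|u-\overline{u}_{B_j}\|_{L^2(B_j)}\lesssim\mathrm{diam}(B_j)\|\nabla u\|_{L^2(2B_j)}$. To telescope the difference $\overline{u}_{B_j}-\overline{u}_{B(y^\ast,cr)}$, the Harnack chain condition yields, for each $j$, a chain of at most $N_j\lesssim 1+\log(1+r/\delta(B_j))$ uniformly nondegenerate balls linking $B_j$ to $B(y^\ast,cr)$; summing these telescoped differences after a Cauchy--Schwarz step produces a bounded-overlap sum of $\|\nabla u\|_{L^2}$ terms over dilates of the Whitney cubes, and the resulting Boman-type chain inequality controls $\|u-\overline{u}_{B(y^\ast,cr)}\|_{L^2(B(x,2r)\cap\Omega)}$ by $Cr\|\nabla u\|_{L^2(B(x,2r)\cap\Omega)}$ (this is the ``John domain implies Poincar\'e'' mechanism, which is available here because corkscrew plus Harnack chain forces $B(x,2r)\cap\Omega$ to be a John domain with uniform constants).

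Finally, to replace $\overline{u}_{B(y^\ast,cr)}$ by the general set-average $\overline{u}_E$, I would use the volume hypothesis $|E|\ge c|B(x,r)\cap\Omega|\gtrsim r^{n}$ together with Jensen's inequality:
\begin{align*}
\left|\overline{u}_E-\overline{u}_{B(y^\ast,cr)}\right|^2
\le\frac{2}{|E|}\int_{E}|u-\overline{u}_{B(y^\ast,cr)}|^2\,dy
\lesssim\frac{1}{|B(x,r)\cap\Omega|}\int_{B(x,2r)\cap\Omega}|u-\overline{u}_{B(y^\ast,cr)}|^2\,dy,
\end{align*}
which, combined with the chain estimate above, yields the required bound after one more triangle inequality. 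The main technical obstacle I expect is the chaining step: one must ensure that the Harnack chain lengths grow only logarithmically in $r/\delta(B_j)$ and that the resulting overlaps of enlarged Whitney balls remain bounded, so that the sum does not blow up. The volume hypothesis on $E$ is essential here---it prevents $E$ from concentrating in a thin boundary layer where the comparison $|\overline{u}_E-\overline{u}_{B(y^\ast,cr)}|$ could not be absorbed into $\|\nabla u\|_{L^2(B(x,2r)\cap\Omega)}$ with the correct scaling.
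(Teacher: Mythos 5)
The paper does not actually prove Lemma \ref{l2.2}: it is quoted from \cite[Lemma 2.2]{ddemm24}, so there is no internal argument to compare with. Your mechanism --- a corkscrew reference ball, interior Poincar\'e inequalities on Whitney pieces, a Boman-type chaining via Harnack chains, and a final Jensen step using $|E|\ge c|B(x,r)\cap\Omega|\gtrsim r^{n}$ to replace the reference average by $\overline{u}_E$ --- is the standard route for Poincar\'e inequalities on uniform domains, and the last (Jensen) step is correct as written. So in spirit your proposal is the right kind of proof, and presumably close to the one in the cited reference.

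There is, however, a genuine gap in the chaining step: the claim that the chains connecting the Whitney pieces to $B(y^\ast,cr)$ stay inside $B(x,2r)\cap\Omega$, equivalently that ``corkscrew plus Harnack chain forces $B(x,2r)\cap\Omega$ to be a John domain with uniform constants.'' This is not true: for a one-sided CAD, $B(x,2r)\cap\Omega$ need not even be connected (consider two parallel tubes of width $\sim r$ separated by a gap of width $\sim r$ and joined only by a bend at distance $\gg r$ from $x$; this is a uniform domain with moderate constants, yet $B(x,2r)\cap\Omega$ has two components, both meeting $B(x,r)$, and no chain between them exists inside $2B$). What corkscrew plus Harnack chains (equivalently, the cigar/uniformity condition) actually yield is that chains between points of $B(x,r)\cap\Omega$ stay in $B(x,Cr)$ for a geometric constant $C$ that can be much larger than $2$; hence your argument, as written, produces the estimate with $\fint_{B(x,Cr)\cap\Omega}|\nabla u|^2$ on the right-hand side, and reaching the dilate $2$ of the statement is precisely the delicate point that your sketch assumes away rather than addresses. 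Two smaller issues: the bound $N_j\lesssim 1+\log(1+r/\delta(B_j))$ is not contained in Definition \ref{CSandHC}(c), which only makes $N$ an unspecified function of the ratio, so the geometric growth of chain radii and the bounded overlap of the enlarged Whitney balls needed for the Boman summation must be derived from the uniform-domain (cigar) characterization rather than asserted; and the factor $2$ in your Jensen display is superfluous (harmless). In short: correct skeleton, but the containment of the chains in $B(x,2r)$ is a step that would fail and needs either a larger dilate in the conclusion or a genuinely additional argument.
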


Let $n\ge2$ and $\Omega\subset\mathbb{R}^n$ be a one-sided {\rm CAD}. Assume that $f\in L^1_{\mathrm{loc}}(\partial\Omega,\sigma)$.
The \emph{Hardy--Littlewood maximal function} $\mathcal{M}(f)$ of $f$ is defined by setting, for any $x\in\partial\Omega$,
$$
\mathcal{M}(f)(x) :=\sup_{B\ni x}\frac{1}{\sigma(B\cap\partial\Omega)}\int_{B\cap\partial\Omega}|f|\,d\sigma,
$$
where the supremum is taken over all balls $B$, centered at the boundary $\partial\Omega$, containing $x\in\partial\Omega$.

Then we have the following boundedness of the Hardy--Littlewood maximal function $\mathcal{M}$ (see, for example, \cite{s93}).
\begin{lemma}\label{l2.1}
Let $n\ge2$, $\Omega\subset\mathbb{R}^n$ be a one-sided {\rm CAD}, and $p\in(1,\infty)$.
Then the Hardy--Littlewood maximal operator $\mathcal{M}$ is bounded on $L^p(\partial\Omega,\sigma)$
and bounded from $L^1(\partial\Omega,\sigma)$ to $WL^{1}(\partial\Omega,\sigma)$.
Here the weak Lebesgue space $WL^{1}(\partial\Omega,\sigma)$
is defined by setting
$$WL^1(\Omega,\sigma):=\left\{f\ \text{is measurable on}\ \partial\Omega:\ \|f\|_{WL^1(\partial\Omega,\sigma)}<\infty\right\},
$$
where, for any measurable function $f$ on $\partial\Omega$,
$$\|f\|_{WL^1(\partial\Omega,\sigma)}:=\sup_{\lambda\in(0,\infty)}
\lambda\sigma\left(\left\{x\in\partial\Omega:\ |f(x)|>\lambda\right\}\right).$$
\end{lemma}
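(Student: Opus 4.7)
The plan is to reduce the claim to the classical theory of the Hardy--Littlewood maximal operator on spaces of homogeneous type. The first step is to observe that, since $\Omega$ is a one-sided CAD, the surface measure $\sigma:=\mathcal{H}^{n-1}|_{\partial\Omega}$ is $(n-1)$-Ahlfors regular. In particular, for any $x\in\partial\Omega$ and $r\in(0,\mathrm{diam\,}(\partial\Omega))$, one has $\sigma(B(x,2r)\cap\partial\Omega)\le C\sigma(B(x,r)\cap\partial\Omega)$ for some positive constant $C$ independent of $x$ and $r$. Hence $(\partial\Omega,|\cdot|,\sigma)$ is a space of homogeneous type in the sense of Coifman and Weiss, which is exactly the setting in which the Vitali-type covering machinery of \cite{s93} applies.

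The second step is to prove the weak-type $(1,1)$ bound. Given $f\in L^1(\partial\Omega,\sigma)$ and $\lambda\in(0,\infty)$, let $E_\lambda:=\{y\in\partial\Omega:\mathcal{M}(f)(y)>\lambda\}$. For each $x\in E_\lambda$, by definition of $\mathcal{M}$, there exists a ball $B_x$ centered on $\partial\Omega$ with $x\in B_x$ such that
\[
\frac{1}{\sigma(B_x\cap\partial\Omega)}\int_{B_x\cap\partial\Omega}|f|\,d\sigma>\lambda.
\]
Applying a Vitali-type covering lemma to the family $\{B_x\}_{x\in E_\lambda}$, I would extract a (countable) pairwise disjoint subfamily $\{B_{x_j}\}_j$ whose fivefold dilations cover $E_\lambda$. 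Using the doubling property above together with the pairwise disjointness, I then estimate
\[
\sigma(E_\lambda)\le\sum_j\sigma(5B_{x_j}\cap\partial\Omega)\lesssim\sum_j\sigma(B_{x_j}\cap\partial\Omega)\le\sum_j\frac{1}{\lambda}\int_{B_{x_j}\cap\partial\Omega}|f|\,d\sigma\le\frac{\|f\|_{L^1(\partial\Omega,\sigma)}}{\lambda},
\]
which is the desired weak-type $(1,1)$ estimate and yields the boundedness of $\mathcal{M}$ from $L^1(\partial\Omega,\sigma)$ to $WL^1(\partial\Omega,\sigma)$.

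For the final step, I would combine this weak endpoint estimate with the trivial pointwise bound $\mathcal{M}(f)(x)\le\|f\|_{L^\infty(\partial\Omega,\sigma)}$, which shows that $\mathcal{M}$ is bounded on $L^\infty(\partial\Omega,\sigma)$. The Marcinkiewicz interpolation theorem then gives the boundedness of $\mathcal{M}$ on $L^p(\partial\Omega,\sigma)$ for every $p\in(1,\infty)$, completing the proof.

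The only conceptual obstacle is producing the Vitali covering in the abstract homogeneous setting (rather than in $\mathbb{R}^n$ directly), but the Ahlfors regularity of $\sigma$ makes this entirely routine and indeed constitutes the standard textbook argument for which the authors cite Stein \cite{s93}; no further ingredient specific to the Robin problem is needed here.
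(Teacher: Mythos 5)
Your proposal is correct and is exactly the standard argument the paper has in mind: the paper gives no proof of this lemma, citing Stein \cite{s93}, and your route (Ahlfors regularity $\Rightarrow$ doubling $\Rightarrow$ Vitali covering for the weak $(1,1)$ bound, then interpolation with the trivial $L^\infty$ bound) is that textbook proof. No discrepancy to report.
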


\section{Proofs of Theorems \ref{PRandPRR} and \ref{StrongLocWeak}\label{s3}}

In this section, we give the proofs of Theorems \ref{PRandPRR} and \ref{StrongLocWeak}. We first show the equivalence
between the (weak) Poisson--Robin problem and the (weak) Poisson--Robin-regularity problem
by using the duality between the modified non-tangential maximal function $\widetilde{\mathcal{N}}$ and the averaged Carleson functional $\widetilde{\mathcal{C}}_1$.

\begin{proof}[Proof of Theorem \ref{PRandPRR}]
We first prove (i). We only need to prove $(a)\Rightarrow(b)$ and $(c)\Rightarrow(a)$, because $(b)\Rightarrow(c)$ is trivial.

We now show $(c)\Rightarrow(a)$. Let $\boldsymbol{F}\in L_{\rm c}^{\infty}(\Omega;\mathbb{R}^n)$ and $u$ be the weak solution to the Robin problem
\begin{equation}\label{e3.1}
\begin{cases}
L^*u=-\mathrm{div\,}\boldsymbol{F}\ \ & \text{in}\ \ \Omega,\\
\displaystyle\frac{\partial u}{\partial\boldsymbol{\nu}}+\alpha  u=\boldsymbol{F}\cdot\boldsymbol{\nu} \ \ & \text{on}\ \ \partial\Omega.
\end{cases}
\end{equation}
To prove $(c)\Rightarrow(a)$, it suffices to show
\begin{equation}\label{e3.2}
\left\|\widetilde{\mathcal{N}}(u)\right\|_{L^{p'}(\partial\Omega,\sigma)}\lesssim
\left\|\widetilde{\mathcal{C}}_1(\delta|\boldsymbol{F}|)\right\|_{L^{p'}(\partial\Omega,\sigma)}.
\end{equation}
Let $E$ be a compact set in $\Omega$. Then $\|\widetilde{\mathcal{N}}(u\mathbf{1}_E)\|_{L^{p'}(\partial\Omega,\sigma)}<\infty$.
By the duality \eqref{e2.2} and the density of $L^\infty_{\rm c}(\Omega)$ in
$\widetilde{T}^p_1(\Omega)$ (see Lemma \ref{Dense}), we conclude that
there exists $h:=h_E\in L_{\rm c}^{\infty}(\Omega)$ such that
\begin{equation}\label{e3.3}
\left\|\widetilde{\mathcal{C}}_1(\delta h)\right\|_{L^{p}(\partial\Omega,\sigma)}\lesssim1
\end{equation}
and
\begin{equation}\label{e3.4}
\left\|\widetilde{\mathcal{N}}\left(u\mathbf{1}_E\right)\right\|_{L^{p'}
(\partial \Omega,\sigma)}\lesssim \int_{\Omega}uh\,dx.
\end{equation}
Let $v$ be the weak solution to the Robin problem
\begin{equation*}
\begin{cases}
Lv=h\ \ & \text{in}\ \ \Omega,\\
\displaystyle\frac{\partial v}{\partial\boldsymbol{\nu}}+\alpha  v=0 \ \ & \text{on}\ \ \partial\Omega.
\end{cases}
\end{equation*}
Then we have
\begin{align*}
\int_{\Omega}h u\,dx
&=\int_{\partial\Omega}\alpha  v u\,d\sigma+\int_{\Omega}A\nabla v\cdot\nabla u\,dx\\ \nonumber
&=\int_{\partial\Omega}\alpha  u v\,d\sigma+\int_{\Omega} A^T\nabla u\cdot\nabla v\,dx=\int_{\Omega}\boldsymbol{F}\cdot\nabla v\,dx,
\end{align*}
which, combined with \eqref{e2.1}, the solvability of $({\rm PRR}_p)_L$,  \eqref{e3.3}, and \eqref{e3.4}, further yields that
\begin{align}\label{e3.5}
\left\|\widetilde{\mathcal{N}}\left(u\mathbf{1}_E\right)\right\|_{L^{p'}
(\partial \Omega,\sigma)}
&\lesssim\left\|\widetilde{\mathcal{C}}_1(\delta\boldsymbol{F})\right\|_{L^{p'}(\partial\Omega,\sigma)}
\left\|\widetilde{\mathcal{N}}(\nabla v)\right\|_{L^{p}(\partial\Omega,\sigma)}\nonumber\\ \nonumber
&\lesssim\left\|\widetilde{\mathcal{C}}_1(\delta\boldsymbol{F})\right\|_{L^{p'}(\partial\Omega,\sigma)}
\left\|\widetilde{\mathcal{C}}_1(\delta h)\right\|_{L^{p}(\partial\Omega,\sigma)}\\
&\lesssim\left\|\widetilde{\mathcal{C}}_1(\delta\boldsymbol{F})\right\|_{L^{p'}(\partial\Omega,\sigma)}.
\end{align}
Since the right-hand side of the last inequality in \eqref{e3.5}
is independent of $E$, by taking $E\uparrow\Omega$, it follows from
\eqref{e3.5} that \eqref{e3.2} holds.
Therefore, $({\rm PR}_{p'})_{L^*}$ is solvable and hence
the conclusion of (a) holds.

Next, we prove $(a)\Rightarrow(b)$. Let $h\in L_{\rm c}^{\infty}(\Omega)$,
$\boldsymbol{F}\in L_{\rm c}^{\infty}(\Omega;\mathbb{R}^n)$, and $u$ be the weak solution to
the Robin problem
\begin{equation*}
\begin{cases}
Lu=h-\mathrm{div\,}\boldsymbol{F}\ \ & \text{in}\ \ \Omega,\\
\displaystyle\frac{\partial u}{\partial\boldsymbol{\nu}}+\alpha  u=f+\boldsymbol{F}\cdot\boldsymbol{\nu} \ \ & \text{on}\ \ \partial\Omega.
\end{cases}
\end{equation*}
To prove $(a)\Rightarrow(b)$, it suffices to show
\begin{equation}\label{e3.6}
\left\|\widetilde{\mathcal{N}}(\nabla u)\right\|_{L^{p}(\partial \Omega,\sigma)}
\lesssim\left\|\widetilde{\mathcal{C}}_1(\delta h)\right\|_{L^{p}(\partial\Omega,\sigma)}+
\left\|\widetilde{\mathcal{C}}_1(\boldsymbol{F})\right\|_{L^{p}(\partial\Omega,\sigma)}.
\end{equation}
Let $E$ be a compact set in $\Omega$.
Then $\|\widetilde{\mathcal{N}}(\nabla u\mathbf{1}_E)\|_{L^{p}(\partial\Omega,\sigma)}<\infty$
and there exists $\boldsymbol{G}:=\boldsymbol{G}_E\in L_{\rm c}^{\infty}(\Omega;\mathbb{R}^n)$
such that
\begin{equation}\label{e3.7}
\left\|\widetilde{\mathcal{C}}_1(\delta\boldsymbol{G})\right\|_{L^{p'}(\partial\Omega,\sigma)}\lesssim1
\end{equation}
and
\begin{equation}\label{e3.8}
\left\|\widetilde{\mathcal{N}}(\nabla u\mathbf{1}_E)\right\|_{L^{p}(\partial \Omega,\sigma)}
\lesssim \int_{\Omega}\nabla u\cdot\boldsymbol{G}\,dx.
\end{equation}
Let $v$ be the weak solution to the Robin problem \eqref{e3.1} with
$\boldsymbol{F}$ replaced by $\boldsymbol{G}$.
Then we have
\begin{align*}
\int_{\Omega}\nabla u\cdot\boldsymbol{G}\,dx
&=\int_{\partial\Omega}\alpha  v u\,d\sigma+\int_{\Omega}A^T\nabla v\cdot\nabla u\,dx\\ \nonumber
&=\int_{\partial\Omega}\alpha  v u\,d\sigma+\int_{\Omega} A\nabla u\cdot\nabla v\,dx
=\int_{\Omega}hv+\boldsymbol{F}\cdot\nabla v\,dx.
\end{align*}
From this, \eqref{e2.1}, and \eqref{e3.8}, we deduce that
\begin{align}\label{e3.9}
\left\|\widetilde{\mathcal{N}}(\nabla u\mathbf{1}_E)\right\|_{L^{p}(\partial \Omega,\sigma)}
&\lesssim \left\|\widetilde{\mathcal{C}}_1(\delta h)\right\|_{L^{p}(\partial\Omega,\sigma)}
\left\|\widetilde{\mathcal{N}}(v)\right\|_{L^{p'}(\partial\Omega,\sigma)}\notag\\
&\quad+\left\|\widetilde{\mathcal{C}}_1(\boldsymbol{F})\right\|_{L^{p}(\partial\Omega,\sigma)}
\left\|\widetilde{\mathcal{N}}(\delta\nabla v)\right\|_{L^{p'}(\partial\Omega,\sigma)}.
\end{align}
Moreover, applying Caccioppoli's inequality in Lemma \ref{Cacciopoli}, we find that, for any $x\in\partial\Omega$,
\begin{equation}\label{e3.10}
\widetilde{\mathcal{N}}(\delta\nabla v)(x)\lesssim\widetilde{\mathcal{N}}^*(v)(x)+\widetilde{\mathcal{C}}_1(\delta\boldsymbol{G})(x),
\end{equation}
where $\widetilde{\mathcal{N}}^*$ is a non-tangential maximal function with cones of larger aperture than $\widetilde{\mathcal{N}}$.
Therefore, by \eqref{e3.9}, \eqref{e3.10}, Lemma \ref{TentEqui},
the solvability of $({\rm PR}_{p'})_{L^*}$, and \eqref{e3.7}, we conclude that
\begin{align*}
\left\|\widetilde{\mathcal{N}}(\nabla u\mathbf{1}_E)\right\|_{L^{p}(\partial \Omega,\sigma)}
&\lesssim \left[\left\|\widetilde{\mathcal{C}}_1(\delta h)\right\|_{L^{p}(\partial\Omega,\sigma)}
+\left\|\widetilde{\mathcal{C}}_1(\boldsymbol{F})\right\|_{L^{p}(\partial\Omega,\sigma)}\right]\\ \notag
&\quad\times\left[\left\|\widetilde{\mathcal{N}}(v)\right\|_{L^{p'}(\partial\Omega,\sigma)}
+\left\|\widetilde{\mathcal{C}}_1(\delta\boldsymbol{G})\right\|_{L^{p'}(\partial\Omega,\sigma)}\right]\\ \nonumber
&\lesssim \left[\left\|\widetilde{\mathcal{C}}_1(\delta h)\right\|_{L^{p}(\partial\Omega,\sigma)}
+\left\|\widetilde{\mathcal{C}}_1(\boldsymbol{F})\right\|_{L^{p}(\partial\Omega,\sigma)}\right]
\left\|\widetilde{\mathcal{C}}_1(\delta\boldsymbol{G})\right\|_{L^{p'}(\partial\Omega,\sigma)}\\ \nonumber
&\lesssim \left\|\widetilde{\mathcal{C}}_1(\delta h)\right\|_{L^{p}(\partial\Omega,\sigma)}
+\left\|\widetilde{\mathcal{C}}_1(\boldsymbol{F})\right\|_{L^{p}(\partial\Omega,\sigma)}.
\end{align*}
Letting $E\uparrow\Omega$ in the above inequality, we then find that
\eqref{e3.6} holds. This finishes the proof of $(a)\Rightarrow(b)$
and hence (i).

Now, we show (ii). We only give the proof of $(e)\Rightarrow(d)$ because the proof of $(d)\Rightarrow(e)$ is similar.
Let $\boldsymbol{F}\in L_{\rm c}^{\infty}(\Omega;\mathbb{R}^n)$ and $u$ be the weak solution to the Robin problem \eqref{e3.1}.
From \eqref{e2.2} and Lemma \ref{Dense}, we infer that there exists $\boldsymbol{G}\in L_{\rm c}^{\infty}(\Omega;\mathbb{R}^n)$ such that
\begin{equation}\label{e3.11}
\left\|\widetilde{\mathcal{C}}_1(\delta\boldsymbol{G})\right\|_{L^{p}(\partial\Omega,\sigma)}\lesssim1
\end{equation}
and
\begin{equation}\label{e3.12}
\left\|\widetilde{\mathcal{N}}(\delta\nabla u)\right\|_{L^{p'}(\partial \Omega,\sigma)}
\lesssim \int_{\Omega}\delta\nabla u\cdot\boldsymbol{G}\,dx.
\end{equation}
Let $v$ be a weak solution to the Robin problem
\begin{equation*}
\begin{cases}
Lv=-\mathrm{div\,}(\delta\boldsymbol{G})\ \ & \text{in}\ \ \Omega,\\
\displaystyle\frac{\partial v}{\partial\boldsymbol{\nu}}+\alpha  v=\delta\boldsymbol{G}\cdot\boldsymbol{\nu} \ \ & \text{on}\ \ \partial\Omega.
\end{cases}
\end{equation*}
Then we have
\begin{align*}
\int_{\Omega}\delta\nabla u\cdot\boldsymbol{G}\,dx
&=\int_{\Omega}\nabla u\cdot\delta\boldsymbol{G}\,dx\\ \nonumber
&=\int_{\Omega}A\nabla v\cdot\nabla u\,dx+\int_{\partial\Omega}\alpha  vu\,d\sigma\\ \nonumber
&=\int_{\Omega} A^T\nabla u\cdot\nabla v\,dx+\int_{\partial\Omega}\alpha  vu\,d\sigma
=\int_{\Omega}\boldsymbol{F}\cdot\nabla v\,dx,
\end{align*}
which, together with \eqref{e2.1}, the solvability of $({\rm wPRR}_p)_L$, \eqref{e3.11}, and \eqref{e3.12}, further yields that
\begin{align*}
\left\|\widetilde{\mathcal{N}}(\delta\nabla u)\right\|_{L^{p'}(\partial \Omega,\sigma)}
&\lesssim\left\|\widetilde{\mathcal{C}}_1(\delta\boldsymbol{F})\right\|_{L^{p'}(\partial\Omega,\sigma)}
\left\|\widetilde{\mathcal{N}}(\nabla v)\right\|_{L^{p}(\partial\Omega,\sigma)}\\ \nonumber
&\lesssim\left\|\widetilde{\mathcal{C}}_1(\delta\boldsymbol{F})\right\|_{L^{p'}(\partial\Omega,\sigma)}
\left\|\widetilde{\mathcal{C}}_1(\delta\boldsymbol{G})\right\|_{L^{p}(\partial\Omega,\sigma)}
\lesssim\left\|\widetilde{\mathcal{C}}_1(\delta\boldsymbol{F})\right\|_{L^{p'}(\partial\Omega,\sigma)}.
\end{align*}
Therefore, the problem $({\rm wPR}_{p'})_{L^*}$ is solvable.
This finishes the proof of (ii) and hence Theorem \ref{PRandPRR}.
\end{proof}

Next, we study the relationship among the Poisson--Robin problem, the localization property, and the weak Poisson--Robin problem,
that is, to prove Theorem \ref{StrongLocWeak}.
For this purpose, we first establish the following three successive lemmas.

\begin{lemma}\label{strongerLoc}
Let $n\ge2$, $\Omega\subset\mathbb{R}^n$ be a bounded one-sided {\rm CAD}, $p\in(1,\infty)$, $p'$ be the H\"older conjugate of $p$,
$L:=-\mathrm{div}(A\nabla\cdot)$ be a uniformly elliptic operator, and $\alpha$ satisfy \eqref{e1.3} and $\alpha\in L^p(\partial\Omega,\sigma)$ with $\|\alpha \|_{L^p(B(z,r)\cap\partial\Omega,\sigma)}\le Cr^{(n-1)/p}$ for any $z\in\partial\Omega$ and $r\in(0,\mathrm{diam\,}(\Omega))$.
If $({\rm PR}_{p'})_{L^*}$ is solvable, then
there exists a positive constant $C$ such that, for any ball $B:=B(x,r)$ with $x\in\partial\Omega$ and $r\in(0,\mathrm{diam\,}(\Omega))$
and for any local solution $u\in W^{1,2}(2B\cap\Omega)$
to $Lu=0$ in $2B\cap\Omega$ with the Robin boundary condition $\frac{\partial u}{\partial\boldsymbol{\nu}}+\alpha  u=0$ on $2B\cap\partial\Omega$,
\begin{equation}\label{e3.13}
\left\|\widetilde{\mathcal{N}}(|\nabla u|\mathbf{1}_B)\right\|_{L^{p}(\partial \Omega,\sigma)}
\le C r^{\frac{n-1}{p}}\left(\fint_{2B\cap\Omega}|\nabla u|\,dx+\fint_{2B\cap\Omega}|u|\,dx\right).
\end{equation}
\end{lemma}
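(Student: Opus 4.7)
The plan is to extend the local solution $u$ to a global weak solution on $\Omega$ by means of a cutoff, then invoke $(\mathrm{PRR}_p)_L$ (which, by Theorem \ref{PRandPRR}(i), is equivalent to the hypothesis $(\mathrm{PR}_{p'})_{L^*}$) and control the resulting Carleson term by $L^1$ averages of $|\nabla u|$ and $|u|$.

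First, I would fix $\eta\in C^{\infty}_{\rm c}(\mathbb{R}^n)$ with $\eta\equiv 1$ on $B$, $\mathrm{supp}\,\eta\subset \frac{3}{2}B$, and $|\nabla\eta|\lesssim 1/r$, and set $v:=\eta u$, extended by zero outside $\frac{3}{2}B$, so that $v\in W^{1,2}(\Omega)$. Using the local weak formulation of $u$ tested against $\eta\phi$ for $\phi\in C^{\infty}_{\rm c}(\mathbb{R}^n)$, a direct calculation shows that $v$ is the unique weak solution of
\[
Lv = h - \mathrm{div\,}\boldsymbol{F}\quad\text{in }\Omega, \qquad \frac{\partial v}{\partial\boldsymbol{\nu}} + \alpha v = \boldsymbol{F}\cdot\boldsymbol{\nu}\quad\text{on }\partial\Omega,
\]
where $h:=-A\nabla u\cdot\nabla\eta$ and $\boldsymbol{F}:=u\,A\nabla\eta$ are both supported in $(\frac{3}{2}B\setminus B)\cap\Omega$. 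Since $|\nabla\eta|\lesssim 1/r$ and $\delta(y)\lesssim r$ for $y\in \frac{3}{2}B$, one has the pointwise bound $\delta(y)|h(y)|+|\boldsymbol{F}(y)|\lesssim (|\nabla u(y)|+|u(y)|/r)\mathbf{1}_{\frac{3}{2}B\cap\Omega}(y)$.

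Next, Theorem \ref{PRandPRR}(i) gives the solvability of $(\mathrm{PRR}_p)_L$. After a routine truncation of $|\nabla u|$ and $|u|$ (to reduce the $L^2$ data $(h,\boldsymbol{F})$ to $L^{\infty}_{\rm c}$ data, permitted once both sides are a priori finite), this yields
\[
\|\widetilde{\mathcal{N}}(\nabla v)\|_{L^p(\partial\Omega,\sigma)}\lesssim \|\widetilde{\mathcal{C}}_1(\delta|h|+|\boldsymbol{F}|)\|_{L^p(\partial\Omega,\sigma)}.
\]
Since $\eta\equiv 1$ on $B$, we have $\nabla v=\nabla u$ on $B$, so $|\nabla u|\mathbf{1}_B\le|\nabla v|$ pointwise on $\Omega$ and hence $\widetilde{\mathcal{N}}(|\nabla u|\mathbf{1}_B)\le\widetilde{\mathcal{N}}(\nabla v)$ on $\partial\Omega$. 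The proof therefore reduces to establishing the Carleson estimate
\[
\bigl\|\widetilde{\mathcal{C}}_1\bigl((|\nabla u|+|u|/r)\mathbf{1}_{\frac{3}{2}B\cap\Omega}\bigr)\bigr\|_{L^p(\partial\Omega,\sigma)}\lesssim r^{(n-1)/p}\left[\fint_{2B\cap\Omega}|\nabla u|\,dx+\fint_{2B\cap\Omega}|u|\,dx\right].
\]

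To prove this last display, I would observe first that the function inside the norm is supported in $CB\cap\partial\Omega$ for some fixed $C>0$, with $\sigma(CB\cap\partial\Omega)\lesssim r^{n-1}$. For each $y\in \frac{3}{2}B\cap\Omega$, the reverse H\"older bound from Corollary \ref{c2.1} (together with classical interior Meyers when $B(y,c\delta(y))$ lies strictly inside $\Omega$) controls the inner $L^2$ average appearing in $\widetilde{\mathcal{C}}_1$ by an $L^1$ average of $|\nabla u|+|u|$ over a slightly larger ball contained in $4B\cap\Omega$. Plugging this back into $\widetilde{\mathcal{C}}_1$ and performing a Whitney-type integration over $\frac{3}{2}B\cap\Omega$ reduces the estimate to an $L^p$ bound for the boundary Hardy--Littlewood maximal function $\mathcal{M}$ (Lemma \ref{l2.1}) applied to a suitable boundary projection of $(|\nabla u|+|u|/r)\mathbf{1}_{2B\cap\Omega}$; a direct scaling then produces the factor $r^{(n-1)/p}$ and the desired $L^1$ averages on the right, with the $L^p$-Ahlfors normalization of $\alpha$ used to absorb boundary contributions cleanly into those averages. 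The main technical obstacle lies precisely in this final Carleson estimate: balancing the $1/\delta(y)$ weight in $\widetilde{\mathcal{C}}_1$ against the contraction provided by the reverse H\"older inequality requires a careful Whitney decomposition of $\frac{3}{2}B\cap\Omega$. The cutoff construction, the application of Theorem \ref{PRandPRR}(i), and the comparison $\widetilde{\mathcal{N}}(|\nabla u|\mathbf{1}_B)\le\widetilde{\mathcal{N}}(\nabla v)$ are essentially formal, so once the Carleson step is in hand, the conclusion \eqref{e3.13} follows by chaining the displays above.
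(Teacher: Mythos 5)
Your reduction breaks down at the very step you identify as the ``main technical obstacle'': the Carleson estimate you need is false in general. After the cutoff, the source terms $h=-A\nabla u\cdot\nabla\eta$ and $\boldsymbol{F}=uA\nabla\eta$ are supported in the annulus $(\tfrac32B\setminus B)\cap\Omega$ \emph{all the way up to the boundary}, and the bound you must prove,
\[
\left\|\widetilde{\mathcal{C}}_1\left(\left(|\nabla u|+|u|/r\right)\mathbf{1}_{\frac32B\cap\Omega}\right)\right\|_{L^{p}(\partial\Omega,\sigma)}
\lesssim r^{\frac{n-1}{p}}\left[\fint_{2B\cap\Omega}|\nabla u|\,dx+\fint_{2B\cap\Omega}|u|\,dx\right],
\]
cannot hold: the functional $\widetilde{\mathcal{C}}_1$ carries the weight $dy/\delta(y)$, and for any $G$ that does not decay like a power of $\delta$ near $\partial\Omega$ one has $\int_{B'\cap\Omega}[\fint_{B(y,\delta(y)/4)}|G|^2]^{1/2}\,\frac{dy}{\delta(y)}=\infty$, since each dyadic layer $\{\delta(y)\sim 2^{-k}r\}$ contributes a fixed amount $\sim r^{n-1}$. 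Concretely, $|\boldsymbol{F}|\sim|u|/r$ on the annulus, and a Robin (or Neumann, $\alpha\equiv0$) local solution need not vanish at the boundary (e.g.\ a tangential linear harmonic function in a half-space), so $\widetilde{\mathcal{C}}_1(\delta|h|+|\boldsymbol{F}|)$ is identically $+\infty$ while the right-hand side of \eqref{e3.13} is finite; thus the $(\mathrm{PRR}_p)_L$ bound you invoke for $v=\eta u$ is vacuous. Subtracting $c_u$ does not repair this (the oscillation is small in average but has no $\delta$-decay), and the Meyers/reverse H\"older input from Corollary \ref{c2.1} improves integrability, not decay in $\delta$. The separate issue of passing from $L^\infty_{\rm c}$ data to the merely $L^2$ data $(h,\boldsymbol{F})$ is also glossed over, but it is secondary to this obstruction.

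The paper's proof avoids globalizing $u$ altogether: it dualizes $\|\widetilde{\mathcal{N}}(|\nabla u|\mathbf{1}_B)\|_{L^p}$ against a field $\boldsymbol{F}$ normalized by $\|\widetilde{\mathcal{C}}_1(\delta\boldsymbol{F})\|_{L^{p'}}\lesssim1$, truncates it to $\boldsymbol{F}_B:=\boldsymbol{F}\mathbf{1}_B$, introduces the adjoint Robin solution $v$ with data $\boldsymbol{F}_B$, and integrates by parts so that the cutoff derivative terms land on $v$ rather than inside a Carleson functional. Since $L^*v=0$ in $(2B\setminus B)\cap\Omega$ with homogeneous Robin condition there, those error terms are controlled by Caccioppoli, Moser, and Poincar\'e bounds for $v$, then by $\|\widetilde{\mathcal{N}}(v)\|_{L^{p'}}\lesssim\|\widetilde{\mathcal{C}}_1(\delta\boldsymbol{F}_B)\|_{L^{p'}}\lesssim1$ via the solvability of $({\rm PR}_{p'})_{L^*}$, with the $\|\alpha\|_{L^p(B(z,r)\cap\partial\Omega)}\lesssim r^{(n-1)/p}$ normalization handling the boundary term containing $\alpha$. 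If you want to keep a duality-free outline, you would need to reorganize along these lines; as written, the argument cannot be completed.
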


\begin{proof}
Let $B:=B(x,r)$ with $x\in\partial\Omega$ and $r\in(0,\mathrm{diam\,}(\Omega))$, and let $u\in W^{1,2}(2B\cap\Omega)$ be a weak solution to
$Lu=0$ in $2B\cap\Omega$ with the Robin boundary condition $\frac{\partial u}{\partial\boldsymbol{\nu}}+\alpha  u=0$ on $2B\cap\partial\Omega$.
Take $\phi\in C_{\rm c}^{\infty}(\mathbb{R}^n)$ satisfies that $\phi\equiv1$ in $\frac{6}{5}B$, $\mathrm{supp\,}(\phi)\subset\frac{5}{4}B$, and $|\nabla\phi|\lesssim\frac{1}{r}$.
By \eqref{e2.2} and Lemma \ref{Dense}, we conclude that there exists $\boldsymbol{F}\in L_{\rm c}^{\infty}(\Omega;\mathbb{R}^n)$ such that
\begin{equation}\label{e3.14}
\left\|\widetilde{\mathcal{C}}_1(\delta\boldsymbol{F})\right\|_{L^{p'}(\partial\Omega,\sigma)}\lesssim1
\end{equation}
and
\begin{equation}\label{e3.15}
\left\|\widetilde{\mathcal{N}}(\nabla u\mathbf{1}_B)\right\|_{L^{p}(\partial \Omega,\sigma)}\lesssim \int_{\Omega}\mathbf{1}_B\nabla u\cdot\boldsymbol{F}\,dx.
\end{equation}
Let $\boldsymbol{F}_B:=\boldsymbol{F}\mathbf{1}_B$ and $c_u:=\fint_{\frac{5}{4}B\cap\Omega} u\,dx$.
From the assumptions that $\mathrm{supp\,}(F_B)\subset B$ and $\nabla\phi\equiv0$ on $\frac{6}{5}B$,
we deduce that
$$
\int_{\Omega} (u-c_u)\boldsymbol{F}_B\cdot\nabla\phi\,dx=0,
$$
which, combined with \eqref{e3.15}, further implies that
\begin{align}\label{e3.16}
\left\|\widetilde{\mathcal{N}}(\nabla u\mathbf{1}_B)\right\|_{L^{p}(\partial \Omega,\sigma)}
&\lesssim \int_{\Omega}\nabla (u-c_u)\cdot\boldsymbol{F}_B\phi\,dx\nonumber\\
&=\int_{\Omega}\nabla((u-c_u)\phi)\cdot\boldsymbol{F}_B\,dx
=:\mathrm{I}_1.
\end{align}
Let $v$ be the weak solution to the Robin problem \eqref{e3.1} with $\boldsymbol{F}$ replaced by $\boldsymbol{F}_B$. We then have
\begin{align}\label{e3.17}
\mathrm{I}_1&=\int_{\Omega}A\nabla((u-c_u)\phi)\cdot\nabla v\,dx+\int_{\partial\Omega}\alpha  v (u-c_u)\phi\,d\sigma\nonumber\\ \nonumber
&=\int_{\Omega}\phi A\nabla u\cdot\nabla v\,dx+\int_{\Omega}(u-c_u) A\nabla\phi\cdot\nabla v\,dx+\int_{\partial\Omega}\alpha  v (u-c_u)\phi\,d\sigma\\ \nonumber
&=\int_{\Omega}A\nabla u\cdot\nabla(\phi v)\,dx-\int_{\Omega}
v A\nabla u\cdot\nabla\phi\,dx\\
&\quad+\int_{\Omega}(u-c_u) A\nabla\phi\cdot\nabla v\,dx+\int_{\partial\Omega}\alpha  v (u-c_u)\phi\,d\sigma.
\end{align}
By \eqref{e3.17} and the assumption that $Lu=0$ in $2B\cap\Omega$ with $\frac{\partial u}{\partial\boldsymbol{\nu}}+\alpha  u=0$ on $2B\cap\partial\Omega$,
we find that
\begin{align}\label{e3.18}
\mathrm{I}_1&=-\int_{\Omega}
v A\nabla u\cdot\nabla\phi\,dx+\int_{\Omega}(u-c_u)A\nabla\phi\cdot\nabla v\,dx-\int_{\partial\Omega}\alpha  v c_u\phi\,d\sigma\notag\\
&=:\mathrm{I}_2+\mathrm{I}_3+\mathrm{I}_4.
\end{align}

Now, we estimate the terms $\mathrm{I}_2$, $\mathrm{I}_3$, and $\mathrm{I}_4$, respectively. For $\mathrm{I}_2$, from the fact that $v$ is a solution to $L^*v=0$ in $(2B\setminus B)\cap\Omega$ with $\frac{\partial v}
{\partial\boldsymbol{\nu}}+\alpha  v=0$ on $(2B\setminus B)\cap\partial\Omega$, H\"older's inequality, Lemmas \ref{Cacciopoli} and \ref{Moser},
the definition of $\widetilde{\mathcal{N}}$, the solvability of $({\rm PR}_{p'})_{L^*}$, and \eqref{e3.14}, we deduce that
\begin{align}\label{e3.19}
\left|\mathrm{I}_2\right|
&\lesssim r^{n-1}\fint_{\frac{5}{4}B\cap\Omega}|\nabla u|\,dx \sup_{x\in(\frac{5}{4}B\setminus\frac{6}{5}B)\cap\Omega}|v(x)|\nonumber\\
&\lesssim r^{n-1}\fint_{2B\cap\Omega}|\nabla u|\,dx \fint_{(2B\setminus B)\cap\Omega}| v|\,dx\nonumber\\
&\lesssim r^{n-1} r^{\frac{1-n}{p'}}\fint_{2B\cap\Omega}|\nabla u|\,dx\left\|\widetilde{\mathcal{N}}(v)\right\|_{L^{p'}(\partial\Omega,\sigma)}\nonumber\\ &\lesssim r^{\frac{n-1}{p}}\fint_{2B\cap\Omega}|\nabla u|\,dx\left\|\widetilde{\mathcal{C}}_1(\delta\boldsymbol{F}_B)\right\|_{L^{p'}(\partial\Omega,\sigma)}
\lesssim r^{\frac{n-1}{p}}\fint_{2B\cap\Omega}|\nabla u|\,dx.
\end{align}
For $\mathrm{I}_3$, by H\"older's inequality, Lemmas \ref{Cacciopoli}, \ref{Moser}, and \ref{l2.2}, Corollary \ref{c2.1}, the definition of $\widetilde{\mathcal{N}}$,
the solvability of $({\rm PR}_{p'})_{L^*}$, and \eqref{e3.14}, we conclude that
\begin{align}\label{e3.20}
\left|\mathrm{I}_3\right|
&\lesssim r^{n-1}\left[\fint_{(\frac{5}{4}B\setminus\frac{6}{5}B)\cap\Omega}|\nabla v|^2\,dx\right]^{\frac{1}{2}}
\left[\fint_{\frac{5}{4}B\cap\Omega}|u-c_u|^2\,dx\right]^{\frac{1}{2}}\nonumber\\ \nonumber
&\lesssim r^{n-1}\left[\fint_{(\frac{4}{3}B\setminus\frac{7}{6}B)\cap\Omega}|v|^2\,dx\right]^{\frac{1}{2}}
\left[\fint_{\frac{4}{3}B\cap\Omega}|\nabla u|^2\,dx\right]^{\frac{1}{2}}\\ \nonumber
&\lesssim r^{n-1}\sup_{x\in(\frac{4}{3}B\setminus\frac{7}{6}B)\cap\Omega}|v(x)|
\fint_{2B\cap\Omega}|\nabla u|\,dx\\ \nonumber
&\lesssim r^{n-1}\fint_{(2B\setminus B)\cap\Omega}|v|\,dx
\fint_{2B\cap\Omega}|\nabla u|\,dx\\ \nonumber
&\lesssim r^{\frac{n-1}{p}}\left\|\widetilde{\mathcal{N}}(v)\right\|_{L^{p'}(\partial\Omega,\sigma)}
\fint_{2B\cap\Omega}|\nabla u|\,dx\\
&\lesssim r^{\frac{n-1}{p}}\fint_{2B\cap\Omega}|\nabla u|\,dx\left\|\widetilde{\mathcal{C}}_1(\delta\boldsymbol{F}_B)\right\|_{L^{p'}(\partial\Omega,\sigma)}
\lesssim r^{\frac{n-1}{p}}\fint_{2B\cap\Omega}|\nabla u|\,dx.
\end{align}
For $\mathrm{I}_4$, from H\"older's inequality,
the solvability of $({\rm PR}_{p'})_{L^*}$, \eqref{e3.14},
and the assumption that $\|\alpha \|_{L^p(B(z,r)\cap\partial\Omega,\sigma)}\lesssim r^{(n-1)/p}$ for any $z\in\partial\Omega$
and $r\in(0,\mathrm{diam\,}(\Omega))$, it follows that
\begin{align*}
|\mathrm{I}_4|
&\lesssim |c_u|\int_{(\frac{5}{4}B\setminus\frac{6}{5}B)\cap\partial\Omega}\alpha \widetilde{\mathcal{N}}(v)\,d\sigma\\ \nonumber
&\lesssim \fint_{2B\cap\Omega}|u|\,dx\|\alpha \|_{L^{p}(2B\cap\partial\Omega,\sigma)}
\left\|\widetilde{\mathcal{N}}(v)\right\|_{L^{p'}(\partial\Omega,\sigma)}\\ \nonumber
&\lesssim r^{\frac{n-1}{p}}\fint_{2B\cap\Omega}|u|\,dx\left\|\widetilde{\mathcal{C}}_1(\delta\boldsymbol{F}_B)
\right\|_{L^{p'}(\partial\Omega,\sigma)}
\lesssim r^{\frac{n-1}{p}}\fint_{2B\cap\Omega}|u|\,dx.
\end{align*}
This, combined with \eqref{e3.15}, \eqref{e3.16}, \eqref{e3.18}, \eqref{e3.19}, and \eqref{e3.20}, further implies that \eqref{e3.13} holds.
This finishes the proof of Lemma \ref{strongerLoc}.
\end{proof}

Next, we show that the local property $({\rm LocR}_p)_L$ has the
self-improvement property
and implies the solvability of the weak $L^p$ Poisson--Robin problem.

\begin{lemma}\label{selfimprove}
Let $n\ge2$, $\Omega\subset\mathbb{R}^n$ be a bounded one-sided ${\rm CAD}$, $L:=-\mathrm{div\,}(A\nabla\cdot)$ be a uniformly elliptic operator,
and $p\in(1,\infty)$. If $({\rm LocR}_p)_L$ holds, then there exists a positive constant $\varepsilon$, depending only on $p$, $n$, $L$, and $\Omega$,
such that $({\rm LocR}_{q})_L$ holds for any $q\in[1,p+\varepsilon)$.
\end{lemma}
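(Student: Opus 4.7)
The plan splits into two ranges: $q \in [1,p]$ and $q \in (p, p+\varepsilon)$.

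For the easy range $q \in [1,p]$, I would first observe that, by the definition of the cones $\gamma(x)$ (with aperture $a=2$) and Ahlfors regularity of $\partial\Omega$, for any ball $B = B(x_0, r)$ with $x_0 \in \partial\Omega$ the function $\widetilde{\mathcal{N}}(|\nabla u|\mathbf{1}_B)$ is supported in a surface set contained in $C_0 B \cap \partial\Omega$, whose surface measure is $\lesssim r^{n-1}$. Applying Hölder's inequality to the $L^p$-bound supplied by $({\rm LocR}_p)_L$ then upgrades the $L^p$ norm into an $L^q$ norm with exactly the correct scaling factor $r^{(n-1)/q}$, producing $({\rm LocR}_q)_L$ immediately from the hypothesis.

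For the main range $q \in (p, p+\varepsilon)$, the strategy is to derive a weak reverse-Hölder inequality on the space of homogeneous type $(\partial\Omega, \sigma)$ and then invoke Gehring's self-improvement lemma. Writing $f := \widetilde{\mathcal{N}}(|\nabla u|\mathbf{1}_B)$, I would aim at an estimate of the form
$$
\left(\fint_\Delta f^p\,d\sigma\right)^{\frac{1}{p}} \le C \fint_{C_1 \Delta} f\,d\sigma
$$
(with a small, controllable tail) for every surface ball $\Delta \subset C_0 B \cap \partial\Omega$ of radius $s \in (0, r/2)$. Once this is established, the classical Gehring lemma on $(\partial\Omega,\sigma)$ supplies $\varepsilon > 0$, depending only on $p$, $n$, $L$, and $\Omega$, together with the higher integrability inequality which, applied at the largest scale $\Delta = C_0 B \cap \partial\Omega$ and combined with $({\rm LocR}_p)_L$, yields $({\rm LocR}_{p+\varepsilon})_L$. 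To prove the reverse-Hölder inequality at $\Delta = B(y_0,s)\cap\partial\Omega$, I would split $f(x)$ for $x \in \Delta$ according to cone depth: the near part, coming from $y \in \gamma(x)$ with $\delta(y) \lesssim s$, is dominated by $\widetilde{\mathcal{N}}(|\nabla u|\mathbf{1}_{B \cap C'B(y_0,s)})$, to which $({\rm LocR}_p)_L$ applies at scale $s$ and produces an interior average $\fint_{C'B(y_0,s)\cap\Omega}(|\nabla u|+|u|)$; the far part is essentially constant on $\Delta$ and controlled by $\inf_\Delta f$ via the interior Moser estimate (Lemma \ref{Moser}) applied on balls $B(y,\delta(y)/4)$ with $\delta(y) \gtrsim s$.

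The main obstacle I expect is the final conversion of the interior average $\fint_{C'B(y_0,s)\cap\Omega}(|\nabla u|+|u|)\,dx$ into a boundary average of $f$, uniformly in the scale $s$ and in $B$. This is where Corollary \ref{c2.1} (the boundary Meyers estimate) is essential: it upgrades the interior $L^1$-average to an $L^2$-average of $(|\nabla u|,|u|)$ on a slightly smaller interior ball, which is in turn dominated by $f(x)$ for every $x$ in a boundary set of measure $\gtrsim \delta(y)^{n-1}$ by the corkscrew condition. Fubini over the cones $\gamma(x)$ centered at points of $C_1 \Delta$, together with Ahlfors regularity and the bounded-multiplicity covering of $\Omega$ by such cones, then yields the desired boundary $L^1$-average of $f$. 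The argument parallels the Neumann case carried out by Feneuil and Li \cite{fl24}; the extra $|u|$-term and the Robin condition create no new difficulty because $\alpha$ only enters through the local estimates (Lemmas \ref{Cacciopoli}, \ref{Moser} and Corollary \ref{c2.1}), which already incorporate the Robin boundary condition.
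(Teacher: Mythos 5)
Your overall architecture (Hölder for $q\in[1,p]$; a boundary reverse--Hölder inequality plus Gehring/Giaquinta self-improvement for $q\in(p,p+\varepsilon)$) is the same as the paper's, and the easy range is handled identically. But there is a genuine gap in the main range: you run the reverse--Hölder inequality for $f:=\widetilde{\mathcal{N}}(|\nabla u|\mathbf{1}_B)$ alone, and your plan for closing it requires converting the interior average $\fint_{C'B(y_0,s)\cap\Omega}(|\nabla u|+|u|)\,dx$, produced by $({\rm LocR}_p)_L$ at scale $s$, into a boundary average of $f$. The $|\nabla u|$-part of this conversion is fine (it is a Fubini/Whitney-ball argument, and you do not even need Corollary \ref{c2.1}, since $\widetilde{\mathcal{N}}$ is already built from $L^2$ Whitney averages), but the $|u|$-part cannot be dominated by $f$: $\widetilde{\mathcal{N}}(|\nabla u|\mathbf{1}_B)$ carries no information about $|u|$. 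For instance, if $\alpha$ vanishes on $2B\cap\partial\Omega$ (which is allowed under \eqref{e1.3}), any nonzero constant is a local solution with $f\equiv 0$ but $\fint|u|>0$, so the step "the $L^2$-average of $(|\nabla u|,|u|)$ is dominated by $f(x)$" fails. This is exactly the point where the Robin problem differs from the Neumann problem: one cannot subtract constants, so no Poincaré inequality removes the $|u|$-term, and your closing remark that the extra $|u|$-term "creates no new difficulty" is precisely where the difficulty sits.

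The paper resolves this by enlarging the function subject to the reverse--Hölder inequality to $g:=\widetilde{\mathcal{N}}(|\nabla u|\mathbf{1}_B)+\widetilde{\mathcal{N}}(|u|\mathbf{1}_B)$ and by first proving the auxiliary bound \eqref{e3.22}, namely $\|\widetilde{\mathcal{N}}(|u|\mathbf{1}_B)\|_{L^{p}(\partial\Omega,\sigma)}\lesssim r^{(n-1)/p}\fint_{2B\cap\Omega}|u|\,dx$ for every $p$, which follows from the duality \eqref{e2.2}, the density Lemma \ref{Dense}, and the Moser estimate of Lemma \ref{Moser}; with this, the $|u|$-contributions close inside $g$ and Gehring applies to $g$, after which $({\rm LocR}_p)_L$ and \eqref{e3.22} give the $L^{p+\varepsilon}$ bound with the correct right-hand side $\fint_{2B\cap\Omega}|\nabla u|\,dx+\fint_{2B\cap\Omega}|u|\,dx$. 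If you want to keep working with $f$ alone, you would have to treat the $|u|$-term as an inhomogeneous term in a Gehring lemma with a majorant, and you would then still need an a priori $L^{p+\varepsilon}$ bound for $\widetilde{\mathcal{N}}(|u|\mathbf{1}_B)$, i.e. essentially \eqref{e3.22} again; so some version of that estimate is unavoidable and is missing from your proposal. A minor further point: your treatment of the far part via the interior Moser estimate on balls $B(y,\delta(y)/4)$ is not correct as stated ($|\nabla u|$ is not a solution, so Lemma \ref{Moser} does not apply to it), and it is also unnecessary — the far part is handled purely geometrically by passing to a larger aperture and using Lemma \ref{TentEqui}, as in the paper's estimate of the term $g_2$.
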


\begin{proof}
Let $B:=B(x,r)$ with $x\in\partial\Omega$ and $r\in(0,\mathrm{diam\,}(\Omega))$ and let $u\in W^{1,2}(2B\cap\Omega)$ be a weak solution to
$Lu=0$ in $2B\cap\Omega$ with the Robin boundary condition $\frac{\partial u}{\partial\boldsymbol{\nu}}+\alpha  u=0$ on $2B\cap\partial\Omega$.
Assume that $({\rm LocR}_p)_L$ holds. We first prove $({\rm LocR}_{q})_L$ holds for any $q\in[1,p]$.

By the fact that $\widetilde{\mathcal{N}}(|\nabla u|\mathbf{1}_B)$ is supported in $5B\cap\partial\Omega$, H\"older's inequality, and
the assumption that $({\rm Loc}_{p})_L$ holds, we find that
\begin{align*}
r^{-\frac{n-1}{q}}\left\|\widetilde{\mathcal{N}}\left(|\nabla u|\mathbf{1}_B\right)\right\|_{L^{q}(\partial\Omega,\sigma)}
&\lesssim \left\{\fint_{5B\cap\partial\Omega}\left[\widetilde{\mathcal{N}}\left(|\nabla u|\mathbf{1}_B\right)\right]^{q}\,d\sigma\right\}^{\frac{1}{q}}\\ \notag
&\lesssim \left\{\fint_{5B\cap\partial\Omega}\left[\widetilde{\mathcal{N}}(|\nabla u|\mathbf{1}_B)\right]^{p}\,d\sigma\right\}^{\frac{1}{p}}\\ \nonumber
&\lesssim \fint_{2B\cap\Omega}|\nabla u|\,dx+\fint_{2B\cap\Omega}|u|\,dx.
\end{align*}
Thus, $({\rm LocR}_{q})_L$ holds for any $q\in[1,p]$.

Next, we prove that there exists some $\varepsilon\in(0,\infty)$, depending only on $p$, $n$, $L$, and $\Omega$, such that
\begin{equation}\label{e3.21}
\left\|\widetilde{\mathcal{N}}(|\nabla u|\mathbf{1}_B)\right\|_{L^{p+\varepsilon}(\partial\Omega,\sigma)}\lesssim r^{\frac{n-1}{p+\varepsilon}}
\left(\fint_{2B\cap\Omega}|\nabla u|\,dx+\fint_{2B\cap\Omega}|u|\,dx\right),
\end{equation}
which further implies that $({\rm LocR}_q)_L$ holds for any $q\in(p,p+\varepsilon)$.
We first claim that, for any $p\in(1,\infty)$,
\begin{equation}\label{e3.22}
\left\|\widetilde{\mathcal{N}}(|u|\mathbf{1}_B)\right\|_{L^{p}(\partial \Omega,\sigma)}\lesssim r^{\frac{n-1}{p}}\fint_{2B\cap\Omega}|u|\,dx.
\end{equation}
Indeed, from \eqref{e2.2}, Lemma \ref{Dense}, the definition of $\widetilde{\mathcal{C}}_1$, and Lemma \ref{Moser}, it follows that
there exists $g\in L_{\rm c}^{\infty}(\Omega)$ such that
$$
\left\|\widetilde{\mathcal{C}}_1(\delta g)\right\|_{L^{p'}(\partial\Omega,\sigma)}\lesssim1
$$
and
\begin{align*}
\left\|\widetilde{\mathcal{N}}(u\mathbf{1}_B)\right\|_{L^{p}(\partial \Omega,\sigma)}
&\lesssim \int_{\Omega}\mathbf{1}_Bug\,dx\lesssim r^{n}\sup_{x\in B\cap\Omega}|u(x)|\fint_{B\cap\Omega}|g|\,dx\\ \nonumber
&\lesssim r^{n-1}\fint_{2B\cap\Omega}|u|\,dx\inf_{x\in B\cap\partial\Omega}\widetilde{\mathcal{C}}_1(\delta g)(x)\\ \nonumber
&\lesssim r^{n-1}r^{\frac{1-n}{p'}}\fint_{2B\cap\Omega}|u|\,dx\left\|\widetilde{\mathcal{C}}_1(\delta g)\right\|_{L^{p'}(\partial\Omega,\sigma)}
\lesssim r^{\frac{n-1}{p}}\fint_{2B\cap\Omega}|u|\,dx.
\end{align*}
Furthermore, for any $y\in\partial\Omega$, define
$$
g(y):=\widetilde{\mathcal{N}}(|\nabla u|\mathbf{1}_B)(y)+\widetilde{\mathcal{N}}(|u|\mathbf{1}_B)(y).
$$
We claim that  there exists a positive constant $C$, independent of $B$ and $u$, such that, for any ball $\widetilde{B}$
centered at the boundary $\partial\Omega$ that satisfies $24\widetilde{B}\subset10B$,
\begin{equation}\label{e3.23}
\left(\fint_{\widetilde{B}\cap\partial\Omega}g^p\,d\sigma\right)^{\frac{1}{p}}\le C\fint_{12\widetilde{B}\cap\partial\Omega}g\,d\sigma.
\end{equation}
If \eqref{e3.23} holds, by \cite[Proposition 1.2, p.\,122]{gia83}, we conclude that there exists $\varepsilon\in(0,\infty)$ such that
\begin{equation}\label{e3.24}
\left(\fint_{5B\cap\partial\Omega}g^{p+\varepsilon}\,d\sigma
\right)^{\frac{1}{p+\varepsilon}}\lesssim\left(\fint_{10B\cap\partial\Omega}g^p
\,d\sigma\right)^{\frac{1}{p}}.
\end{equation}
Then, from \eqref{e3.24}, the assumption that $({\rm Loc}_{p})_{L}$ holds, and \eqref{e3.22}, it follows that
\begin{align*}
&r^{-\frac{n-1}{p+\varepsilon}}\left\|\widetilde{\mathcal{N}}(|\nabla u|\mathbf{1}_B)\right\|_{L^{p+\varepsilon}(\partial\Omega,\sigma)}\\
&\quad\lesssim \left\{\fint_{5B\cap\partial\Omega}\left[\widetilde{\mathcal{N}}
(|\nabla u|\mathbf{1}_B)\right]^{p+\varepsilon}\,d\sigma\right\}^{\frac{1}{p+\varepsilon}}\\ \nonumber
&\quad\lesssim \left(\fint_{5B\cap\partial\Omega}g^{p+\varepsilon}\,d\sigma\right)^{\frac{1}{p+\varepsilon}}
\lesssim\left(\fint_{10B\cap\partial\Omega}g^p\,d\sigma\right)^{\frac{1}{p}}\\ \nonumber
&\quad\lesssim \left\{\fint_{10B\cap\partial\Omega}\left[\widetilde{\mathcal{N}}(|\nabla u|\mathbf{1}_B)\right]^{p}\,d\sigma\right\}^{\frac{1}{p}}
+\left\{\fint_{10B\cap\partial\Omega}\left[\widetilde{\mathcal{N}}
(|u|\mathbf{1}_B)\right]^{p}\,d\sigma\right\}^{\frac{1}{p}}\\ \nonumber
&\quad\lesssim \fint_{2B\cap\Omega}|\nabla u|\,dx+\fint_{2B\cap\Omega}|u|\,dx,
\end{align*}
which implies that \eqref{e3.21} holds.

Now, we return to prove \eqref{e3.23}. Let $z\in\partial\Omega$ and $\widetilde{B}:=B(z,s)$ satisfy $24\widetilde{B}\subset10B$.
For any $y\in \widetilde{B}\cap\partial\Omega$, we decompose $g(y)$ into
\begin{align*}
g(y)
&\le\widetilde{\mathcal{N}}\left(|\nabla u|\mathbf{1}_{\frac{6}{5}\widetilde{B}}\right)(y)+\widetilde{\mathcal{N}}\left(|\nabla u|\mathbf{1}_{B\setminus\frac{6}{5}\widetilde{B}}\right)(y)
+\widetilde{\mathcal{N}}\left(|u|\mathbf{1}_{\frac{6}{5}\widetilde{B}}\right)(y)
+\widetilde{\mathcal{N}}\left(|u|\mathbf{1}_{B\setminus\frac{6}{5}\widetilde{B}}\right)(y)\\ \nonumber
&=:g_1(y)+g_2(y)+g_3(y)+g_4(y).
\end{align*}
For $g_1$, by the assumption that $({\rm LocR}_p)_L$ holds, we find that
\begin{align}\label{e3.25}
\left(\fint_{\widetilde{B}\cap\partial\Omega}g_1^p\,d\sigma\right)^{\frac{1}{p}}
&\lesssim s^{-\frac{n-1}{p}}\|g_1\|_{L^p(\partial\Omega,\sigma)}
\lesssim s^{-\frac{n-1}{p}}\left\|\widetilde{\mathcal{N}}\left(|\nabla u|
\mathbf{1}_{\frac{6}{5}\widetilde{B}}\right)\right\|_{L^p(\partial\Omega,\sigma)}\notag\\
&\lesssim \fint_{\frac{12}{5}\widetilde{B}\cap\Omega}|\nabla u|\,dx+\fint_{\frac{12}{5}\widetilde{B}\cap\Omega}|u|\,dx.
\end{align}
A direct calculation yields that, for any $y\in\partial\Omega$,
$$\int_{\gamma(y)}\mathbf{1}_{\frac{12}{5}\widetilde{B}}(x)[\delta(x)]^{1-n}\,dx\lesssim s,$$
which further implies that
\begin{align}\label{e3.26}
\fint_{\frac{12}{5}\widetilde{B}\cap\Omega}|\nabla u|\,dx
&\lesssim s^{-n}\int_{\partial\Omega}\int_{\gamma(y)}\left(\fint_{B(x,\frac{1}{4}\delta(x))}|\nabla u(w)|^2
\mathbf{1}_{\frac{12}{5}\widetilde{B}}(w)\,dw\right)^{\frac{1}{2}}\nonumber\\ \notag
&\quad\times\mathbf{1}_{\frac{12}{5}\widetilde{B}}(x)[\delta(x)]^{1-n}\,dxd\sigma(y)\\
&\lesssim s^{1-n}\int_{\partial\Omega}\widetilde{\mathcal{N}}\left(|\nabla u|\mathbf{1}_{\frac{12}{5}\widetilde{B}}\right)\,d\sigma
\lesssim \fint_{12\widetilde{B}\cap\partial\Omega}\widetilde{\mathcal{N}}\left(|\nabla u|\mathbf{1}_{\frac{12}{5}\widetilde{B}}\right)\,d\sigma.
\end{align}
Similarly, we also have
$$
\fint_{\frac{12}{5}\widetilde{B}\cap\Omega}|u|\,dx
\lesssim \fint_{12\widetilde{B}\cap\partial\Omega}\widetilde{\mathcal{N}}
\left(|u|\mathbf{1}_{\frac{12}{5}\widetilde{B}}\right)\,d\sigma.
$$
From this, \eqref{e3.25}, \eqref{e3.26}, and $24\widetilde{B}\subset10B$, we deduce that
\begin{align}\label{e3.27}
\left(\fint_{\widetilde{B}\cap\partial\Omega}g_1^p\,d\sigma\right)^{\frac{1}{p}}
&\lesssim \fint_{12\widetilde{B}\cap\partial\Omega}\widetilde{\mathcal{N}}\left(|\nabla u|\mathbf{1}_{\frac{12}{5}\widetilde{B}}\right)\,d\sigma
+\fint_{12\widetilde{B}\cap\partial\Omega}\widetilde{\mathcal{N}}
\left(|u|\mathbf{1}_{\frac{12}{5}\widetilde{B}}\right)\,d\sigma\nonumber\\
&\lesssim \fint_{12\widetilde{B}\cap\partial\Omega}\widetilde{\mathcal{N}}\left(|\nabla u|\mathbf{1}_{B}\right)\,d\sigma
+\fint_{12\widetilde{B}\cap\partial\Omega}\widetilde{\mathcal{N}}
\left(|u|\mathbf{1}_{B}\right)\,d\sigma
\lesssim \fint_{12\widetilde{B}\cap\partial\Omega}g\,d\sigma.
\end{align}
For $g_2$, similar to an argument used in \cite[p.\,31]{fl24}, we conclude that, for any $y\in\widetilde{B}\cap\partial\Omega$,
\begin{align}\label{e3.28}
g_{2}(y)\lesssim\fint_{2\widetilde{B}\cap\partial\Omega}\widetilde{\mathcal{N}}^{a}\left(|\nabla u|\mathbf{1}_{B}\right)\,d\sigma,
\end{align}
where $\widetilde{\mathcal{N}}^{a}$ denotes the modified non-tangential maximal function associated with some aperture $a$ with $a\in(2,\infty)$
being a constant independent of $u$, $\widetilde{B}$, and $y$.
Thus, by Lemma \ref{TentEqui} and \eqref{e3.28}, we find that, for any $y\in\widetilde{B}\cap\partial\Omega$,
$$
g_{2}(y)\lesssim\fint_{2\widetilde{B}\cap\partial\Omega}\widetilde{\mathcal{N}}\left(|\nabla u|\mathbf{1}_{B}\right)\,d\sigma
\lesssim\fint_{12\widetilde{B}\cap\partial\Omega}g\,d\sigma,
$$
which further implies that
\begin{equation}\label{e3.29}
\left(\fint_{\widetilde{B}\cap\partial\Omega}g_2^p\,d\sigma\right)^{\frac{1}{p}}
\lesssim\fint_{12\widetilde{B}\cap\partial\Omega}g\,d\sigma.
\end{equation}
Moreover, applying \eqref{e3.22} and an argument similar to that used in \eqref{e3.27} and \eqref{e3.29}, we conclude that, for $i\in\{3,4\}$,
$$
\left(\fint_{\widetilde{B}\cap\partial\Omega}g_i^p\,d\sigma\right)^{\frac1p}
\lesssim\fint_{12\widetilde{B}\cap\partial\Omega}g\,d\sigma,
$$
which, combined with \eqref{e3.27} and \eqref{e3.29}, further implies that \eqref{e3.23} holds.
This finishes the proof of Lemma \ref{selfimprove}.
\end{proof}

\begin{lemma}\label{LocwPR}
Let $n\ge3$, $\Omega\subset\mathbb{R}^n$ be a bounded one-sided {\rm CAD}, $p\in(1,\infty)$, $p'$ be the H\"older conjugate of $p$,
$L:=-\mathrm{div}(A\nabla\cdot)$ be a uniformly elliptic operator, and $\alpha$ be the same as in \eqref{e1.3}.
If $({\rm LocR}_p)_L$ holds, then $({\rm wPR}_{p'})_{L^*}$ is solvable.
\end{lemma}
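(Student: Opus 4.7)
The plan is to invoke Theorem \ref{PRandPRR}(ii), which reduces the goal to proving $({\rm wPRR}_p)_L$: for every $\boldsymbol{F}\in L_{\rm c}^\infty(\Omega;\mathbb{R}^n)$, the weak solution $u$ to $Lu=-\mathrm{div\,}\boldsymbol{F}$ in $\Omega$ with the Robin condition $A\nabla u\cdot\boldsymbol{\nu}+\alpha u=\boldsymbol{F}\cdot\boldsymbol{\nu}$ on $\partial\Omega$ satisfies $\|\widetilde{\mathcal{N}}(\nabla u)\|_{L^p(\partial\Omega,\sigma)}\lesssim\|\widetilde{\mathcal{C}}_1(\boldsymbol{F})\|_{L^p(\partial\Omega,\sigma)}$. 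This is the natural formulation for exploiting the hypothesis $({\rm LocR}_p)_L$, which controls local solutions to $L$ itself. Note that a straight duality move (combining Lemma \ref{NdualC} with the auxiliary Robin problem) turns out to be circular, because it reconverts $({\rm wPRR}_p)_L$ into $({\rm wPR}_{p'})_{L^*}$; the local property must therefore be used directly on a suitable decomposition of $u$.

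I would then obtain a pointwise estimate at a.e. $x_0\in\partial\Omega$ by localization. Fix $y\in\gamma(x_0)$, set $r:=\delta(y)$, pick $z_0\in\partial\Omega$ with $|y-z_0|\sim r$, and let $B:=B(z_0,Kr)$ with $K$ an absolute constant chosen so that $B(y,r/4)\subset B$. Split $\boldsymbol{F}=\boldsymbol{F}_1+\boldsymbol{F}_2$ with $\boldsymbol{F}_1:=\boldsymbol{F}\mathbf{1}_{2B\cap\Omega}$, and accordingly $u=u_1+u_2$ by linearity. The local piece $u_1$ is controlled by combining the Lax--Milgram bound $\|u_1\|_{W^{1,2}(\Omega)}\lesssim\|\boldsymbol{F}_1\|_{L^2(\Omega)}$ from Remark \ref{r1.1} with the interior Caccioppoli inequality, Lemma \ref{Cacciopoli}(i), applied to the Whitney ball $B(y,r/4)$; translating the resulting $L^2$-average of $\boldsymbol{F}$ on $2B\cap\Omega$ into the Carleson framework gives a bound by $\mathcal{M}[\widetilde{\mathcal{C}}_1(\boldsymbol{F})](x_0)$. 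The far piece $u_2$ is a weak local solution to $Lu_2=0$ in $2B\cap\Omega$ with homogeneous Robin data on $2B\cap\partial\Omega$, so the local property $({\rm LocR}_q)_L$ (with $q$ suitably chosen, using the self-improvement of Lemma \ref{selfimprove} if necessary) yields an $L^p$-estimate on $\widetilde{\mathcal{N}}(|\nabla u_2|\mathbf{1}_B)$ in terms of the interior averages $\fint_{2B\cap\Omega}(|\nabla u_2|+|u_2|)$.

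The remaining, and decisive, step is to convert these interior averages into a control by $\widetilde{\mathcal{C}}_1(\boldsymbol{F})$. For this I would use the Green's function representation of $u_2$ against $\boldsymbol{F}_2$ from Lemma \ref{Green}(iii), together with the pointwise bound $G_R(x,w)\lesssim|x-w|^{2-n}$ of Lemma \ref{Green}(iv) and the averaged gradient estimate of Lemma \ref{Green}(v); both of these require $n\ge 3$. Dyadically decomposing $\Omega\setminus 2B$ into annular shells $(2^{k+1}B\setminus 2^kB)\cap\Omega$ and applying the duality inequality of Lemma \ref{NdualC} on each shell produces a geometric series bounded by $\mathcal{M}[\widetilde{\mathcal{C}}_1(\boldsymbol{F})](x_0)$. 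Assembling the pointwise estimates for $u_1$ and $u_2$, taking the supremum over $y\in\gamma(x_0)$, and invoking the $L^p$-boundedness of $\mathcal{M}$ from Lemma \ref{l2.1} finishes the proof. The hard part is exactly this last conversion: passing from interior $L^2$-averages of the far solution $u_2$ on $2B\cap\Omega$ to a Carleson-functional maximal control on the original data $\boldsymbol{F}$. The hypothesis $n\ge 3$ is indispensable at this step, since the logarithmic bound of Lemma \ref{Green}(iv) in dimension two would destroy the convergence of the annular summation without a separate argument.
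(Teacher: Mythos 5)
The reduction via Theorem \ref{PRandPRR}(ii) is legitimate, but your localization scheme for $({\rm wPRR}_p)_L$ has a fatal step in the treatment of the local piece $u_1$. You truncate $\boldsymbol{F}$ at the boundary-ball scale, $\boldsymbol{F}_1=\boldsymbol{F}\mathbf{1}_{2B\cap\Omega}$ with $B=B(z_0,Kr)$, and control $u_1$ only through the global energy bound, so after Caccioppoli on $B(y,r/4)$ you are left with $\bigl(r^{-n}\int_{2B\cap\Omega}|\boldsymbol{F}|^2\,dw\bigr)^{1/2}$, which you assert is $\lesssim\mathcal{M}[\widetilde{\mathcal{C}}_1(\boldsymbol{F})](x_0)$. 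This inequality is false. Take $\boldsymbol{F}:=A\mathbf{1}_{Q}\,\mathbf{e}_1$ with $Q=B(w_0,\delta(w_0)/4)$ a Whitney ball of scale $t:=\delta(w_0)\ll r$ inside $B$ at distance $\sim r$ from $x_0$. Since the inner averages $\fint_{B(w,c\delta(w))}|\boldsymbol{F}|^2$ vanish unless $\delta(w)\sim_c t$ (any $c<1$), one gets $\widetilde{\mathcal{C}}_1(\boldsymbol{F})(x')\sim A\min\{1,(t/\mathrm{dist}(x',Q))^{n-1}\}$ and hence $\mathcal{M}[\widetilde{\mathcal{C}}_1(\boldsymbol{F})](x_0)\sim A(t/r)^{n-1}\log(r/t)$, while $\bigl(r^{-n}\int_{2B\cap\Omega}|\boldsymbol{F}|^2\bigr)^{1/2}\sim A(t/r)^{n/2}$; for $n\ge3$ the ratio blows up as $t/r\to0$. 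The structural point is that an $L^2$ bulk quantity over a Carleson box cannot be dominated by the Carleson functional, which is an $L^1$-in-the-bulk average of Whitney $L^2$ means. This is exactly why the paper never cuts at the boundary-ball scale: in its proof the cut-off is the Whitney ball $2B_y=B(y,\delta(y)/2)$, where $\delta\sim r$, so the energy bound reduces to a single Whitney average, which is dominated by $\widetilde{\mathcal{C}}_1$; the data sitting in the truncated cone below $y$ form a separate piece, estimated via the Green function, and they produce the area-functional term $\widetilde{\mathcal{A}}_1(\delta\boldsymbol{F})(z)$ in the pointwise bound \eqref{e3.31}, a term that is not pointwise comparable to $\mathcal{M}[\widetilde{\mathcal{C}}_1(\cdot)](z)$ and is converted back to the Carleson functional only after taking $L^{p'}$ norms (Lemma \ref{TentEqui}). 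Your two-piece decomposition has no slot for this cone contribution, so the pointwise estimate you aim for is not merely unproved but unattainable in the asserted form.

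The far piece also does not close as sketched. To control the shell contributions to the interior averages of $u_2$ you need an $L^{p_1}$ bound on $\widetilde{\mathcal{N}}(\nabla G_R(\cdot,x)\mathbf{1}_{\mathrm{shell}})$, and neither the pointwise bound \eqref{e2.12} nor the averaged bounds of Lemma \ref{Green}(v) provide it; the hypothesis must be applied, after the self-improvement of Lemma \ref{selfimprove}, to the Green function itself (a local solution with vanishing Robin data away from its pole), and the summation over the far region is made to converge by the boundary H\"older oscillation estimate of Lemma \ref{Holder}, which is precisely the paper's argument for its third piece. Moreover, applying \eqref{e2.1} on each shell produces $\widetilde{\mathcal{C}}_1(\delta\boldsymbol{F})$, not $\widetilde{\mathcal{C}}_1(\boldsymbol{F})$, so an extra factor of the shell radius must be absorbed; this is one reason the paper works with the weighted quantities $\widetilde{\mathcal{N}}(\delta\nabla u)$ versus $\widetilde{\mathcal{C}}_1(\delta\boldsymbol{F})$, i.e.\ proves $({\rm wPR}_{p'})_{L^*}$ directly from the representation \eqref{e2.11}, rather than the unweighted $({\rm wPRR}_p)_L$ form you chose. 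To repair the proof you would essentially have to rebuild it along the paper's three-piece decomposition (Whitney ball, truncated cone, far region) with the local property applied to $G_R$.
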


\begin{proof}
We prove the present lemma by applying the representation formula \eqref{e2.11} and borrowing some ideas from the proofs of
\cite[Theorem 1.11]{mpt22}, \cite[Lemma 5.9]{fl24}, and \cite[Lemma 2.10]{kp96}.

Let $\boldsymbol{F}\in L^{\infty}_{\rm c}(\Omega;\mathbb{R}^n)$ and $u$ be the weak solution to the Robin problem
\begin{equation}\label{e3.30}
\begin{cases}
L^*u=-\mathrm{div\,}\boldsymbol{F}\ \ & \text{in}\ \ \Omega,\\
\displaystyle\frac{\partial u}{\partial\boldsymbol{\nu}}+\alpha  u=\boldsymbol{F}\cdot\boldsymbol{\nu} \ \ & \text{on}\ \ \partial\Omega.
\end{cases}
\end{equation}
Then, from \eqref{e2.11} and Remark \ref{r2.1}, we infer that, for any $x\in\Omega$,
\begin{equation*}
u(x)=\int_{\Omega}\nabla_{y}G_R(y,x)\cdot\boldsymbol{F}(y)\,dy.
\end{equation*}
By the assumption that $({\rm LocR}_p)_L$ holds and Lemma \ref{selfimprove}, we find that there exists $p_1\in(p,\infty)$
such that $({\rm Loc}_{p_1})_L$ holds.
Let $a\in(1,\frac{3}{2})$. We first claim that, for any $z\in\partial\Omega$,
\begin{equation}\label{e3.31}
\widetilde{\mathcal{N}}^{a,\frac{1}{16}}(\delta\nabla u)(z)
\lesssim\left\{\mathcal{M}\left(\left[\widetilde{\mathcal{C}}_1^{\frac{5}{8}}( \delta\boldsymbol{F})\right]^{p_1'}\right)(z)\right\}^{\frac{1}{p_1'}}
+\widetilde{\mathcal{A}}_1^{a^*,\frac{1}{16}}(\delta\boldsymbol{F})(z),
\end{equation}
where $p_1'\in(1,\infty)$ is the H\"older conjugate of $p_1$ and $a^*\in(a,\infty)$ is a constant determined later.
If \eqref{e3.31} holds, then, from \eqref{e3.31}, Lemma \ref{TentEqui}, and the $L^{\frac{p'}{p_1'}}$-boundedness of the
Hardy--Littlewood maximal operator $\mathcal{M}$ (see Lemma \ref{l2.1}),
we deduce that
$$
\left\|\widetilde{\mathcal{N}}(\delta\nabla u)\right\|_{L^{p'}(\partial\Omega,\sigma)}\lesssim
\left\|\widetilde{\mathcal{C}}_1(\delta\boldsymbol{F})\right\|_{L^{p'}(\partial\Omega,\sigma)},
$$
which implies that the problem $({\rm wPR}_{p'})_{L^*}$ is solvable.

Now, we turn to prove \eqref{e3.31}. Fix $z\in\partial\Omega$ and $y\in\gamma(z)$.
Then, for any $x\in\Omega$, we decompose $u(x)$ as
\begin{align}\label{e3.32}
u(x)=&\,u_1(x)+u_2(x)+u_3(x)\nonumber\\
:=&\,\int_{2B_y}\boldsymbol{F}(w)\cdot\nabla_{w}G_R(w,x)\,dw+
\int_{\gamma_{a*,y}(z)\setminus2B_y}\cdots+\int_{\Omega\setminus\gamma_{a*,y}(z)}
\cdots,
\end{align}
where $B_y:=B(y,\frac{1}{4}\delta(y))$ and
$$
\gamma_{a*,y}(z):=\left\{w\in\Omega:\frac{\delta(y)}{a^*}<|w-z|<a^*\delta(w)\right\}
$$
is a cone, with aperture $a^*$ to be determined later, removing a small ball centered at $z$.

We first deal with $u_1$. Notice that $u_1$ is a weak solution to the Robin problem \eqref{e3.30}
with $\boldsymbol{F}$ replaced by $\boldsymbol{F}\mathbf{1}_{2B_y}$.
Then, by Remark \ref{r1.1} and the fact that, for any $x\in\frac{1}{8}B_y$, $2B_y\subset\frac{5}{2}B_x$, we conclude that
\begin{align}\label{e3.33}
&\left(\fint_{\frac{1}{4}B_y}|\delta\nabla u_1|^2\,dw\right)^{\frac{1}{2}}\nonumber\\ \notag
&\quad\lesssim[\delta(y)]^{1-\frac{n}{2}}\left(\int_{\Omega}|\nabla u_1|^2\,dw\right)^{\frac12}\\ \notag
&\quad\lesssim[\delta(y)]^{1-\frac{n}{2}}\left(\int_{\Omega}|\boldsymbol{F}
\mathbf{1}_{2B_y}|^2\,dw\right)^{\frac{1}{2}}
\lesssim\left(\fint_{2B_y}|\delta\boldsymbol{F}|^2\,dw\right)^{\frac{1}{2}}\\ \nonumber
&\quad\lesssim\fint_{\frac{1}{8}B_y}\left(\fint_{2B_y}|\delta\boldsymbol{F}|^2\,dw
\right)^{\frac{1}{2}}\,d\xi
\lesssim\fint_{\frac{1}{8}B_y}\left(\fint_{\frac{5}{2}B_{\xi}}|\delta\boldsymbol{F}|^2
\,dw\right)^{\frac{1}{2}}\,d\xi\\
&\quad\lesssim[\delta(y)]^{1-n}\int_{B(z,2\delta(y))\cap\Omega}\left(\fint_{\frac{5}{2}B_{\xi}}
|\delta\boldsymbol{F}|^2\,dw\right)^{\frac{1}{2}}\,\frac{d\xi}{\delta(\xi)}
\lesssim\widetilde{\mathcal{C}}_1^{\frac{5}{8}}(\delta\boldsymbol{F})(z).
\end{align}

For $u_2$, since $L^*u_2=0$ in $2B_y$, it follows from Caccioppoli's inequality as in Lemma \ref{Cacciopoli} and the Moser type
estimate as in Lemma \ref{Moser} that
$$
\left(\fint_{\frac{1}{4}B_y}|\delta\nabla u_2|^2\,dw\right)^{\frac{1}{2}}
\lesssim\left(\fint_{\frac{1}{2}B_y}|u_2|^2\,dw\right)^{\frac{1}{2}}
\lesssim\sup_{w\in\frac{1}{2}B_y}|u_2(w)|,
$$
which, combined with the expression of $u_2$ as in \eqref{e3.32}, Lemma \ref{Green}(v), and the fact that,
for any $x\in\gamma_{a^*,y}(z)\setminus2B_y$ and $w\in\frac{1}{2}B_y$, $B_w\subset2a(a^*)^2B_x\setminus(\frac{1}{2}B_x)$
(see \cite[p.\,34]{fl24} for its proof), further yields that
\begin{align}\label{e3.34}
&\left(\fint_{\frac{1}{4}B_y}|\delta\nabla u_2|^2\,dw\right)^{\frac{1}{2}}\nonumber\\ \nonumber
&\quad\lesssim\sup_{w\in\frac{1}{2}B_y}|u_2(w)|
\lesssim\sup_{w\in\frac{1}{2}B_y}\left|\int_{\gamma_{a^*,y}(z)\setminus2B_y}
\boldsymbol{F}(x)\cdot\nabla_{x}G_R(x,w)\,dx\right|\\ \nonumber
&\quad\lesssim\sup_{w\in\frac{1}{2}B_y}\int_{\gamma_{a^*,y}(z)\setminus2B_y}
\fint_{\frac{1}{4}B_x}\left|\boldsymbol{F}(\xi)\cdot\nabla_{\xi}G_R(\xi,w)\right|\,d\xi dx\\ \nonumber
&\quad\lesssim\sup_{w\in\frac{1}{2}B_y}\int_{\gamma_{a^*,y}(z)\setminus2B_y}
\left(\fint_{\frac{1}{4}B_x}|\boldsymbol{F}|^2\,d\xi\right)^{\frac{1}{2}}
\left(\fint_{\frac{1}{4}B_x}\left|\nabla_{\xi}G_R(\xi,w)\right|^2\,d\xi\right)^{\frac{1}{2}}\,dx\\
&\quad\lesssim\int_{\gamma_{a^*}(z)}\left(\fint_{\frac{1}{4}B_x}|
\delta\boldsymbol{F}|^2\right)^{\frac{1}{2}}\,\frac{dx}{[\delta(x)]^n}
\lesssim\widetilde{\mathcal{A}}_1^{a^*,\frac{1}{16}}(\delta\boldsymbol{F})(z).
\end{align}

Next, we estimate $u_3$. Let $a^*\in(300,\infty)$.  Then the union of balls $\{B(x,\frac{1}{100}
|x-y|)\}_{x\in\partial\Omega}$ covers $\Omega\setminus\gamma_{a*,y}(z)$. Thus, applying Vitali's covering lemma (see, for example, \cite{s93}),
we find that there exist an index set $I$ and a non-overlapping subcollection $\{B(x_k,\frac{1}{100}|x_k-y|)\}_{k\in I}$ such that
$\{B_k:=B(x_k,\frac{1}{20}|x_k-y|)\}_{k\in I}$ is a finitely overlapping cover of $\Omega\setminus\gamma_{a*,y}(z)$.
For any $k\in I$, let
$$
D_k:=\left(B_k\cap\Omega\right)\setminus\left(\gamma_{a*,y}(z)\cup\bigcup_{i\in I,\,i<k}B_i\right).
$$
Then
$$
\Omega\setminus\gamma_{a*,y}(z)\subset\bigcup_{k\in I}D_k.
$$
Notice that, for any $k\in I$, the radius of $B_k$ is bounded from below by $\frac{1}{20}\delta(y)$, and the number of balls in $\{B_k\}_{k\in I}$
with radius between $\frac{2^m}{20}\delta(y)$ and $\frac{2^{m+1}}{20}\delta(y)$ is uniformly bounded for $m\in\mathbb{N}$.
Thus, for any given $\beta\in(0,\infty)$,
\begin{equation}\label{e3.35}
\sum_{k\in I}\left(\frac{\delta(y)}{|x_k-y|}\right)^{\beta}\lesssim 1.
\end{equation}
Moreover, it is easy to find that
\begin{equation}\label{e3.36}
\left(\fint_{\frac{1}{4}B_y}|\delta\nabla u_3|^2\,dw\right)^{\frac{1}{2}}
\lesssim\sum_{k\in I}\left(\fint_{\frac{1}{4}B_y}\left|\delta\nabla u_{3,k}\right|^2\,dw\right)^{\frac{1}{2}},
\end{equation}
where, for any $k\in I$ and any $w\in B_y/4$,
$$
u_{3,k}(w):=\int_{D_k}\boldsymbol{F}(x)\cdot\nabla_{x}G_R(x,w)\,dx.
$$
Fix $k\in I$. For $u_{3,k}$, noticing the fact that $u_{3,k}$ is a weak solution to the Robin problem \eqref{e3.30} with $\boldsymbol{F}$ replaced by $\boldsymbol{F}\mathbf{1}_{D_k}$,
using Caccioppoli's inequality as in Lemma \ref{Cacciopoli} and the H\"older regularity estimate as in Lemma \ref{Holder},
we conclude that there exists $\kappa\in(0,1]$ such that
\begin{align}\label{e3.37}
\left(\fint_{\frac{1}{4}B_y}|\delta\nabla u_{3,k}|^2\,dw\right)^{\frac{1}{2}}
&\lesssim\underset{\frac{1}{2}B_y}{\mathrm{osc}}\,u_{3,k}
\lesssim\left(\frac{\delta(y)}{|x_k-y|}\right)^{\kappa}\underset{B_{y,k}
\cap\Omega}{\mathrm{osc}}u_{3,k}\notag\\
&\lesssim\left(\frac{\delta(y)}{|x_k-y|}
\right)^{\kappa}\sup_{w\in B_{y,k}\cap\Omega}|u_{3,k}(w)|,
\end{align}
where $B_{y,k}\supset B_y$ is the largest ball centered at $y$ such that $B_{y,k}\cap4B_k=\emptyset$.
A simple computation yields that, for each $k\in I$, $10B_k\cap B_y=\emptyset$, and hence $B_{y,k}$ is well-defined.
Therefore, from Carleson's inequality \eqref{e2.1}, the fact that $({\rm Loc}_{p_1})_L$ holds, and Lemma \ref{Green}(v), we deduce that
\begin{align}\label{e3.38}
\sup_{w\in B_{y,k}\cap\Omega}|u_{3,k}(w)|
&\lesssim\sup_{w\in B_{y,k}\cap\Omega}\int_{D_k}|\boldsymbol{F}(x)\cdot\nabla_{x}
G_R(x,w)|\,dx\nonumber\\ \nonumber
&\lesssim\sup_{w\in B_{y,k}\cap\Omega}\left\|\widetilde{\mathcal{C}}_1^{\frac{5}{8}}
(\delta\boldsymbol{F}\mathbf{1}_{B_k})\right\|_{L^{p_1'}(\partial\Omega,\sigma)}
\left\|\widetilde{\mathcal{N}}\left(\nabla G_R(\cdot,w)\mathbf{1}_{B_k}\right)\right\|_{L^{p_1}(\partial\Omega,\sigma)}\\ \nonumber
&\lesssim\sup_{w\in B_{y,k}\cap\Omega}\left\|\widetilde{\mathcal{C}}_1^{\frac{5}{8}}\left(\delta\boldsymbol{F}
\mathbf{1}_{B_k}\right)\right\|_{L^{p_1'}(\partial\Omega,\sigma)}
|x_k-y|^{\frac{n-1}{p_1}}\\ \notag &\quad\times\left\{\fint_{2B_k\cap\Omega}[|\nabla_{x}G_R(x,w)|+|G_R(x,w)|]\,dx\right\}\\
&\lesssim\left\|\widetilde{\mathcal{C}}_1^{\frac{5}{8}}\left(\delta\boldsymbol{F}
\mathbf{1}_{B_k}\right)\right\|_{L^{p_1'}(\partial\Omega,\sigma)}|x_k-y|^{-\frac{n-1}{p_1'}}.
\end{align}
Moreover, it was proved in \cite[p.\,35]{fl24} that
$$
\left\|\widetilde{\mathcal{C}}_1^{\frac{5}{8}}\left(\delta\boldsymbol{F}
\mathbf{1}_{B_k}\right)\right\|_{L^{p_1'}(\partial\Omega,\sigma)}
\lesssim\left(\int_{2B_k\cap\partial\Omega}
\left[\widetilde{\mathcal{C}}_1^{\frac{5}{8}}(\delta\boldsymbol{F})
\right]^{p_1'}\,d\sigma\right)^{\frac{1}{p_1'}},
$$
which, together with \eqref{e3.38}, implies that
\begin{align*}
\sup_{w\in B_{y,k}\cap\Omega}|u_{3,k}(w)|&\lesssim\left(\int_{2B_k\cap\partial\Omega}
\left[\widetilde{\mathcal{C}}_1^{\frac{5}{8}}
(\delta\boldsymbol{F})\right]^{p_1'}\,d\sigma\right)^{\frac{1}{p_1'}}
|x_k-y|^{-\frac{n-1}{p_1'}}\\ \nonumber
&\lesssim\left(\fint_{B(z,10|x_k-y|)\cap\partial\Omega}\left\{
\widetilde{\mathcal{C}}_1^{\frac{5}{8}}(\delta\boldsymbol{F})\right\}^{p_1'}
\,d\sigma\right)^{\frac{1}{p_1'}}\\ \nonumber
&\lesssim\left\{\mathcal{M}\left(\left[\widetilde{\mathcal{C}}_1^{\frac{5}{8}}
(\delta\boldsymbol{F})\right]^{p_1'}\right)(z)\right\}^{\frac{1}{p_1'}}.
\end{align*}
This, combined with \eqref{e3.35}, \eqref{e3.36}, and \eqref{e3.37}, further yields that
\begin{align*}
\left(\fint_{\frac{1}{4}B_y}|\delta\nabla u_3|^2\,dw\right)^{\frac{1}{2}}
&\lesssim\sum_{k\in I}\left(\frac{\delta(y)}{|x_k-y|}\right)^{\kappa}\left\{\mathcal{M}\left(
\left[\widetilde{\mathcal{C}}_1^{\frac{5}{8}}(\delta\boldsymbol{F})\right]^{p_1'}
\right)(z)\right\}^{\frac{1}{p_1'}}\\
&\lesssim\left\{\mathcal{M}\left(\left[\widetilde{\mathcal{C}}_1^{\frac{5}{8}}
(\delta\boldsymbol{F})\right]^{p_1'}\right)(z)\right\}^{\frac{1}{p_1'}},
\end{align*}
which, together with \eqref{e3.33} and \eqref{e3.34}, implies that the desired estimate \eqref{e3.31} holds.
This finishes the proof of Lemma \ref{LocwPR}.
\end{proof}

Now, Theorem \ref{StrongLocWeak} is a simple consequence of
Lemmas \ref{strongerLoc}, \ref{selfimprove}, and \ref{LocwPR}.

\begin{proof}[Proof of Theorem \ref{StrongLocWeak}]
By Lemmas \ref{strongerLoc}, \ref{selfimprove}, and \ref{LocwPR},
we conclude that all the conclusions of Theorem \ref{StrongLocWeak} hold.
\end{proof}

\section{Proof of Theorem \ref{RandPR} \label{s4}}

In this section, we give the proof of Theorem \ref{RandPR}.
We begin with proving the equivalence between the solvability of the Robin problems $({\rm R}_p)_L$ and $({\rm \widetilde{R}}_p)_L$.

\begin{proposition}\label{RobinEquiv}
Let $n\ge2$, $p\in(1,\infty)$, $\Omega\subset\mathbb{R}^n$ be a bounded one-sided ${\rm CAD}$, $L:=-\mathrm{div\,}(A\nabla\cdot)$
be a uniformly elliptic operator,
and $\alpha$ be the same as in \eqref{e1.3}. Then the solvability of $({\rm R}_p)_L$ and $({\rm \widetilde{R}}_p)_L$ are equivalent.
\end{proposition}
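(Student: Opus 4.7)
The direction $(\widetilde{\mathrm{R}}_p)_L\Rightarrow(\mathrm{R}_p)_L$ is immediate from Definition \ref{defR}. For the converse, I would assume that $(\mathrm{R}_p)_L$ is solvable, let $u$ be the weak solution to \eqref{e1.6}, and seek the additional bound $\|\widetilde{\mathcal N}(u)\|_{L^p(\partial\Omega,\sigma)}\lesssim\|f\|_{L^p(\partial\Omega,\sigma)}$. The structural new ingredient, absent from the Neumann case, is the hypothesis $\alpha\ge c_0$ on $E_0\subset\partial\Omega$ in \eqref{e1.3}: it breaks the translation invariance of the problem and pins the additive freedom in $u$ down quantitatively.

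\emph{Step 1 (trace on $E_0$).} Since $\widetilde{\mathcal N}(\nabla u)\in L^p(\partial\Omega,\sigma)$, standard Fatou-type theory for weak solutions of $Lu=0$ yields non-tangential limits $u^*$ and $(A\nabla u)\cdot\boldsymbol\nu$ $\sigma$-a.e.~on $\partial\Omega$, with the latter pointwise bounded by $\mu_0^{-1}\widetilde{\mathcal N}(\nabla u)$. Substituting into the Robin condition $\frac{\partial u}{\partial\boldsymbol\nu}+\alpha u=f$ and dividing by $\alpha\ge c_0$ on $E_0$ gives $|u^*|\lesssim|f|+\widetilde{\mathcal N}(\nabla u)$ $\sigma$-a.e.~on $E_0$, hence
$$
\|u^*\|_{L^p(E_0,\sigma)}\lesssim\|f\|_{L^p(\partial\Omega,\sigma)}+\|\widetilde{\mathcal N}(\nabla u)\|_{L^p(\partial\Omega,\sigma)}\lesssim\|f\|_{L^p(\partial\Omega,\sigma)}.
$$

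\emph{Step 2 (pointwise control of $\widetilde{\mathcal N}(u)$).} I would prove the pointwise inequality
$$
\widetilde{\mathcal N}(u)(x)\lesssim\widetilde{\mathcal N}^{a,c}(\nabla u)(x)+\mathcal M\!\left(u^*\mathbf 1_{E_0}\right)(x)\qquad\sigma\text{-a.e. }x\in\partial\Omega,
$$
for some enlarged aperture $a>2$ and smaller radius $c<1/4$, where $\mathcal M$ is the Hardy--Littlewood maximal operator on $\partial\Omega$. Fix $y\in\gamma(x)$ and set $B_y:=B(y,\delta(y)/4)$; Poincar\'e's inequality (Lemma \ref{l2.2}) gives
$$
\left(\fint_{B_y}|u|^2\,dz\right)^{\frac12}\le|u_{B_y}|+C\delta(y)\left(\fint_{2B_y}|\nabla u|^2\,dz\right)^{\frac12},
$$
and the gradient term is dominated by $\mathrm{diam}(\Omega)\cdot\widetilde{\mathcal N}^{a,c}(\nabla u)(x)$. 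To handle $|u_{B_y}|$, fix once and for all a surface ball $\Delta_0\subset E_0$ and an associated corkscrew point $z_0\in\Omega$ via Definition \ref{CSandHC}(a); by the Harnack chain condition (Definition \ref{CSandHC}(c)), connect $B_y$ to $B(z_0,\delta(z_0)/4)$ through a chain of Whitney balls of uniformly bounded overlap, and telescope the differences $|u_{B_k}-u_{B_{k+1}}|$ using Poincar\'e on each pair. The cumulative cost is again bounded by $\widetilde{\mathcal N}^{a,c}(\nabla u)(x)$ after enlarging the aperture, while the terminal value $|u_{B(z_0,\delta(z_0)/4)}|$ is, by one further Harnack-chain/trace step, controlled by the average of $u^*$ over $\Delta_0$ and therefore by $\mathcal M(u^*\mathbf 1_{E_0})(x)$.

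\emph{Step 3 (integration).} Taking $L^p(\partial\Omega,\sigma)$ norms of the pointwise estimate, using Lemma \ref{TentEqui} to absorb the aperture $a$ and radius $c$ and Lemma \ref{l2.1} for the $L^p$-boundedness of $\mathcal M$, and combining with Step 1, I obtain
$$
\left\|\widetilde{\mathcal N}(u)\right\|_{L^p(\partial\Omega,\sigma)}\lesssim\left\|\widetilde{\mathcal N}(\nabla u)\right\|_{L^p(\partial\Omega,\sigma)}+\|u^*\|_{L^p(E_0,\sigma)}\lesssim\|f\|_{L^p(\partial\Omega,\sigma)},
$$
which together with the assumed gradient estimate gives the solvability of $(\widetilde{\mathrm R}_p)_L$. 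The main obstacle will be Step 2: organising the Harnack chaining so that the implicit constant depends only on the CAD parameters of $\Omega$, on $c_0$, and on $\sigma(E_0)$, and making rigorous the passage from interior averages at arbitrary $y\in\Omega$ to boundary averages over $\Delta_0\subset E_0$ when $y$ lies near boundary points far from $E_0$.
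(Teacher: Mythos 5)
Your easy direction and the overall shape of the hard direction (control $\widetilde{\mathcal N}(u)$ by $\widetilde{\mathcal N}(\nabla u)$ plus an ``anchor'' value of $u$, then integrate) match the paper, and your Step 2 chaining is close in spirit to the paper's estimate \eqref{e4.1}, which is proved via the corkscrew condition and an oscillation bound of the type \cite[(5.32)]{hs25}. But your anchoring mechanism has a genuine gap. Step 1 reads the Robin condition pointwise $\sigma$-a.e.\ on $E_0$ and divides by $\alpha$, which presupposes (a) that the outward normal $\boldsymbol\nu$ and the conormal derivative $(A\nabla u)\cdot\boldsymbol\nu$ exist as non-tangential limits $\sigma$-a.e., and (b) that the boundary condition holds in this pointwise sense. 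Neither is available here: on a one-sided CAD the boundary is only Ahlfors regular and need not be rectifiable, so $\boldsymbol\nu$ need not even be defined $\sigma$-a.e., and the Robin condition in \eqref{e1.4} is only imposed through the weak formulation \eqref{e1.5}. A Fatou-type theorem giving a.e.\ non-tangential limits of $\nabla u$ (let alone of the conormal derivative, pointwise dominated by $\widetilde{\mathcal N}(\nabla u)$) is not established in this generality and cannot be invoked as ``standard''. A second, smaller defect is in Step 2: $E_0$ is merely a measurable set with $\sigma(E_0)>0$, so you cannot ``fix a surface ball $\Delta_0\subset E_0$'', and the terminal ``trace step'' from the interior average at $z_0$ to boundary averages of $u^*$ over $E_0$ would itself need the unproven Fatou-type machinery.

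The paper sidesteps all of this: the coercivity coming from $\alpha\ge c_0$ on $E_0$ is used only through the existence and pointwise upper bound of the Robin Green's function (Lemma \ref{Green}(iii)--(iv)). Writing $u(x)=\int_{\partial\Omega}G_R(y,x)f(y)\,d\sigma(y)$ and using \eqref{e2.12}, one gets $\sup_{x\in\Omega_{\varepsilon_0}}|u(x)|\lesssim\|f\|_{L^1(\partial\Omega,\sigma)}\lesssim\|f\|_{L^p(\partial\Omega,\sigma)}$, which serves as the anchor in \eqref{e4.1} without ever interpreting the boundary condition pointwise. If you want to salvage your route, you would have to replace Step 1 by an argument of this kind (or prove the requisite Fatou theory, which is far beyond the scope of this proposition).
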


\begin{proof}
Obviously, the solvability of $({\rm \widetilde{R}}_{p})_{L}$ implies the solvability of $({\rm R}_p)_{L}$.

Now, we prove that, if $({\rm R}_p)_{L}$ is solvable, then $({\rm \widetilde{R}}_{p})_{L}$ is also solvable.
Let $f\in C_{\rm c}(\partial\Omega)$ and $u$ be the weak solution to the Robin problem
\begin{equation*}
\begin{cases}
Lu=0\ \ & \text{in}\ \ \Omega,\\
\displaystyle\frac{\partial u}{\partial\boldsymbol{\nu}}+\alpha  u=f \ \ & \text{on}\ \ \partial\Omega.
\end{cases}
\end{equation*}
We first claim that there exist positive constants $\varepsilon_0$, $a_1$, and $a_2$, depending only on the corkscrew constant of $\Omega$,
such that, for any $z\in\partial\Omega$,
\begin{equation}\label{e4.1}
\mathcal{N}^{a_1}(u)(z)\lesssim\sup_{x\in\Omega_{\varepsilon_0}}|u(x)|+\widetilde{\mathcal{N}}^{a_2}(\nabla u)(z),
\end{equation}
where $\mathcal{N}^{a_1}$ denotes the non-tangential maximal function associated with cone of aperture $a_1$, $\widetilde{\mathcal{N}}^{a_2}$ denotes the
modified non-tangential maximal function associated with cone of aperture $a_2$ (see Definition \ref{NAC}),
and $\Omega_{\varepsilon_0}:=\{z\in\Omega:\mathrm{dist\,}(z,\partial\Omega)\ge\varepsilon_0\}$.
Indeed, fix $r_0\in(0,\mathrm{diam\,}(\Omega))$. Since $\Omega$ satisfies the $C_1$-interior corkscrew condition with some $C_1\in(0,1/2)$,
it follows that, for any given $z\in\partial\Omega$,
there exists $y_z\in\Omega\cap B(z,r_0)$ such that $B(y_z,C_1r_0)\subset B(z,r_0)\cap\Omega$, which implies that $\delta(y_z)\ge C_1r_0$ and hence
$$
|y_z-z|<r_0-C_1r_0\le\frac{1-C_1}{C_1}\delta(y_z).
$$
Take $\varepsilon_0:=C_1r_0$ and $a_1:=(1-C_1)/C_1$. Then $y_z\in\gamma_{a_1}(z)\cap\Omega_{\varepsilon_0}$.
Furthermore, for any $y\in\gamma_{a_1}(z)\setminus\Omega_{\varepsilon_0}$, we have
$$|y-z|<a_1\delta(y)\le\frac{1-C_1}{C_1}\varepsilon_0<r_0,$$
which implies that $y\in B(z,r_0)$.
Moreover, similar to the proof of  \cite[(5.32)]{hs25},
we find that there exists a positive constant $a_2\in(a_1,\infty)$, depending on the corkscrew constant of $\Omega$, such that,
for any $z\in\partial\Omega$ and $x_1,x_2\in\gamma_{a_2}(z)\cap B(z,r_0)$,
$$
|u(x_1)-u(x_2)|\lesssim r_0\widetilde{\mathcal{N}}^{a_2}(\nabla u)(z),
$$
which further implies that, for any $y\in\gamma_{a_1}(z)\setminus\Omega_{\varepsilon_0}$,
$$
|u(y)|\le|u(y)-u(y_z)|+|u(y_z)|\lesssim r_0\widetilde{\mathcal{N}}^{a_2}(\nabla u)(z)+\sup_{x\in\Omega_{\varepsilon_0}}|u(x)|.
$$
Thus, \eqref{e4.1} holds.

Furthermore, from \eqref{e2.10} and \eqref{e2.12}, we deduce that,
for any $x\in\Omega_{\varepsilon_0}$,
$$|u(x)|\le\int_{\partial\Omega}G_R(y,x)|f(y)|\,d\sigma(y)\lesssim
\|f\|_{L^1(\partial\Omega,\sigma)}\lesssim\|f\|_{L^p(\partial\Omega,\sigma)},
$$
which, together with \eqref{RobinEquiv}, implies that
\begin{equation}\label{e4.2}
\left\|\mathcal{N}^{a_1}(u)\right\|_{L^p(\partial\Omega,\sigma)}\lesssim
\left\|\widetilde{\mathcal{N}}^{a_2}(\nabla u)\right\|_{L^p(\partial\Omega,\sigma)}+\|f\|_{L^p(\partial\Omega,\sigma)}.
\end{equation}
Moreover, notice that, for any give $a\in(1,\infty)$ and any $x\in\partial\Omega$, $\widetilde{\mathcal{N}}^{a}(u)(x)\le\mathcal{N}^{a}(u)(x)$,
which, combined with \eqref{e4.2} and Lemma \ref{TentEqui}, further yields
$$
\left\|\widetilde{\mathcal{N}}(u)\right\|_{L^p(\partial\Omega,\sigma)}
\lesssim\left\|\widetilde{\mathcal{N}}(\nabla u)\right\|_{L^p(\partial\Omega,\sigma)}+\|f\|_{L^p(\partial\Omega,\sigma)}.
$$
By this and the solvability of $({\rm R}_p)_L$, we conclude that
$$
\left\|\widetilde{\mathcal{N}}(u)\right\|_{L^p(\partial\Omega,\sigma)}
+\left\|\widetilde{\mathcal{N}}(\nabla u)\right\|_{L^p(\partial\Omega,\sigma)}\lesssim\left\|\widetilde{\mathcal{N}}(\nabla u)\right\|_{L^p(\partial\Omega,\sigma)}+\|f\|_{L^p(\partial\Omega,\sigma)}
\lesssim\|f\|_{L^p(\partial\Omega,\sigma)}.
$$
Thus, $({\rm \widetilde{R}}_p)_L$ is solvable.
This finishes the proof of Proposition \ref{RobinEquiv}.
\end{proof}

To prove Theorem \ref{RandPR}, we divide the conclusion of Theorem \ref{RandPR} into the following four successive lemmas.

\begin{lemma}\label{PRRtoR}
Let $n\ge2$, $\Omega\subset\mathbb{R}^n$ be a bounded one-sided ${\rm CAD}$, $L:=-\mathrm{div\,}(A\nabla\cdot)$ be a uniformly elliptic operator,
$\alpha$ be the same as in \eqref{e1.3}, and $p\in(1,\infty)$. If $({\rm PRR}_{p})_{L}$ is solvable, then $({\rm R}_{p})_{L}$ is solvable.
\end{lemma}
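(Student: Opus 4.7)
The plan is to combine a duality argument with the equivalence $({\rm PRR}_p)_L \Leftrightarrow ({\rm PR}_{p'})_{L^*}$ already established in Theorem \ref{PRandPRR}(i). Fix $f \in C_{\rm c}(\partial\Omega)$ and let $u$ be the weak solution to \eqref{e1.6}. By \eqref{e2.2} of Lemma \ref{NdualC} applied to the vector $\nabla u$ (localized against a compact exhaustion of $\Omega$ to guarantee finiteness, then passed to the limit), together with the density of $L_{\rm c}^{\infty}(\Omega;\mathbb{R}^n)$ in $\widetilde{T}_1^{p'}(\Omega)$ from Lemma \ref{Dense}, it suffices to show that, for every $\boldsymbol{G} \in L^\infty_{\rm c}(\Omega;\mathbb{R}^n)$ with $\|\widetilde{\mathcal{C}}_1(\delta \boldsymbol{G})\|_{L^{p'}(\partial\Omega,\sigma)} \le 1$,
$$\left|\int_\Omega \nabla u \cdot \boldsymbol{G} \,dx\right| \lesssim \|f\|_{L^p(\partial\Omega,\sigma)}.$$

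To produce this bound, I would introduce $v \in W^{1,2}(\Omega)$, the (unique, by Remark \ref{r1.1}) weak solution to the adjoint Poisson--Robin problem
$$L^* v = -\mathrm{div}\,\boldsymbol{G}\ \text{in}\ \Omega, \qquad \frac{\partial v}{\partial \boldsymbol{\nu}} + \alpha v = \boldsymbol{G}\cdot\boldsymbol{\nu}\ \text{on}\ \partial\Omega.$$
Using $v$ as a test function in the weak formulation of $u$'s equation and $u$ as a test function in that of $v$ (both legitimate after extending \eqref{e1.5} from $C_{\rm c}^\infty(\mathbb{R}^n)$ to $W^{1,2}(\Omega)$-test functions via the extension/trace theory on CADs, taking advantage of the integrability of $\alpha$ from \eqref{e1.3}), the symmetry $A^T\nabla v \cdot \nabla u = A\nabla u \cdot \nabla v$ and the cancellation of the $\int_{\partial\Omega}\alpha uv\,d\sigma$ terms yield the key identity
$$\int_\Omega \nabla u \cdot \boldsymbol{G} \,dx \;=\; \int_{\partial\Omega} f\, v \,d\sigma.$$

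The right-hand side is then controlled by Hölder's inequality on $\partial\Omega$ and by the pointwise trace bound $|v|_{\partial\Omega}(x)| \le \widetilde{\mathcal{N}}(v)(x)$ for $\sigma$-a.e.\ $x \in \partial\Omega$, which follows from identifying the $W^{1,2}$-trace of $v$ with its averaged non-tangential limit. Since $({\rm PRR}_p)_L$ is solvable, Theorem \ref{PRandPRR}(i) gives that $({\rm PR}_{p'})_{L^*}$ is solvable, so
$$\|v\|_{L^{p'}(\partial\Omega,\sigma)} \le \left\|\widetilde{\mathcal{N}}(v)\right\|_{L^{p'}(\partial\Omega,\sigma)} \lesssim \left\|\widetilde{\mathcal{C}}_1(\delta\boldsymbol{G})\right\|_{L^{p'}(\partial\Omega,\sigma)}\le 1,$$
and combining the three displayed inequalities closes the estimate.

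The main technical obstacle is justifying the test-function exchange producing the key identity, since $u, v \in W^{1,2}(\Omega)$ are not admissible test functions in the strict sense of \eqref{e1.5}. This requires approximating $u$ and $v$ in $W^{1,2}(\Omega)$-norm (with convergent traces) by restrictions of $C_{\rm c}^\infty(\mathbb{R}^n)$-functions---available via the extension operator on a CAD---and verifying that every term passes to the limit; the only nontrivial term is $\int_{\partial\Omega}\alpha uv\,d\sigma$, whose absolute integrability is secured by the integrability assumption $\alpha\in L^{n-1+\varepsilon_0}(\partial\Omega,\sigma)$ from \eqref{e1.3} together with the Sobolev trace embedding.
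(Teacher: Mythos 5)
Your proposal is correct and follows essentially the same route as the paper's proof: after invoking Theorem \ref{PRandPRR}(i), one dualizes $\widetilde{\mathcal{N}}(\nabla u)$ via \eqref{e2.2} and Lemma \ref{Dense} (with a compact exhaustion for finiteness), introduces the adjoint Poisson--Robin solution $v$ with data $\boldsymbol{G}$, derives the identity $\int_\Omega \nabla u\cdot\boldsymbol{G}\,dx=\int_{\partial\Omega}fv\,d\sigma$, and concludes by H\"older, the bound $\|v\|_{L^{p'}(\partial\Omega,\sigma)}\lesssim\|\widetilde{\mathcal{N}}(v)\|_{L^{p'}(\partial\Omega,\sigma)}$, and the solvability of $({\rm PR}_{p'})_{L^*}$. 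Your additional remarks on justifying $W^{1,2}$ test functions and the trace identification are sensible elaborations of steps the paper treats implicitly.
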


\begin{proof}
Let $p'\in(1,\infty)$ be the H\"older conjugate of $p$.
By Theorem \ref{PRandPRR}, we find that the solvability of $({\rm PRR}_{p})_{L}$ is equivalent to the solvability of $({\rm PR}_{p'})_{L^*}$.
Thus, to prove the conclusion of Lemma \ref{PRRtoR}, it suffices to show that, if
$({\rm PR}_{p'})_{L^*}$ is solvable, then $({\rm R}_{p})_{L}$ is solvable.

Assume that $({\rm PR}_{p'})_{L^*}$ is solvable. Let $f\in L^{p}(\partial\Omega)$ and $u$ be a weak solution to the Robin problem
\begin{equation*}
\begin{cases}
Lu=0 \ \ & \text{in}\ \ \Omega,\\
\displaystyle\frac{\partial u}{\partial\boldsymbol{\nu}}+\alpha  u=f \ \ & \text{on}\ \ \partial\Omega.
\end{cases}
\end{equation*}
We only need to prove
\begin{equation}\label{e4.3}
\left\|\widetilde{\mathcal{N}}(\nabla u)\right\|_{L^{p}(\partial\Omega,\sigma)}\lesssim\|f\|_{L^{p}(\partial\Omega,\sigma)}.
\end{equation}
Let $E$ be a compact set in $\Omega$.
From \eqref{e2.2} and Lemma \ref{Dense}, it follows that
there exists $\boldsymbol{F}:=\boldsymbol{F}_E\in L_{\rm c}^{\infty}(\Omega;\mathbb{R}^n)$
such that
\begin{equation}\label{e4.4}
\left\|\widetilde{\mathcal{C}}_1\left(\delta\boldsymbol{F}\right)
\right\|_{L^{p'}(\partial\Omega,\sigma)}\lesssim1
\end{equation}
and
\begin{equation}\label{e4.5}
\left\|\widetilde{\mathcal{N}}\left(\nabla u\mathbf{1}_E\right)\right\|_{L^{p}(\partial \Omega,\sigma)}
\lesssim \int_{\Omega}\nabla u\cdot\boldsymbol{F}\,dx.
\end{equation}
Let $v$ be the weak solution to the Robin problem
\begin{equation*}
\begin{cases}
L^*v=-\mathrm{div\,}\boldsymbol{F}\ \ & \text{in}\ \ \Omega,\\
\displaystyle\frac{\partial v}{\partial\boldsymbol{\nu}}+\alpha  v=\boldsymbol{F}\cdot\boldsymbol{\nu} \ \ & \text{on}\ \ \partial\Omega.
\end{cases}
\end{equation*}
Then we have
\begin{align*}
\int_{\Omega}\nabla u\cdot\boldsymbol{F}\,dx
&=\int_{\Omega}A^T\nabla v\cdot\nabla u\,dx+\int_{\partial\Omega}\alpha  vu\,d\sigma\\ \nonumber
&=\int_{\Omega}A\nabla u\cdot\nabla v\,dx+\int_{\partial\Omega}\alpha  vu\,d\sigma=\int_{\partial\Omega}fv\,d\sigma.
\end{align*}
By this, H\"older's inequality, the solvability of $({\rm PR}_{p'})_{L^*}$,
\eqref{e4.4}, and \eqref{e4.5}, we conclude that
\begin{align}\label{e4.6}
\left\|\widetilde{\mathcal{N}}(\nabla u\mathbf{1}_E)\right\|_{L^{p}(\partial \Omega,\sigma)}
&\lesssim\left\|f\right\|_{L^{p}(\partial\Omega,\sigma)}\left\|v\right\|_{L^{p'}(\partial\Omega,\sigma)}
\lesssim\left\|f\right\|_{L^{p}(\partial\Omega,\sigma)}
\left\|\widetilde{\mathcal{N}}(v)\right\|_{L^{p'}(\partial\Omega,\sigma)}\nonumber\\
&\lesssim\left\|f\right\|_{L^{p}(\partial\Omega,\sigma)}
\left\|\widetilde{\mathcal{C}}_1(\delta\boldsymbol{F})\right\|_{L^{p'}(\partial\Omega,\sigma)}
\lesssim\left\|f\right\|_{L^{p}(\partial\Omega,\sigma)}.
\end{align}
Letting $E\uparrow\Omega$ in \eqref{e4.6}, we then find that \eqref{e4.3} holds.
This finishes the proof of Lemma \ref{PRRtoR}.
\end{proof}

\begin{lemma}\label{RtoPRR}
Let $n\ge2$, $\Omega\subset\mathbb{R}^n$ be a bounded one-sided {\rm CAD}, $p\in(1,\infty)$, $p'$ be the H\"older conjugate of $p$,
$L:=-\mathrm{div}(A\nabla\cdot)$ be a uniformly elliptic operator, and $\alpha$ be the same as in \eqref{e1.3}.
If $({\rm R}_{p})_{L}$ and $({\rm D}_{p'})_{L^*}$ are solvable, then $({\rm PRR}_{p})_{L}$ is solvable.
\end{lemma}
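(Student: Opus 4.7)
The plan is to invoke Theorem \ref{PRandPRR}(i) to reduce the solvability of $({\rm PRR}_p)_L$ to the special case $\boldsymbol{F}=\mathbf{0}$, and then to split the Robin solution into a Dirichlet piece (handled via \eqref{e1.12}) and a homogeneous Robin piece (handled via $({\rm R}_p)_L$), with a duality step supplying the $L^p$ control of the boundary data produced by the splitting.

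More precisely, I would fix $h\in L_{\rm c}^{\infty}(\Omega)$, let $u$ be the weak solution to $Lu=h$ in $\Omega$ with $\frac{\partial u}{\partial\boldsymbol{\nu}}+\alpha u=0$ on $\partial\Omega$, and decompose $u=u_1+u_2$, where $u_1\in W^{1,2}_0(\Omega)$ solves the Dirichlet problem $Lu_1=h$ in $\Omega$ with $u_1=0$ on $\partial\Omega$, and $u_2:=u-u_1$ satisfies $Lu_2=0$ in $\Omega$ together with the Robin condition $\frac{\partial u_2}{\partial\boldsymbol{\nu}}+\alpha u_2=-\frac{\partial u_1}{\partial\boldsymbol{\nu}}$ on $\partial\Omega$ (using $u_1|_{\partial\Omega}=0$ to cancel the $\alpha u_1$ term). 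For the Dirichlet piece $u_1$, the equivalence \eqref{e1.12} established in \cite{mpt22} says that $({\rm D}_{p'})_{L^*}$ is equivalent to $({\rm PDR}_p)_L$, and applying $({\rm PDR}_p)_L$ directly to $u_1$ (with $\boldsymbol{F}=\mathbf{0}$) yields
$$\left\|\widetilde{\mathcal{N}}(\nabla u_1)\right\|_{L^p(\partial\Omega,\sigma)}\lesssim\left\|\widetilde{\mathcal{C}}_1(\delta h)\right\|_{L^p(\partial\Omega,\sigma)}.$$

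For the homogeneous Robin piece, I plan to apply $({\rm R}_p)_L$ (extended from $C_{\rm c}(\partial\Omega)$ to $L^p(\partial\Omega,\sigma)$ data by density) so that
$$\left\|\widetilde{\mathcal{N}}(\nabla u_2)\right\|_{L^p(\partial\Omega,\sigma)}\lesssim\left\|\frac{\partial u_1}{\partial\boldsymbol{\nu}}\right\|_{L^p(\partial\Omega,\sigma)},$$
and then to estimate the right-hand side by duality. For $f\in C_{\rm c}(\partial\Omega)$, let $\tilde{f}$ denote the $L^*$-harmonic extension, that is, $L^*\tilde{f}=0$ in $\Omega$ with $\tilde{f}|_{\partial\Omega}=f$. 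Integrating by parts using $u_1\in W^{1,2}_0(\Omega)$ and the equations for $u_1,\tilde{f}$ gives the identity
$$\int_{\partial\Omega}\frac{\partial u_1}{\partial\boldsymbol{\nu}}\,f\,d\sigma=-\int_{\Omega}h\,\tilde{f}\,dx.$$
Carleson duality (Lemma \ref{NdualC}) combined with the hypothesis $({\rm D}_{p'})_{L^*}$ bounds the right side by $\|\widetilde{\mathcal{C}}_1(\delta h)\|_{L^p(\partial\Omega,\sigma)}\|\widetilde{\mathcal{N}}(\tilde{f})\|_{L^{p'}(\partial\Omega,\sigma)}\lesssim\|\widetilde{\mathcal{C}}_1(\delta h)\|_{L^p(\partial\Omega,\sigma)}\|f\|_{L^{p'}(\partial\Omega,\sigma)}$; taking the supremum over $f$ with $\|f\|_{L^{p'}(\partial\Omega,\sigma)}\le 1$ identifies $\frac{\partial u_1}{\partial\boldsymbol{\nu}}$ as an element of $L^p(\partial\Omega,\sigma)$ with the desired bound. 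The triangle inequality applied to $u=u_1+u_2$ then closes the argument.

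The main obstacle I anticipate is the rigorous interpretation of $\frac{\partial u_1}{\partial\boldsymbol{\nu}}$ on the rough boundary $\partial\Omega$ together with the corresponding integration-by-parts identity: since $\Omega$ is only a one-sided CAD, this conormal derivative a priori lives only as a distribution paired against traces of $W^{1,2}(\Omega)$ functions, and the duality step must be executed so that it genuinely produces an $L^p(\partial\Omega,\sigma)$ function. A secondary technical point is the density-based extension of $({\rm R}_p)_L$ from its $C_{\rm c}(\partial\Omega)$ formulation (Definition \ref{defR}) to general $L^p(\partial\Omega,\sigma)$ boundary data, which should follow by a standard approximation argument once the well-posedness remark after Definition \ref{defR} (see Remark \ref{r1.1}) is combined with Proposition \ref{RobinEquiv}.
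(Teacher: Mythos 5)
Your overall architecture coincides with the paper's: reduce to $\boldsymbol{F}=\mathbf{0}$ via Theorem \ref{PRandPRR}, split $u=u_1+u_2$ with $u_1$ the Dirichlet solution of $Lu_1=h$, bound $\widetilde{\mathcal{N}}(\nabla u_1)$ through the equivalence $({\rm D}_{p'})_{L^*}\Leftrightarrow({\rm PDR}_p)_L$ from \cite{mpt22}, show by duality that the boundary datum inherited by the homogeneous Robin piece lies in $L^p(\partial\Omega,\sigma)$ with norm $\lesssim\|\widetilde{\mathcal{C}}_1(\delta h)\|_{L^p(\partial\Omega,\sigma)}$, and finish with $({\rm R}_p)_L$. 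Where you diverge is the duality mechanism: the paper never touches $\frac{\partial u_1}{\partial\boldsymbol{\nu}}$ directly, but defines the functional $\ell(\phi):=\int_\Omega A\nabla u_D\cdot\nabla\Phi\,dx-\int_\Omega h\Phi\,dx$ over arbitrary $W^{1,2}\cap{\rm Lip}$ extensions $\Phi$, estimates it by choosing the Varopoulos extension of \cite{mz23} (so that both $\|\widetilde{\mathcal{N}}(\Phi)\|_{L^{p'}}$ and $\|\widetilde{\mathcal{C}}_1(\delta\nabla\Phi)\|_{L^{p'}}$ are controlled by $\|\phi\|_{L^{p'}}$) together with $\|\widetilde{\mathcal{N}}(\nabla u_D)\|_{L^p}\lesssim\|\widetilde{\mathcal{C}}_1(\delta h)\|_{L^p}$ and Lemma \ref{NdualC}, and then realizes the Robin piece explicitly as $v(x)=\int_{\partial\Omega}G_R(y,x)g_D(y)\,d\sigma(y)$, proving $u_D-u-v=0$ by a uniqueness argument. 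You instead evaluate the same functional on the $L^*$-harmonic extension $\tilde f$, which kills the gradient term and lets you invoke $({\rm D}_{p'})_{L^*}$ directly; this is a genuine (and in principle slightly leaner) variant.

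The cost is exactly at the two points you flag, and they are not cosmetic. First, for $f$ merely in $C_{\rm c}(\partial\Omega)$ the $L^*$-harmonic extension need not belong to $W^{1,2}(\Omega)$, so your Green identity $\int_{\partial\Omega}\frac{\partial u_1}{\partial\boldsymbol{\nu}}f\,d\sigma=-\int_\Omega h\tilde f\,dx$ is not justified as written; you must restrict to (say) Lipschitz $f$, for which the energy solution exists, and additionally invoke the compatibility of the energy solution with the solution to which the $({\rm D}_{p'})_{L^*}$ estimate applies before using $\|\widetilde{\mathcal{N}}(\tilde f)\|_{L^{p'}}\lesssim\|f\|_{L^{p'}}$; density of Lipschitz functions in $L^{p'}(\partial\Omega,\sigma)$ then recovers the representation of the conormal functional by a $g\in L^p$. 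Second, having produced $g$, you still need to verify that $u_2$ is the weak Robin solution with datum $g$ in the sense of \eqref{e1.5} and that $({\rm R}_p)_L$, stated for $C_{\rm c}(\partial\Omega)$ data, applies to it; the paper's Green-function representation of $v$ and the uniqueness step (testing the weak formulation of $w:=u_D-u-v$ against $\Phi:=w$) are precisely the devices that make this identification rigorous, and your appeal to ``a standard approximation argument'' plus Remark \ref{r1.1} (which only covers $f\equiv0$) and Proposition \ref{RobinEquiv} does not yet supply it, especially for $p<2$ where $L^p$ convergence of the data does not give energy convergence. So: same strategy, workable variant of the duality step, but the two technical gaps you anticipate are real and are exactly what the paper's Varopoulos-extension and Green-function arguments are there to close.
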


\begin{proof}
Let $h\in L_{\rm c}^{\infty}(\Omega)$ and $u$ be the weak solution to the Robin problem
\begin{equation}\label{e4.7}
\begin{cases}
Lu=h\ \ & \text{in}\ \ \Omega,\\
\displaystyle\frac{\partial u}{\partial\boldsymbol{\nu}}+\alpha  u=0 \ \ & \text{on}\ \ \partial\Omega.
\end{cases}
\end{equation}
Based on Theorem \ref{PRandPRR}, it suffices to prove
\begin{equation}\label{e4.8}
\left\|\widetilde{\mathcal{N}}(\nabla u)\right\|_{L^{p}(\partial\Omega,\sigma)}\lesssim
\left\|\widetilde{\mathcal{C}}_1(\delta h)\right\|_{L^{p}(\partial\Omega,\sigma)}.
\end{equation}

Let $u_D\in W^{1,2}_0(\Omega)$ be the weak solution to the Dirichlet problem
\begin{equation}\label{e4.9}
\begin{cases}
Lu_D=h\ \ & \text{in}\ \ \Omega,\\
u_D=0 \ \ & \text{on}\ \ \partial\Omega.
\end{cases}
\end{equation}
We claim that there exists some $g\in L^p(\partial\Omega,\sigma)$ such that, for any $x\in\Omega$,
\begin{equation}\label{e4.10}
u(x)-u_D(x)=\int_{\partial\Omega}G_R(y,x)g(y)\,d\sigma(y),
\end{equation}
where $G_R$ is Green's function associated with the Robin problem.
Indeed, define the linear functional $\ell$ by setting, for any $\phi\in {\rm Lip}(\partial\Omega)$,
$$
\ell(\phi):=\int_{\Omega}A\nabla u_D\cdot\nabla\Phi\,dx-\int_{\Omega}h\Phi\,dx,
$$
where $\Phi$ is an extension of $\phi$ in $W^{1,2}(\Omega)\cap {\rm Lip}(\Omega)$.
Here,
$${\rm Lip}(\partial\Omega):=\left\{\phi\in L^\infty(\partial\Omega):[\phi]_{{\rm Lip}(\partial\Omega)}
:=\sup_{\genfrac{}{}{0pt}{}{x,y\in\partial\Omega}{x\neq y}}\frac{|\phi(x)-\phi(y)|}{|x-y|}<\infty\right\}$$
and the definition of ${\rm Lip}(\Omega)$ is similar.
We point out that the functional $\ell$ is well-defined.
Indeed, for any two extensions $\Phi_1,\Phi_2$ of $\phi$, since $u_D$ is a weak solution of \eqref{e4.9} and
$\Phi_1-\Phi_2\in W_0^{1,2}(\Omega)$, it follows that
$$
\int_{\Omega}A\nabla u_D\cdot\nabla(\Phi_1-\Phi_2)\,dx-\int_{\Omega}h(\Phi_1-\Phi_2)\,dx=0.
$$
Therefore, we can take $\Phi$ as the Varopoulos extension of $\phi$ constructed in \cite[Theorem 1.4]{mz23}
(see also \cite[Theorem 1.1]{hr18}).
By \cite[Theorem 1.4]{mz23}, we find that there exists a positive constant $C$ independent of $\phi$ such that
\begin{equation}\label{e4.11}
\left\|\widetilde{\mathcal{N}}(\Phi)\right\|_{L^{p'}(\partial\Omega,\sigma)}+
\left\|\widetilde{\mathcal{C}}_1(\delta\nabla\Phi)\right\|_{L^{p'}(\partial\Omega,\sigma)}
\le C\|\phi\|_{L^{p'}(\partial\Omega,\sigma)}.
\end{equation}
Moreover, from the assumption that $({\rm D}_{p'})_{L^*}$ is solvable and the equivalence of the solvability of $({\rm D}_{p'})_{L^*}$
and the $L^p$ Poisson--Dirichlet-regularity problem proved in \cite[Theorem 1.22]{mpt22} (see also \cite[Theorem 1.3]{fl24}), we infer that
\begin{equation}\label{e4.12}
\left\|\widetilde{\mathcal{N}}(\nabla u_D)\right\|_{L^{p}(\partial\Omega,\sigma)}\lesssim
\left\|\widetilde{\mathcal{C}}_1(\delta h)\right\|_{L^{p}(\partial\Omega,\sigma)},
\end{equation}
which, together with \eqref{e2.1} and \eqref{e4.11}, further implies that
\begin{align*}
|\ell(\phi)|
&\lesssim\left\|\widetilde{\mathcal{N}}(\nabla u_D)\right\|_{L^{p}(\partial\Omega,\sigma)}
\left\|\widetilde{\mathcal{C}}_1(\delta\nabla\Phi)\right\|_{L^{p'}(\partial\Omega,\sigma)}
+\left\|\widetilde{\mathcal{C}}_1(\delta h)\right\|_{L^{p}(\partial\Omega,\sigma)}
\left\|\widetilde{\mathcal{N}}(\Phi)\right\|_{L^{p'}(\partial\Omega,\sigma)}\\\nonumber
&\lesssim\left\|\widetilde{\mathcal{C}}_1(\delta h)\right\|_{L^{p}(\partial\Omega,\sigma)}
\|\phi\|_{L^{p'}(\partial\Omega,\sigma)}.
\end{align*}
Thus, $\ell$ is a bounded functional on $L^{p'}(\partial\Omega,\sigma)$ and there exists $g_D\in L^{p}(\partial\Omega,\sigma)$ such that
$\ell(\phi)=\int_{\partial\Omega}g_D\phi\,d\sigma$ and
\begin{equation}\label{e4.13}
\|g_D\|_{L^{p}(\partial\Omega,\sigma)}\lesssim\left\|\widetilde{\mathcal{C}}_1(\delta h)\right\|_{L^{p}(\partial\Omega,\sigma)}.
\end{equation}
For any $x\in\Omega$, define
$$
v(x):=\int_{\partial\Omega}G_R(y,x)g_D(y)\,d\sigma(y).
$$
Then $v$ is a weak solution to the Robin problem
\begin{equation*}
\begin{cases}
Lv=0\ \ & \text{in}\ \ \Omega,\\
\displaystyle\frac{\partial v}{\partial\boldsymbol{\nu}}+\alpha  v=g_D \ \ & \text{on}\ \ \partial\Omega.
\end{cases}
\end{equation*}
Therefore, we have
\begin{align*}
\int_{\Omega}A\nabla v\cdot\nabla\Phi\,dx+\int_{\partial\Omega}\alpha  v\phi\,d\sigma=\int_{\partial\Omega}g_D\phi\,d\sigma.
\end{align*}
Moreover, since $u$ is a weak solution to the Robin problem \eqref{e4.7}, it follows that, for any $\Phi\in W^{1,2}(\Omega)$
with $\Phi|_{\partial\Omega}=\phi$,
\begin{align*}
&\int_{\Omega}A\nabla(u_D-u)\cdot\nabla\Phi\,dx\\ \notag
&\quad=\ell(\phi)-\left[\int_{\Omega}A\nabla u\cdot\nabla\Phi\,dx+\int_{\partial\Omega}\alpha  u\phi\,d\sigma-\int_{\Omega}h\Phi\,dx\right]
+\int_{\partial\Omega}\alpha  u\phi\,d\sigma\\ \nonumber
&\quad=\int_{\partial\Omega}g_D\phi\,d\sigma+\int_{\partial\Omega}\alpha (u_D-u)\phi\,d\sigma.
\end{align*}
Let $w:=u_D-u-v$. Then, for any $\Phi\in W^{1,2}(\Omega)$ with $\Phi|_{\partial\Omega}=\phi$,
$$
\int_{\Omega}A\nabla w\cdot\nabla\Phi\,dx+\int_{\partial\Omega}\alpha  w\phi\,d\sigma=0.
$$
Taking $\Phi:=w$, we then have $u_D-u-v=0$, which implies that \eqref{e4.10} holds.

Finally, by the solvability of both $({\rm D}_{p'})_{L^*}$ and $({\rm R}_p)_L$, \eqref{e4.12}, and \eqref{e4.13}, we conclude that
\begin{align*}
\left\|\widetilde{\mathcal{N}}(\nabla u)\right\|_{L^p(\partial\Omega,\sigma)}
&\lesssim\left\|\widetilde{\mathcal{N}}(\nabla u_D)\right\|_{L^p(\partial\Omega,\sigma)}+
\left\|\widetilde{\mathcal{N}}(\nabla v)\right\|_{L^p(\partial\Omega,\sigma)}\\ \nonumber
&\lesssim\left\|\widetilde{\mathcal{C}}_1(\delta h)\right\|_{L^p(\partial\Omega,\sigma)}+\|g_D\|_{L^{p}(\partial\Omega,\sigma)}
\lesssim\left\|\widetilde{\mathcal{C}}_1(\delta h)\right\|_{L^p(\partial\Omega,\sigma)},
\end{align*}
which implies that \eqref{e4.8} holds. This finishes the proof of Lemma \ref{RtoPRR}.
\end{proof}

Next, we prove a more general result than Theorem \ref{RandPR}(iii).

\begin{lemma}\label{Rselfimprove}
Let $n\ge2$, $\Omega\subset\mathbb{R}^n$ be a bounded one-sided {\rm CAD}, $1<q<p<r<\infty$,
$L:=-\mathrm{div}(A\nabla\cdot)$ be a uniformly elliptic operator, and $\alpha$ be the same as in \eqref{e1.3}.
If $({\rm R}_q)_L$ is solvable and $({\rm LocR}_r)_L$ holds, then  $({\rm R}_{p})_L$ is solvable.
\end{lemma}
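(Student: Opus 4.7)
The plan is to adapt Shen's $L^p$ extrapolation scheme. By Proposition \ref{RobinEquiv}, solvability of $({\rm R}_q)_L$ is equivalent to solvability of $(\widetilde{\mathrm{R}}_q)_L$, so I have simultaneous $L^q$ control of both $\widetilde{\mathcal{N}}(\nabla u)$ and $\widetilde{\mathcal{N}}(u)$ by $\|f\|_{L^q(\partial\Omega,\sigma)}$. Fix $f\in C_{\rm c}(\partial\Omega)$ and let $u$ be the associated weak solution to \eqref{e1.6}; the target is
\[
\left\|\widetilde{\mathcal{N}}(\nabla u)\right\|_{L^{p}(\partial\Omega,\sigma)}\lesssim \|f\|_{L^{p}(\partial\Omega,\sigma)}.
\]

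The key will be a weak reverse H\"older inequality at the boundary. Fix a ball $B:=B(x_0,r_0)$ with $x_0\in\partial\Omega$ and $r_0\in(0,\mathrm{diam\,}(\Omega)/C_0)$, and split the datum as $f=f_1+f_2$, where $f_1:=f\mathbf{1}_{C_0 B\cap\partial\Omega}$. Let $u_j$ denote the weak solution to \eqref{e1.6} with datum $f_j$, so that $u=u_1+u_2$. For $u_1$, the solvability of $(\widetilde{\mathrm{R}}_q)_L$ yields
\[
\left\|\widetilde{\mathcal{N}}(\nabla u_1)\right\|_{L^{q}(\partial\Omega,\sigma)}+\left\|\widetilde{\mathcal{N}}(u_1)\right\|_{L^{q}(\partial\Omega,\sigma)}\lesssim r_0^{(n-1)/q}\left(\fint_{C_0 B\cap\partial\Omega}|f|^q\,d\sigma\right)^{1/q}.
\]
Since $f_2\equiv0$ on $C_0 B\cap\partial\Omega$, the function $u_2$ is a local Robin solution in $C_0 B\cap\Omega$; applying $({\rm LocR}_r)_L$ at the scale of $B$ (after an adjustment of $C_0$) gives
\[
\left\|\widetilde{\mathcal{N}}(|\nabla u_2|\mathbf{1}_{B})\right\|_{L^{r}(\partial\Omega,\sigma)}\lesssim r_0^{(n-1)/r}\left(\fint_{C_0 B\cap\Omega}|\nabla u_2|\,dx+\fint_{C_0 B\cap\Omega}|u_2|\,dx\right).
\]
The interior $L^1$ averages on the right-hand side are dominated, via Fubini in the cones and the $(n-1)$-Ahlfors regularity of $\partial\Omega$, by $\fint_{C_1 B\cap\partial\Omega}[\widetilde{\mathcal{N}}(\nabla u_2)+\widetilde{\mathcal{N}}(u_2)]\,d\sigma$. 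Using $u_2=u-u_1$ together with the $L^q$ estimate above for $u_1$, this in turn is bounded by $\mathcal{M}([\widetilde{\mathcal{N}}(\nabla u)]^q)(x_0)^{1/q}+\mathcal{M}([\widetilde{\mathcal{N}}(u)]^q)(x_0)^{1/q}+\mathcal{M}(|f|^q)(x_0)^{1/q}$.

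Combining the previous estimates with $\widetilde{\mathcal{N}}(\nabla u)\le\widetilde{\mathcal{N}}(\nabla u_1)+\widetilde{\mathcal{N}}(\nabla u_2)$ and absorbing the portion of $\widetilde{\mathcal{N}}(\nabla u_2)$ coming from cone points outside $3B$ (where $\delta\gtrsim r_0$, so interior Caccioppoli and Moser chained through Whitney balls produce pointwise bounds by boundary averages of $u_2$), one arrives at the weak reverse H\"older inequality
\[
\left(\fint_{B\cap\partial\Omega}[\widetilde{\mathcal{N}}(\nabla u)]^{r}\,d\sigma\right)^{1/r}\lesssim \left(\fint_{C_0 B\cap\partial\Omega}\left\{[\widetilde{\mathcal{N}}(\nabla u)]^q+[\widetilde{\mathcal{N}}(u)]^q\right\}\,d\sigma\right)^{1/q}+\left[\mathcal{M}(|f|^{q})(x_0)\right]^{1/q},
\]
valid for every $x_0\in\partial\Omega$. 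Coupled with the $L^q\to L^q$ bound from $(\widetilde{\mathrm{R}}_q)_L$ and the $L^p$-boundedness of $\mathcal{M}$ (Lemma \ref{l2.1}), Shen's $L^p$-extrapolation theorem, a good-$\lambda$ argument implemented in the Neumann analogue in \cite{fl24}, upgrades this to $\|\widetilde{\mathcal{N}}(\nabla u)\|_{L^p(\partial\Omega,\sigma)}\lesssim\|f\|_{L^p(\partial\Omega,\sigma)}$ for every $p\in(q,r)$.

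The main obstacle is the tail of $\widetilde{\mathcal{N}}(\nabla u_2)$ at boundary points close to $B$: the local property $({\rm LocR}_r)_L$ only governs the truncated non-tangential maximal function $\widetilde{\mathcal{N}}(|\nabla u_2|\mathbf{1}_{B})$, whereas the cone over $y\in B\cap\partial\Omega$ a priori reaches the entire $\Omega$. The careful cone decomposition sketched above, together with the Robin-specific appearance of the $|u|$-term in $({\rm LocR}_r)_L$, is what forces us to work through $(\widetilde{\mathrm{R}}_q)_L$ rather than $({\rm R}_q)_L$ alone, and hence to invoke Proposition \ref{RobinEquiv} in an essential way.
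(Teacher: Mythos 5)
Your overall strategy is the same as the paper's: the identical splitting of the datum (solve with $f\mathbf{1}_{C_0B\cap\partial\Omega}$ and treat the remainder as a local Robin solution with vanishing Robin data near $B$), the essential use of Proposition \ref{RobinEquiv} to get simultaneous $L^q$ control of $\widetilde{\mathcal{N}}(u)$ and $\widetilde{\mathcal{N}}(\nabla u)$, the application of $({\rm LocR}_r)_L$ to the remainder, and a Shen-type good-$\lambda$/real-variable upgrade to $L^p$. However, the final display you claim — a weak reverse H\"older inequality with the \emph{full} $\widetilde{\mathcal{N}}(\nabla u)$ raised to the power $r$ on the left-hand side — does not follow and is false in general. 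The piece $u_1$, whose datum $f\mathbf{1}_{C_0B\cap\partial\Omega}$ is merely in $L^q$, is controlled only in $L^q$ by $(\widetilde{\mathrm{R}}_q)_L$; nothing in your hypotheses gives $L^r$ integrability of $\widetilde{\mathcal{N}}(\nabla u_1)$ on $B$ (the local property does not apply to $u_1$, since its boundary datum does not vanish on $C_0B$). Shen's scheme is precisely designed to avoid this: one must keep the decomposition $\widetilde{\mathcal{N}}(\nabla u)\le F_B+R_B$ with the $L^r$ reverse H\"older required only of $R_B:=\widetilde{\mathcal{N}}(\nabla u_2)$ (truncated to $B$) and an $L^q$ bound of $F_B:=\widetilde{\mathcal{N}}(\nabla u_1)$ by $\mathcal{M}(|f|^q)^{1/q}$, or, as the paper does, run the good-$\lambda$ inequality directly on each Vitali ball while retaining the splitting $u=u_0+(u-u_0)$.

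A second, related gap: the term $\fint_{C_0B}[\widetilde{\mathcal{N}}(u)]^q$ on the right of your inequality is not of the form Shen's theorem accepts when the extrapolated function is $F=\widetilde{\mathcal{N}}(\nabla u)$, and it cannot be dominated pointwise by $\mathcal{M}(|f|^q)(x_0)^{1/q}$. The remedy (used in the paper) is to run the argument for the pair, i.e.\ for $\widetilde{\mathcal{N}}(\nabla u)+\widetilde{\mathcal{N}}(u)$ (equivalently, define the good-$\lambda$ level set via the maximum of $\mathcal{M}([\widetilde{\mathcal{N}}(\nabla u)]^q)$ and $\mathcal{M}([\widetilde{\mathcal{N}}(u)]^q)$). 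But then you also need an $L^r$ estimate on $B$ for $\widetilde{\mathcal{N}}(|u_2|\mathbf{1}_B)$, not just for the gradient part; this is exactly where the paper invokes the elementary estimate \eqref{e3.22} (valid for every exponent), a point your sketch never addresses. With these two repairs — keeping the decomposition inside the real-variable argument and closing the loop on the $\widetilde{\mathcal{N}}(u)$ terms via \eqref{e3.22} — your argument becomes the paper's proof.
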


\begin{proof}
By borrowing some ideas from Shen \cite{s07}, we prove the present lemma via using a good-$\lambda$ argument.

Let $f\in C_{\rm c}(\partial\Omega)$ and $u$ be the weak solution to the Robin problem
\begin{equation}\label{e4.14}
\begin{cases}
Lu=0 \ \ & \text{in}\ \ \Omega,\\
\displaystyle\frac{\partial u}{\partial\boldsymbol{\nu}}+\alpha  u=f \ \ & \text{on}\ \ \partial\Omega.
\end{cases}
\end{equation}
It suffices to show
\begin{equation}\label{e4.15}
\left\|\widetilde{\mathcal{N}}(\nabla u)\right\|_{L^{p}(\partial\Omega,\sigma)}\lesssim\|f\|_{L^{p}(\partial\Omega,\sigma)}.
\end{equation}
For any $\lambda\in(0,\infty)$, define
\begin{align*}
E_1(\lambda):=\left\{x\in\partial\Omega:\mathcal{M}\left(\left[\widetilde{\mathcal{N}}(\nabla u)\right]^q\right)(x)>\lambda\right\},\end{align*}
\begin{align*}
E_2(\lambda):=\left\{x\in\partial\Omega:\mathcal{M}\left(
\left[\widetilde{\mathcal{N}}(u)\right]^q\right)(x)>\lambda\right\},\\\nonumber
\end{align*}
and
$$E(\lambda):=\left\{x\in\partial\Omega:\max\left\{\mathcal{M}\left(\left[\widetilde{\mathcal{N}}(\nabla u)\right]^q\right)(x),\mathcal{M}\left(\left[\widetilde{\mathcal{N}}(u)
\right]^q\right)(x)\right\}>\lambda\right\}.
$$
By the equivalence of the solvability of $({\rm R}_q)_L$ and $({\rm \widetilde{R}}_q)_L$ (see Proposition \ref{RobinEquiv})
and the assumption that $({\rm R}_q)_L$ is solvable, we conclude that $\widetilde{\mathcal{N}}(\nabla u),\widetilde{\mathcal{N}}(u)\in L^q(\partial\Omega,\sigma)$,
which, combined with the fact that the Hardy--Littlewood maximal operator $\mathcal{M}$ is weak type $(1,1)$ (see Lemma \ref{l2.1}), implies that, for any $\lambda\in(0,\infty)$,
$\sigma(E(\lambda))<\infty$.

Without loss of generality, for the sake of convenience, we may assume
that $E(\lambda)\neq\partial\Omega$ for any $\lambda\in(0,\infty)$.
We first claim that there exists a positive constants $C$ such that, for any given $\gamma,\eta\in(0,1)$ and $\lambda\in(0,\infty)$,
\begin{equation}\label{e4.16}
\sigma\left(E(\lambda)\cap\left\{x\in\partial\Omega:\mathcal{M}\left(|f|^q\right)(x)
\le\gamma\lambda\right\}\right)\le C\left(\gamma+\eta^{\frac{r}{q}}\right)\sigma(E(\eta\lambda)).
\end{equation}
If \eqref{e4.16} holds, then we have, for any $\lambda\in(0,\infty)$,
$$
\sigma(E(\lambda))\le C\left(\gamma+\eta^{\frac{r}{q}}\right)\sigma(E(\eta\lambda))+\sigma\left(\left\{x\in
\partial\Omega:\mathcal{M}\left(|f|^q\right)(x)>\gamma\lambda\right\}\right),
$$
which, together with the $L^{\frac{p}{q}}$-boundedness of the Hardy--Littlewood maximal operator $\mathcal{M}$ (see Lemma \ref{l2.1}), implies that, for any $\Lambda\in(0,\infty)$,
\begin{align}\label{e4.17}
\int_0^{\Lambda}\sigma(E(\lambda))\lambda^{\frac{p}{q}-1}\,d\lambda
&\le C\left(\gamma+\eta^{\frac{r}{q}}\right)\int_0^{\Lambda}\sigma(E(\eta\lambda))
\lambda^{\frac{p}{q}-1}\,d\lambda\nonumber\\
&\quad+\int_0^{\Lambda}\sigma\left(\left\{x\in\partial\Omega:\mathcal{M}\left(|f|^q\right)(x)>
\gamma\lambda\right\}\right)\lambda^{\frac{p}{q}-1}\,d\lambda\nonumber\\
&\le C\left(\gamma+\eta^{\frac{r}{q}}\right)\eta^{-\frac{p}{q}}
\int_0^{\eta\Lambda}\sigma(E(\lambda))\lambda^{\frac{p}{q}-1}\,d\lambda
+C_{(\gamma)}\int_{\partial\Omega}|f|^p\,d\sigma.
\end{align}
Moreover, from the fact that
$$
\int_0^{\Lambda}\sigma(E(\lambda))\lambda^{\frac{p}{q}-1}\,d\lambda\le
\Lambda^{\frac{p}{q}-1}\sigma(\partial\Omega)<\infty,
$$
we deduce that both the left-hand and the right-hand sides of inequality \eqref{e4.17} are finite. Notice that $r>p>q$.
Take $\eta\in(0,1)$ small enough in \eqref{e4.17} such that $C\eta^{r/q-p/q}\le1/4$. Then, for such an $\eta$,
choose $\gamma\in(0,1)$ small enough in \eqref{e4.17} such that $C\gamma\eta^{-p/q}\le1/4$. For such $\eta$ and $\gamma$, we have
$$C\left(\gamma+\eta^{\frac{r}{q}}\right)\eta^{-\frac{p}{q}}\le\frac{1}{2}.$$
By this and \eqref{e4.17}, we find that
$$
\int_0^{\Lambda}\sigma(E(\lambda))\lambda^{\frac{p}{q}-1}\,d\lambda\le C_{(\gamma)}\int_{\partial\Omega}|f|^p\,d\sigma.
$$
Letting $\Lambda\rightarrow\infty$ in the above inequality, we conclude that
\begin{align}\label{e4.18}
\int_0^{\infty}\sigma(E(\lambda))\lambda^{\frac{p}{q}-1}\,d\lambda\le C_{(\gamma)}\int_{\partial\Omega}|f|^p\,d\sigma,
\end{align}
which, combined with the fact that $E_1(\lambda)\subset E(\lambda)$, further implies that
\begin{align*}
\left\|\widetilde{\mathcal{N}}(\nabla u)\right\|_{L^{p}(\partial\Omega,\sigma)}^p
&\le\int_{\partial\Omega}\left\{\mathcal{M}\left(\left[\widetilde{\mathcal{N}}(\nabla u)\right]^q\right)\right\}^{\frac{p}{q}}\,d\sigma
=\frac{p}{q}\int_0^{\infty}\sigma(E_1(\lambda))\lambda^{\frac{p}{q}-1}\,d\lambda\\ \nonumber
&\lesssim\frac{p}{q}\int_0^{\infty}\sigma(E(\lambda))\lambda^{\frac{p}{q}-1}\,d\lambda
\lesssim\left\|f\right\|_{L^{p}(\partial\Omega,\sigma)}^p.
\end{align*}
Thus, \eqref{e4.15} holds and hence $({\rm R}_p)_L$ is solvable.

Now, we prove \eqref{e4.16}. For any $x\in E(\eta\lambda)$, let $$\Delta(x,r_x):=B(x,r_x)\cap\Omega$$ with
$r_x:=\frac{1}{50}\mathrm{dist\,}(x,\partial\Omega\setminus E(\eta\lambda))$.
Then the union of boundary balls $\{\Delta(x,r_x)\}_{x\in E(\eta\lambda)}$ covers $E(\eta\lambda)$.
By Vitali's covering lemma (see, for example, \cite{s93}), we find that there exist an index set $I$ and a
subcollection $\{\Delta_k:=\Delta(x_k,r_{x_k})\}_{k\in I}$
such that $\{\frac{1}{5}\Delta_k\}_{k\in I}$ is non-overlapping and $E(\eta\lambda)\subset\bigcup_{k\in I}\Delta_k$.
Thus, to prove \eqref{e4.16}, it suffices to show that, for any $\Delta_k$, with $k\in I$, satisfying that there exists $z\in\Delta_k$ such that
\begin{equation}\label{e4.19}
\left\{x\in\Delta_k:\mathcal{M}\left(|f|^q\right)(x)\le\gamma\lambda\right\}\supset\{z\}\neq\emptyset,
\end{equation}
\begin{equation}\label{e4.20}
\sigma\left(E(\lambda)\cap\Delta_k\right)\lesssim\left(\gamma+\eta^{\frac{r}{q}}\right)\sigma(\Delta_k).
\end{equation}

From the definition of $E(\lambda)$, we infer that $E(\lambda)\subset E(\eta\lambda)$ and, for any $k\in I$, there exists $y\in100\Delta_k$ such that
\begin{equation}\label{e4.21}
\mathcal{M}\left(\left[\widetilde{\mathcal{N}}(\nabla u)\right]^q\right)(y)\le\eta\lambda\ \ \text{and}\ \ \mathcal{M}\left(\left[\widetilde{\mathcal{N}}(u)\right]^q\right)(y)\le\eta\lambda.
\end{equation}
Moreover, for any $x\in\Delta_k\cap E(\lambda)$ with $k\in I$,
\begin{align*}
&\lambda<\mathcal{M}\left(\left[\widetilde{\mathcal{N}}(\nabla u)\right]^q\right)(x)=\mathcal{M}\left(\left[\widetilde{\mathcal{N}}(\nabla u\mathbf{1}_{2B_k})\right]^q\right)(x)
\end{align*}
and
\begin{align*}
&\lambda<\mathcal{M}\left(\left[\widetilde{\mathcal{N}}(u)\right]^q\right)(x)
=\mathcal{M}\left(\left[\widetilde{\mathcal{N}}
(u\mathbf{1}_{2B_k})\right]^q\right)(x),
\end{align*}
where $B_k:=B(x_k,r_{x_k})$. Fix $k\in I$. Let $u_0$ be a weak solution to the Robin problem \eqref{e4.14} with $f$ replaced by $f\mathbf{1}_{4\Delta_k}$.
Then we have
\begin{align}\label{e4.22}
\sigma(E(\lambda)\cap\Delta_k)
&\le\sigma\left(\left\{x\in\Delta_k:\mathcal{M}\left(\left[\widetilde{\mathcal{N}}(\nabla u)\right]^q\right)(x)>\lambda\right\}\right)\nonumber\\ \notag
&\quad+\sigma\left(\left\{x\in\Delta_k:\mathcal{M}\left(\left[\widetilde{\mathcal{N}}
(u)\right]^q\right)(x)>\lambda\right\}\right)\\ \nonumber
&=\sigma\left(\left\{x\in\Delta_k:\mathcal{M}\left(\left[\widetilde{\mathcal{N}}(\nabla u\mathbf{1}_{2B_k})\right]^q\right)(x)>\lambda\right\}\right)\\ \notag
&\quad+\sigma\left(\left\{x\in\Delta_k:\mathcal{M}\left(\left[\widetilde{\mathcal{N}}
(u\mathbf{1}_{2B_k})\right]^q\right)(x)>\lambda\right\}\right)\\ \nonumber
&\le\sigma\left(\left\{x\in\Delta_k:\mathcal{M}\left(\left[\widetilde{\mathcal{N}}(\nabla[u-u_0]
\mathbf{1}_{2B_k})\right]^q\right)(x)>\lambda\right\}\right)\\ \notag
&\quad+\sigma\left(\left\{x\in\Delta_k:\mathcal{M}\left(\left[\widetilde{\mathcal{N}}(\nabla u_0\mathbf{1}_{2B_k})\right]^q\right)(x)>\lambda\right\}\right)\\ \nonumber
&\quad+\sigma\left(\left\{x\in\Delta_k:\mathcal{M}\left(\left[\widetilde{\mathcal{N}}([u-u_0]
\mathbf{1}_{2B_k})\right]^q\right)(x)>\lambda\right\}\right)\\ \notag
&\quad+\sigma\left(\left\{x\in\Delta_k:\mathcal{M}\left(\left[
\widetilde{\mathcal{N}}(u_0\mathbf{1}_{2B_k})\right]^q\right)
(x)>\lambda\right\}\right)\\
&=:\mathrm{I}_1+\mathrm{I}_2+\mathrm{I}_3+\mathrm{I}_4.
\end{align}

For the term $\mathrm{I}_2$, by the fact that the Hardy--Littlewood
maximal operator $\mathcal{M}$ is of weak type $(1,1)$ (see Lemma \ref{l2.1}),
the solvability of $({\rm R}_q)_L$, and \eqref{e4.19}, we conclude that
\begin{align}\label{e4.23}
\mathrm{I}_2
&\lesssim\frac{1}{\lambda}\int_{\partial\Omega}\left[\widetilde{\mathcal{N}}(\nabla u_0\mathbf{1}_{2B_k})\right]^q\,d\sigma
\lesssim\frac{1}{\lambda}\left\|f\mathbf{1}_{4\Delta_k}\right\|_{L^q(\partial\Omega,\sigma)}^q
\nonumber\\
&\lesssim\frac{1}{\lambda}\int_{4\Delta_k}|f|^q\,d\sigma
\lesssim\frac{\sigma(\Delta_k)}{\lambda}\mathcal{M}\left(|f|^q\right)(z)
\lesssim\gamma\sigma(\Delta_k).
\end{align}
For $\mathrm{I}_1$, from the fact that $u-u_0$ is a weak solution to $Lw=0$ in $4B_k\cap\Omega$ with the Robin boundary condition $\frac{\partial w}{\partial\boldsymbol{\nu}}+\alpha  w=0$ on $4\Delta_k$, the assumption that
$({\rm LocR}_r)_L$ holds with $r\in(p,\infty)$,
 and the $L^{\frac{r}{q}}$-boundedness of $\mathcal{M}$ (see Lemma \ref{l2.1}), we deduce that
\begin{align}\label{e4.24}
\mathrm{I}_1
&\lesssim\lambda^{-\frac{r}{q}}\int_{\partial\Omega}\left[\widetilde{\mathcal{N}}(\nabla[u-u_0]
\mathbf{1}_{2B_k})\right]^r\,d\sigma\nonumber\\
&\lesssim\lambda^{-\frac{r}{q}}\sigma(\Delta_k)\left[\fint_{4B_k\cap\Omega}
|\nabla(u-u_0)|\,dx+\fint_{4B_k\cap\Omega}|u-u_0|\,dx\right]^{r}.
\end{align}
Moreover, similar to the estimation of \eqref{e3.26}, we find that
\begin{align*}
\fint_{4B_k\cap\Omega}|\nabla(u-u_0)|\,dx
&\lesssim\fint_{20B_k\cap\partial\Omega}\widetilde{\mathcal{N}}\left(\nabla[u-u_0]
\mathbf{1}_{4B_k}\right)\,d\sigma\\\nonumber
&\lesssim \fint_{20\Delta_k}\widetilde{\mathcal{N}}(\nabla u)\,d\sigma+\fint_{20\Delta_k}\widetilde{\mathcal{N}}(\nabla u_0)\,d\sigma.
\end{align*}
From this, H\"older's inequality, \eqref{e4.21}, the solvability of $({\rm R}_q)_L$, and \eqref{e4.19}, it follows that
\begin{align}\label{e4.25}
\fint_{4B_k\cap\Omega}|\nabla(u-u_0)|\,dx
&\lesssim\left\{\fint_{20\Delta_k}\left[\widetilde{\mathcal{N}}(\nabla u)\right]^q\,d\sigma\right\}^{\frac{1}{q}}
+\left\{\fint_{20\Delta_k}\left[\widetilde{\mathcal{N}}(\nabla u_0)\right]^q\,d\sigma\right\}^{\frac{1}{q}}\nonumber\\\nonumber
&\lesssim\left\{\mathcal{M}\left(\left[\widetilde{\mathcal{N}}(\nabla u)\right]^q\right)(y)\right\}^{\frac{1}{q}}+\sigma(\Delta_k)
^{-\frac{1}{q}}\left\|f\mathbf{1}_{4\Delta_k}\right\|_{L^q(\partial\Omega,\sigma)}\\\nonumber
&\lesssim(\eta\lambda)^{\frac{1}{q}}+\left(\fint_{4\Delta_k}|f|^q\,d\sigma\right)^{\frac{1}{q}}\\
&\lesssim(\eta\lambda)^{\frac{1}{q}}+\left[\mathcal{M}\left(|f|^q\right)(z)\right]^{\frac{1}{q}}
\lesssim\lambda^{\frac{1}{q}}\left(\eta^{\frac{1}{q}}+\gamma^{\frac{1}{q}}\right).
\end{align}
Similarly, by H\"older's inequality, \eqref{e4.21}, the equivalence of the solvability of $({\rm R}_q)_L$ and
$({\rm \widetilde{R}}_q)_L$ (see Proposition \ref{RobinEquiv}), and \eqref{e4.19}, we deduce that
$$
\fint_{4B_k\cap\Omega}|u-u_0|\,dx
\lesssim\lambda^{\frac{1}{q}}\left(\eta^{\frac{1}{q}}+\gamma^{\frac{1}{q}}\right),
$$
which, together with \eqref{e4.24} and \eqref{e4.25}, further implies that
\begin{equation}\label{e4.26}
\mathrm{I}_1\lesssim\left(\eta^{\frac{r}{q}}+\gamma^{\frac{r}{q}}\right)\sigma(\Delta_k)
\lesssim\left(\eta^{\frac{r}{q}}+\gamma\right)\sigma(\Delta_k).
\end{equation}
For the terms $\mathrm{I}_3$ and $\mathrm{I}_4$, using the equivalence of the solvability of $({\rm R}_q)_L$ and
$({\rm \widetilde{R}}_q)_L$ and the fact that \eqref{e3.22} holds for any $r\in(p,\infty)$, similar to
the estimation of \eqref{e4.23} and \eqref{e4.26}, we conclude that
$$
\mathrm{I}_3,\mathrm{I}_4\lesssim\left(\eta^{\frac{r}{q}}+\gamma\right)
\sigma(\Delta_k),
$$
which, combined with \eqref{e4.23}, \eqref{e4.26}, and \eqref{e4.22}, further implies that \eqref{e4.20} and hence \eqref{e4.16} hold.
This finishes the proof of Lemma \ref{Rselfimprove}.
\end{proof}

Finally, we employ the same approach as that used in the proof of Lemma \ref{Rselfimprove} to prove the extrapolation
property of the $L^p$ Poisson--Robin problem.

\begin{lemma}\label{PRselfimprove}
Let $n\ge2$, $\Omega\subset\mathbb{R}^n$ be a bounded one-sided ${\rm CAD}$, $L:=-\mathrm{div\,}(A\nabla\cdot)$ be a
uniformly elliptic operator, $\alpha$ be the same as in \eqref{e1.3}, and $q\in(1,\infty)$.
If $({\rm PR}_q)_L$ is solvable, then, for any $p\in[q,\infty)$, $({\rm PR}_{p})_{L}$ is solvable.
\end{lemma}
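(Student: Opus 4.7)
The plan is to repeat the good-$\lambda$ argument of Lemma \ref{Rselfimprove}, now tracking only the non-tangential maximal function $\widetilde{\mathcal{N}}(u)$ (rather than $\widetilde{\mathcal{N}}(\nabla u)$), and with the decisive simplification that the ``free'' local estimate \eqref{e3.22}, which holds for every exponent in $(1,\infty)$ without any hypothesis on $L$ or $\Omega$, plays the role that $({\rm LocR}_r)_L$ played in the Robin case. In particular, no analogue of the $({\rm LocR})$ hypothesis is needed here, which is why one obtains the full range $p \in [q,\infty)$ rather than just a small opening $[p,p+\varepsilon)$.

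Fix $p \in (q,\infty)$ (the case $p=q$ is the hypothesis), $\boldsymbol{F} \in L_{\rm c}^\infty(\Omega;\mathbb{R}^n)$, and let $u$ denote the weak solution to \eqref{e1.7}. For $\lambda \in (0,\infty)$, set
$$
E(\lambda) := \left\{x \in \partial\Omega : \mathcal{M}\left(\left[\widetilde{\mathcal{N}}(u)\right]^q\right)(x) > \lambda\right\}.
$$
The solvability of $({\rm PR}_q)_L$ together with Lemma \ref{l2.1} ensures $\sigma(E(\lambda)) < \infty$. The target is the good-$\lambda$ inequality: for any $\gamma, \eta \in (0,1)$,
$$
\sigma\left(E(\lambda) \cap \left\{\mathcal{M}\left(\left[\widetilde{\mathcal{C}}_1(\delta \boldsymbol{F})\right]^q\right) \le \gamma\lambda\right\}\right) \le C\left(\gamma + \eta^{p/q}\right)\sigma(E(\eta\lambda)).
$$
From this, the bootstrapping computation of \eqref{e4.17}--\eqref{e4.18}, combined with the $L^{p/q}$-boundedness of $\mathcal{M}$, yields $\|\widetilde{\mathcal{N}}(u)\|_{L^p(\partial\Omega,\sigma)} \lesssim \|\widetilde{\mathcal{C}}_1(\delta \boldsymbol{F})\|_{L^p(\partial\Omega,\sigma)}$, which is the solvability of $({\rm PR}_p)_L$.

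To prove the good-$\lambda$ inequality, I would Vitali-cover $E(\eta\lambda)$ by boundary balls $\{\Delta_k = \Delta(x_k,r_{x_k})\}_{k \in I}$ as in the proof of Lemma \ref{Rselfimprove}, and fix one $\Delta_k$ meeting both a point $z$ with $\mathcal{M}([\widetilde{\mathcal{C}}_1(\delta\boldsymbol{F})]^q)(z) \le \gamma\lambda$ and a comparison point $y \in 100\Delta_k$ with $\mathcal{M}([\widetilde{\mathcal{N}}(u)]^q)(y) \le \eta\lambda$. Let $u_0$ be the weak solution to \eqref{e1.7} with $\boldsymbol{F}$ replaced by $\boldsymbol{F}\mathbf{1}_{2B_k}$, where $B_k := B(x_k,r_{x_k})$, and set $w := u - u_0$. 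Then $w$ is a local weak solution to $Lw = 0$ in $2B_k \cap \Omega$ with the homogeneous Robin condition $\partial w/\partial\boldsymbol{\nu} + \alpha w = 0$ on $2B_k \cap \partial\Omega$. The split then reduces the problem to controlling
$$
\mathrm{I}_1 := \sigma\left(\mathcal{M}\left(\left[\widetilde{\mathcal{N}}(u_0 \mathbf{1}_{2B_k})\right]^q\right) > \lambda/C\right),\quad \mathrm{I}_2 := \sigma\left(\mathcal{M}\left(\left[\widetilde{\mathcal{N}}(w \mathbf{1}_{2B_k})\right]^q\right) > \lambda/C\right).
$$
The term $\mathrm{I}_1$ is handled exactly as in \eqref{e4.23}: the weak $(1,1)$ bound for $\mathcal{M}$, the hypothesis $({\rm PR}_q)_L$, and the pointwise bound at $z$ give $\mathrm{I}_1 \lesssim \gamma\sigma(\Delta_k)$. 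For $\mathrm{I}_2$, apply the unconditional estimate \eqref{e3.22} at exponent $p$ to $w$, obtaining $\|\widetilde{\mathcal{N}}(w\mathbf{1}_{B_k})\|_{L^p(\partial\Omega,\sigma)} \lesssim r_{x_k}^{(n-1)/p}\fint_{2B_k\cap\Omega}|w|\,dx$; then split $|w| \le |u| + |u_0|$, control $\fint|u|$ by $(\eta\lambda)^{1/q}$ via the averaging argument of \eqref{e3.26} combined with the pointwise bound at $y$, and control $\fint|u_0|$ by $(\gamma\lambda)^{1/q}$ via $({\rm PR}_q)_L$ and the pointwise bound at $z$. Chebyshev then produces $\mathrm{I}_2 \lesssim (\eta + \gamma)^{p/q}\sigma(\Delta_k) \lesssim (\eta^{p/q} + \gamma)\sigma(\Delta_k)$.

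The main technical point to verify — already present in the argument for Lemma \ref{Rselfimprove} — is the cone-geometry identification $\widetilde{\mathcal{N}}(u)(x) = \widetilde{\mathcal{N}}(u\mathbf{1}_{2B_k})(x)$ for $x \in \Delta_k$, which allows one to localize the non-tangential maximal function to the ball where the equation has been truncated; this follows from choosing $r_{x_k}$ proportional to the distance from $x_k$ to $\partial\Omega \setminus E(\eta\lambda)$ and using the smallness of the cone aperture. Everything else is a direct transcription of the Robin-case argument with $\nabla u$ replaced by $u$ and $({\rm LocR}_r)_L$ replaced by the free bound \eqref{e3.22}.
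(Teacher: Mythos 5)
Your overall strategy is exactly the paper's: run the good-$\lambda$ argument of Lemma \ref{Rselfimprove} on $\widetilde{\mathcal{N}}(u)$, with the unconditional local bound \eqref{e3.22} (valid for every exponent, since it only needs the Moser estimate for local solutions with homogeneous Robin data) taking over the role of $({\rm LocR}_r)_L$, and with the data term handled through $({\rm PR}_q)_L$ plus the localization of $\widetilde{\mathcal{C}}_1(\delta\boldsymbol{F}\mathbf{1}_{2B_k})$. However, there is one concrete flaw: you apply \eqref{e3.22} at the exponent $p$ itself, which yields the good-$\lambda$ inequality with factor $C\bigl(\gamma+\eta^{p/q}\bigr)$. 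That factor is too weak for the bootstrapping step \eqref{e4.17}--\eqref{e4.18} that you invoke: after rescaling $\lambda\mapsto\eta\lambda$ the absorption requires $C\bigl(\gamma+\eta^{r/q}\bigr)\eta^{-p/q}\le\tfrac12$, and with $r=p$ the second contribution is $C\eta^{p/q}\eta^{-p/q}=C$, a fixed constant that cannot be made small by any choice of $\eta$ or $\gamma$. In other words, the exponent of $\eta$ in the good-$\lambda$ inequality must strictly exceed $p/q$ for the hidden-constant absorption to close, which is precisely why the paper fixes an auxiliary $r\in(p,\infty)$ and proves \eqref{e4.28} with $\eta^{r/q}$.

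The repair is immediate and stays within your own argument: since \eqref{e3.22} holds for every exponent in $(1,\infty)$ with no hypothesis on $L$ or $\Omega$, apply it (and the subsequent Chebyshev estimate for the term $\mathrm{I}_2$) at some $r\in(p,\infty)$ rather than at $p$; this gives $\mathrm{I}_2\lesssim\bigl(\eta^{r/q}+\gamma\bigr)\sigma(\Delta_k)$, hence the good-$\lambda$ inequality with factor $C\bigl(\gamma+\eta^{r/q}\bigr)$, and then one first chooses $\eta$ with $C\eta^{(r-p)/q}\le\tfrac14$ and afterwards $\gamma$ with $C\gamma\eta^{-p/q}\le\tfrac14$, exactly as in \eqref{e4.17}--\eqref{e4.18}. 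With that single change of exponent your proof coincides with the paper's. One further point you should spell out when bounding $\mathrm{I}_1$ and the average of $|u_0|$: the passage from $\bigl\|\widetilde{\mathcal{C}}_1(\delta\boldsymbol{F}\mathbf{1}_{2B_k})\bigr\|_{L^q(\partial\Omega,\sigma)}^q$ to $\sigma(\Delta_k)\,\mathcal{M}\bigl(\bigl[\widetilde{\mathcal{C}}_1(\delta\boldsymbol{F})\bigr]^q\bigr)(z)$ needs the localization estimate for the Carleson functional (the analogue of the bound quoted from \cite[p.\,35]{fl24} in the proof of Lemma \ref{LocwPR}), since $\widetilde{\mathcal{C}}_1(\delta\boldsymbol{F}\mathbf{1}_{2B_k})$ is not supported in a fixed dilate of $\Delta_k$, unlike $f\mathbf{1}_{4\Delta_k}$ in \eqref{e4.23}.
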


\begin{proof}
Let $\boldsymbol{F}\in L_{\rm c}^{\infty}(\Omega;\mathbb{R}^n)$ and $u$ be the weak solution to the Robin problem
\begin{equation*}
\begin{cases}
Lu=-\mathrm{div\,}\boldsymbol{F}\ \ & \text{in}\ \ \Omega,\\
\displaystyle\frac{\partial u}{\partial\boldsymbol{\nu}}+\alpha  u=\boldsymbol{F}\cdot\boldsymbol{\nu} \ \ & \text{on}\ \ \partial\Omega.
\end{cases}
\end{equation*}
To prove the present lemma, it suffices to show that, for any $p\in(q,\infty)$,
\begin{equation}\label{e4.27}
\left\|\widetilde{\mathcal{N}}(u)\right\|_{L^{p}(\partial\Omega,\sigma)}
\lesssim\left\|\widetilde{\mathcal{C}}_1(\delta\boldsymbol{F})\right\|_{L^{p}(\partial\Omega,\sigma)}.
\end{equation}
For any $\lambda\in(0,\infty)$, define
$$
E(\lambda):=\left\{x\in\partial\Omega:\mathcal{M}\left(\left[
\widetilde{\mathcal{N}}(u)\right]^q\right)(x)>\lambda\right\}.
$$
By the assumption that $({\rm PR}_q)_L$ is solvable, we conclude that
$\widetilde{\mathcal{N}}(u)\in L^q(\partial\Omega,\sigma)$, which, together with
the fact that the Hardy--Littlewood maximal operator $\mathcal{M}$ is of weak type $(1,1)$ (see Lemma \ref{l2.1}),
implies that, for any $\lambda\in(0,\infty)$, $\sigma(E(\lambda))<\infty.$ Without loss of generality, for the sake of convenience,
we may assume that $E(\lambda)\neq\partial\Omega$ for any $\lambda\in(0,\infty)$.

Let $p\in(q,\infty)$ and $r\in(p,\infty)$. Similar to the proof of the good-$\lambda$ inequality \eqref{e4.16},
we find that there exists a positive constant
$C$ such that, for any given $\eta,\gamma\in(0,1)$ small enough and for any $\lambda\in(0,\infty)$,
\begin{equation}\label{e4.28}
\sigma\left(E(\lambda)\cap\left\{x\in\partial\Omega:\mathcal{M}
\left(\left[\widetilde{\mathcal{C}}_1(\delta\boldsymbol{F})
\right]^q\right)(x)\le\gamma\lambda\right\}\right)\le C\left(\gamma+\eta^{\frac{r}{q}}\right)\sigma(E(\eta\lambda)).
\end{equation}
Then, from \eqref{e4.28}, we deduce that, for any $\lambda\in(0,\infty)$,
$$
\sigma(E(\lambda))\le C\left(\gamma+\eta^{\frac{r}{q}}\right)\sigma(E(\eta\lambda))+\sigma\left(\left\{x\in\partial\Omega:
\mathcal{M}\left(\left[\widetilde{\mathcal{C}}_1(\delta
\boldsymbol{F})\right]^q\right)(x)>\gamma\lambda\right\}\right),
$$
which, combined with the $L^{\frac{p}{q}}$-boundedness of the Hardy--Littlewood maximal operator $\mathcal{M}$
(see Lemma \ref{l2.1}), implies that, for any given $\Lambda\in(0,\infty)$,
\begin{align}\label{e4.29}
\int_0^{\Lambda}\sigma(E(\lambda))\lambda^{\frac{p}{q}-1}\,d\lambda
&\le C\left(\gamma+\eta^{\frac{p}{q}}\right)\int_0^{\Lambda}\sigma(E(\eta\lambda))
\lambda^{\frac{p}{q}-1}\,d\lambda\nonumber\\\nonumber
&\quad+\int_0^{\Lambda}\sigma\left(\left\{x\in\partial\Omega:
\mathcal{M}\left(\left[\widetilde{\mathcal{C}}_1(\delta\boldsymbol{F})
\right]^q\right)(x)>\gamma\lambda\right\}\right)
\lambda^{\frac{p}{q}-1}\,d\lambda\\
&\le C\left(\gamma+\eta^{\frac{r}{q}}\right)\eta^{-\frac{p}{q}}\int_0^{\eta\Lambda}\sigma(E(\lambda))
\lambda^{\frac{p}{q}-1}\,d\lambda
+C_{(\gamma)}\int_{\partial\Omega}\left[\widetilde{\mathcal{C}}_1(\delta\boldsymbol{F})\right]^p\,d\sigma.
\end{align}
Using \eqref{e4.29}, similar to the proof of \eqref{e4.18},
we conclude that
$$
\int_0^{\infty}\sigma(E(\lambda))\lambda^{\frac{p}{q}-1}\,d\lambda\le C_{(\gamma)}\int_{\partial\Omega}
\left[\widetilde{\mathcal{C}}_1(\delta\boldsymbol{F})\right]^p\,d\sigma,
$$
which further implies that
\begin{align*}
\left\|\widetilde{\mathcal{N}}(u)\right\|_{L^{p}(\partial\Omega,\sigma)}^p
&\le\int_{\partial\Omega}\left\{\mathcal{M}\left(\left[\widetilde{\mathcal{N}}
(u)\right]^q\right)\right\}^{\frac{p}{q}}\,d\sigma
=\frac{p}{q}\int_0^{\infty}\sigma(E(\lambda))\lambda^{\frac{p}{q}-1}\,d\lambda\\
&\lesssim\left\|\widetilde{\mathcal{C}}_1(\delta\boldsymbol{F})\right\|_{L^{p}(\partial\Omega,\sigma)}^p.
\end{align*}
Thus, \eqref{e4.27} holds and hence $({\rm PR}_p)_L$ is solvable.
This finishes the proof of Lemma \ref{PRselfimprove}.
\end{proof}

Using Lemmas \ref{PRRtoR}, \ref{RtoPRR}, \ref{Rselfimprove}, and \ref{PRselfimprove}, we finish the proof of Theorem \ref{RandPR}.

\begin{proof}[Proof of Theorem \ref{RandPR}]
The conclusions of (i) through (iv) of the present theorem follow from Lemmas \ref{PRRtoR}, \ref{RtoPRR},
\ref{Rselfimprove}, and \ref{PRselfimprove}, respectively.
\end{proof}

\section{Applications to Laplace operators on Lipschitz domains\label{s5}}
In this section, we give an application of Theorems \ref{PRandPRR} and \ref{RandPR} to the Poisson--Robin(-regularity)
problem for the Laplace operator on bounded Lipschitz domains.

Let $n\ge3$, $\Omega\subset\mathbb{R}^n$ be a bounded Lipschitz domain, $p\in(1,\infty)$,
and $\alpha$ be the same as in \eqref{e1.3}. In this case, we denote the $L^p$ Dirichlet and the $L^p$ Robin problems of
the Laplace equation $\Delta u=0$ in $\Omega$ simply by $({\rm D}_p)_{\Delta}$ and $({\rm R}_p)_{\Delta}$.
The solvability of the Dirichlet problem $({\rm D}_p)_{\Delta}$ and the Robin problem
$({\rm R}_p)_{\Delta}$ is well known (see, for example, \cite{d79,dk87,k94,ls05,v84}).
\begin{proposition}\label{DNR}
Let $n\ge3$, $\Omega\subset\mathbb{R}^n$ be a bounded Lipschitz domain, and $\alpha$ be the same as in \eqref{e1.3}.
Then the following assertions hold.
\begin{enumerate}[{\rm (i)}]

\item There exists a positive constant $\varepsilon\in(0,1]$, depending only on the Lipschitz constant of $\Omega$ and $n$,
such that, for any $p\in(2-\varepsilon,\infty)$, the Dirichlet problem $({\rm D}_p)_{\Delta}$ is solvable.

\item There exists a positive constant $\varepsilon\in(0,\infty)$, depending only on the Lipschitz constant of $\Omega$ and $n$,
such that, for any $p\in(1,2+\varepsilon)$, the Robin problem $({\rm R}_p)_{\Delta}$ is solvable.
\end{enumerate}
\end{proposition}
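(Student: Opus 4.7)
The plan is to derive Proposition \ref{DNR} from classical results in the literature rather than giving a self-contained proof. For assertion (i), the starting point is Dahlberg's theorem \cite{d77,d79}, which asserts that, on a bounded Lipschitz domain, the harmonic measure belongs to the $A_\infty$ class with respect to the surface measure; equivalently, $({\rm D}_2)_\Delta$ is solvable. The reverse H\"older self-improvement property of $A_\infty$ weights then produces some $\varepsilon\in(0,1]$, depending only on $n$ and the Lipschitz character of $\Omega$, for which $({\rm D}_{2-\varepsilon})_\Delta$ is solvable. An interpolation-type argument, together with the standard maximum principle bound, upgrades this to $({\rm D}_p)_\Delta$ for every $p\in(2-\varepsilon,\infty)$; complete expositions are given in \cite{v84,k94}.

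For assertion (ii), I would invoke Lanzani and Shen \cite{ls05}, who proved precisely the conclusion desired: the Robin problem $({\rm R}_p)_\Delta$ is solvable on bounded Lipschitz domains for all $p$ in a range of the form $(1,2+\varepsilon)$, with $\varepsilon>0$ depending only on $n$ and the Lipschitz constant of $\Omega$. Their argument proceeds in three stages: first, a Rellich-type identity adapted to the Robin boundary condition controls $\|\widetilde{\mathcal N}(\nabla u)\|_{L^2(\partial\Omega,\sigma)}$ by $\|f\|_{L^2(\partial\Omega,\sigma)}$, which together with a layer-potential inversion yields $({\rm R}_2)_\Delta$; second, a small-exponent perturbation opens a neighborhood of $p=2$; third, a good-$\lambda$ real-variable extrapolation in the spirit of Shen \cite{s07} extends the range to $(1,2+\varepsilon)$.

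An alternative route, internal to this paper, is attractive and would serve as a sanity check: the $L^2$ solvability of the Robin problem is essentially the Lax--Milgram estimate recorded in Remark \ref{r1.1}, while Dahlberg's theorem gives $({\rm D}_{p'})_{\Delta}$ for all $p$ near $2$. One can therefore combine Theorem \ref{RandPR}(iii) and Corollary \ref{c1.1} (applied with $L=L^\ast=-\Delta$) to obtain the self-improvement of $({\rm R}_2)_\Delta$ up to $({\rm R}_{2+\varepsilon})_\Delta$ and, via the extrapolation half of Corollary \ref{c1.1}, down to $({\rm R}_q)_\Delta$ for all $q\in(1,2+\varepsilon)$. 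This bootstrapping route has the advantage of being self-contained relative to the main theorems of the present article.

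The only substantive obstacle is to check that the coefficient hypothesis \eqref{e1.3}, which merely requires $0\le \alpha\in L^{n-1+\varepsilon_0}(\partial\Omega,\sigma)$ with $\alpha\ge c_0$ on a set $E_0$ of positive surface measure, fits within the framework of the cited work. Since \cite{ls05} treats essentially this generality, and since the extension \cite{yyy18} handles genuinely $L^q$-type Robin coefficients, no new analytic input is needed; the verification amounts to matching assumptions and checking that the constants $\varepsilon_0, c_0$ and the Lipschitz constant enter the final $\varepsilon_1, \varepsilon_2$ only through harmless dependencies. This bookkeeping, rather than any estimate, is the main point where care is required.
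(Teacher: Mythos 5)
Your main route coincides with the paper's own treatment: Proposition \ref{DNR} is recorded there as a known result, with (i) resting on Dahlberg's theorem and its self-improvement plus the maximum-principle upgrade to large $p$ (see \cite{d79,v84,k94}) and (ii) on Lanzani and Shen \cite{ls05} (with \cite{yyy18} covering the generality of $\alpha$ in \eqref{e1.3}), which is exactly what you propose. One caution about your optional ``alternative route'': the Lax--Milgram bound of Remark \ref{r1.1} gives only an energy estimate in $W^{1,2}(\Omega)$ and not the boundary estimate $\bigl\|\widetilde{\mathcal{N}}(\nabla u)\bigr\|_{L^{2}(\partial\Omega,\sigma)}\lesssim\|f\|_{L^{2}(\partial\Omega,\sigma)}$ that Definition \ref{defR} requires, so $({\rm R}_2)_{\Delta}$ cannot be obtained that way (a Rellich-type identity as in \cite{ls05} is genuinely needed); moreover, Theorem \ref{RandPR}(iii) and Corollary \ref{c1.1} presuppose the local property $({\rm LocR}_p)_{\Delta}$ and extra scale-invariant $L^p$ hypotheses on $\alpha$ beyond \eqref{e1.3}, so that internal bootstrap is not self-contained and should not replace the citation-based argument.
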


As applications of  Theorems \ref{PRandPRR} and \ref{RandPR} and Proposition \ref{DNR}, we now consider the solvability of
the $L^p$ Poisson--Robin problem and the $L^p$ Poisson--Robin-regularity problem when $\Omega\subset\mathbb{R}^n$ is a bounded Lipschitz
domain and $L:=\Delta$. In this case, we denote the $L^p$ Poisson--Robin problem $({\rm PR}_p)_L$ and the $L^p$ Poisson--Robin-regularity problem
$({\rm PRR}_p)_L$ simply by $({\rm PR}_p)_\Delta$ and $({\rm PRR}_p)_\Delta$, respectively.

From Theorems \ref{PRandPRR} and \ref{RandPR} and Proposition \ref{DNR}, we directly deduce the following conclusion
on the solvability of the problems $({\rm PR}_p)_\Delta$ and $({\rm PRR}_p)_\Delta$.

\begin{theorem}\label{DeltaPRR}
Let $n\ge3$, $\Omega\subset\mathbb{R}^n$ be a bounded Lipschitz domain, and $\alpha$ be the same as in \eqref{e1.3}.
Then the following assertions hold.

\begin{enumerate}[{\rm (i)}]
\item There exists a positive constant $\varepsilon\in(0,1]$, depending only on the Lipschitz constant of $\Omega$ and $n$, such that,
for any $p\in(2-\varepsilon,\infty)$, the Poisson--Robin problem $({\rm PR}_p)_{\Delta}$ is solvable.

\item There exists a positive constant $\varepsilon\in(0,\infty)$, depending only on the Lipschitz constant of $\Omega$ and $n$, such that, for any
$p\in(1,2+\varepsilon)$, the Poisson--Robin-regularity problem $({\rm PRR}_p)_{\Delta}$ is solvable.
\end{enumerate}
\end{theorem}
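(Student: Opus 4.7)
The plan is to treat $L:=\Delta$, noting that $L$ is self-adjoint so $L^\ast=\Delta$, and to deduce both statements directly from the general machinery of Theorems \ref{PRandPRR} and \ref{RandPR} combined with the classical solvability statement Proposition \ref{DNR}. I will prove (ii) first, and then bootstrap (i) from (ii) via the duality in Theorem \ref{PRandPRR}(i) and the monotonicity in Theorem \ref{RandPR}(iv).

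For (ii), let $\varepsilon_1\in(0,1]$ and $\varepsilon_2\in(0,\infty)$ be the constants produced by Proposition \ref{DNR}, so that $({\rm D}_s)_\Delta$ is solvable for $s\in(2-\varepsilon_1,\infty)$ and $({\rm R}_p)_\Delta$ is solvable for $p\in(1,2+\varepsilon_2)$. For $p\in(1,\infty)$, the H\"older conjugate satisfies $p'>2-\varepsilon_1$ precisely when $p<\frac{2-\varepsilon_1}{1-\varepsilon_1}$ (interpreted as $p<\infty$ if $\varepsilon_1=1$); an elementary calculation shows that $\frac{2-\varepsilon_1}{1-\varepsilon_1}>2$, so there exists $\varepsilon\in(0,\infty)$ with $2+\varepsilon\le\min\{2+\varepsilon_2,\frac{2-\varepsilon_1}{1-\varepsilon_1}\}$ for which both $({\rm R}_p)_\Delta$ and $({\rm D}_{p'})_\Delta$ are solvable whenever $p\in(1,2+\varepsilon)$. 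Applying Theorem \ref{RandPR}(ii) with $L=L^\ast=\Delta$ then gives the solvability of $({\rm PRR}_p)_\Delta$ on the desired range, proving (ii).

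For (i), I would combine (ii) with the duality from Theorem \ref{PRandPRR}(i) and the monotonicity in $p$ from Theorem \ref{RandPR}(iv). Namely, by (ii) the Poisson--Robin-regularity problem $({\rm PRR}_q)_\Delta$ is solvable for every $q\in(1,2+\varepsilon)$; since $\Delta^\ast=\Delta$, Theorem \ref{PRandPRR}(i) converts this into the solvability of $({\rm PR}_{q'})_\Delta$ for all such $q$. As $q$ ranges over $(1,2+\varepsilon)$, the conjugate $q'$ covers the interval $(\frac{2+\varepsilon}{1+\varepsilon},\infty)$, and a direct computation shows $\frac{2+\varepsilon}{1+\varepsilon}<2$, so picking any $p_0\in(\frac{2+\varepsilon}{1+\varepsilon},2)$ one obtains the solvability of $({\rm PR}_{p_0})_\Delta$. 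Applying Theorem \ref{RandPR}(iv) then propagates solvability to every $p\in[p_0,\infty)$, so choosing $\varepsilon':=2-p_0\in(0,1]$ yields the solvability of $({\rm PR}_p)_\Delta$ for all $p\in(2-\varepsilon',\infty)$, which is the assertion of (i) (after relabeling $\varepsilon'$ as $\varepsilon$).

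There is no genuinely hard step left: the work lies entirely in verifying that the ranges of $p$ produced by duality and by Proposition \ref{DNR} overlap properly, and in tracking the H\"older conjugacy carefully. The only minor technical point to watch is the bookkeeping of the constants $\varepsilon_1,\varepsilon_2$ versus the output $\varepsilon$, and in particular ensuring that the interval $(\frac{2+\varepsilon}{1+\varepsilon},\infty)$ obtained after dualization strictly contains a point below $2$, which is automatic because $\frac{2+\varepsilon}{1+\varepsilon}=2-\frac{\varepsilon}{1+\varepsilon}<2$ for every $\varepsilon>0$.
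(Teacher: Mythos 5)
Your proposal is correct and follows essentially the same route as the paper: combine Proposition \ref{DNR} with Theorem \ref{RandPR}(ii) (Robin plus dual Dirichlet solvability gives $({\rm PRR}_p)_\Delta$) and the duality of Theorem \ref{PRandPRR}(i) using $\Delta^\ast=\Delta$, with correct bookkeeping of the H\"older conjugate ranges. The only remark is that the appeal to Theorem \ref{RandPR}(iv) in part (i) is redundant, since duality already yields $({\rm PR}_{q'})_\Delta$ for all $q\in(1,2+\varepsilon)$, i.e. for every $p=q'$ in the interval $(\frac{2+\varepsilon}{1+\varepsilon},\infty)$, which is already of the form $(2-\varepsilon',\infty)$.
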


\begin{remark}
We point out that the ranges of $p$ for the solvability of the Poisson--Robin problem $({\rm PR}_p)_{\Delta}$ and
the Poisson--Robin-regularity problem $({\rm PRR}_p)_{\Delta}$ in Theorem \ref{DeltaPRR} are sharp.
Indeed, it is known that the range $p\in(1,2+\varepsilon)$ of $p$ for the solvability of the Robin problem
$({\rm R}_p)_{\Delta}$ is sharp (see, for instance, \cite[Theorems 2.8 and 4.3]{ls05}). From this and Theorems \ref{PRandPRR}(i) and \ref{RandPR}(i), we deduce
that the range $(2-\varepsilon,\infty)$ of $p$ for the solvability of the Poisson--Robin problem $({\rm PR}_p)_{\Delta}$ and
the range $p\in(1,2+\varepsilon)$ of $p$ for the solvability of the Poisson--Robin-regularity problem $({\rm PRR}_p)_{\Delta}$ are sharp.
\end{remark}

Moreover, when $\Omega$ is a bounded $C^1$ domain, we have the following solvability of the Dirichlet problem $({\rm D}_p)_{\Delta}$
and the Robin problem $({\rm R}_p)_{\Delta}$ with any $p\in(1,\infty)$ (see, for example, \cite{fjr78,k94,ls05}).

\begin{proposition}\label{p1}
Let $n\ge3$, $\Omega\subset\mathbb{R}^n$ be a bounded $C^1$ domain, $p\in(1,\infty)$, and $\alpha$ satisfy \eqref{e1.3} and
$\alpha\in L^p(\partial\Omega,\sigma)$ when $p\in(n-1,\infty)$.
Then the Dirichlet problem $({\rm D}_p)_{\Delta}$ and the Robin problem $({\rm R}_p)_{\Delta}$ are solvable.
\end{proposition}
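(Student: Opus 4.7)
The plan is to establish both solvability statements through the classical method of boundary integral equations with layer potentials, which is the standard approach on $C^1$ domains and is the foundation of the references cited. The key structural fact on $C^1$ domains is that the boundary double layer potential operator $K$ and its adjoint $K^*$ are compact on $L^p(\partial\Omega,\sigma)$ for every $p\in(1,\infty)$. This is the celebrated result of Fabes--Jodeit--Rivi\`ere, and it contrasts sharply with the Lipschitz case, where only bounded invertibility in a restricted range of $p$ is available.

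First I would handle the Dirichlet problem $({\rm D}_p)_\Delta$. Represent the candidate solution as a double layer potential $u=\mathcal{D}g$ for an unknown density $g\in L^p(\partial\Omega,\sigma)$. By the classical jump relations, the boundary condition $u=f$ reduces to the integral equation
\begin{equation*}
\left(\tfrac{1}{2}I+K\right)g=f\qquad\text{on }\partial\Omega.
\end{equation*}
Since $K$ is compact on $L^p(\partial\Omega,\sigma)$ on $C^1$ domains, $\tfrac{1}{2}I+K$ is Fredholm of index zero, so invertibility follows from injectivity, which in turn is obtained from the maximum principle together with a uniqueness argument for the interior and exterior Dirichlet problems. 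The non-tangential maximal function estimate $\|\mathcal{N}(\mathcal{D}g)\|_{L^p(\partial\Omega,\sigma)}\lesssim\|g\|_{L^p(\partial\Omega,\sigma)}$ then yields the desired bound $\|\widetilde{\mathcal{N}}(u)\|_{L^p(\partial\Omega,\sigma)}\lesssim\|f\|_{L^p(\partial\Omega,\sigma)}$.

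Next I would treat the Robin problem $({\rm R}_p)_\Delta$ along the lines of Lanzani--Shen. Represent $u=\mathcal{S}g$ as a single layer potential. The jump relations for the normal derivative convert the Robin boundary condition $\partial_{\boldsymbol\nu}u+\alpha u=f$ into
\begin{equation*}
\left(-\tfrac{1}{2}I+K^{*}\right)g+\alpha\,\mathcal{S}g=f\qquad\text{on }\partial\Omega.
\end{equation*}
On $C^1$ domains, $K^*$ is compact on $L^p(\partial\Omega,\sigma)$ for all $p\in(1,\infty)$, and $\mathcal{S}$ maps $L^p(\partial\Omega,\sigma)$ into a fractional Sobolev space that embeds compactly into $L^p(\partial\Omega,\sigma)$. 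Combined with the hypothesis on $\alpha$ (which guarantees that multiplication by $\alpha$ is bounded on the image of $\mathcal{S}$ when $p\in(n-1,\infty)$, where the continuous embedding of $\mathcal{S}(L^p)$ into $L^\infty$ could otherwise fail), the perturbation $\alpha\,\mathcal{S}$ is compact on $L^p(\partial\Omega,\sigma)$. Hence $-\tfrac{1}{2}I+K^{*}+\alpha\,\mathcal{S}$ is Fredholm of index zero, and invertibility reduces to injectivity, which follows from the uniqueness result for weak solutions to the Robin problem via Green's identity and the sign condition on $\alpha$ (together with $\alpha\ge c_0$ on a set of positive measure). The associated non-tangential maximal function estimate $\|\widetilde{\mathcal{N}}(\nabla\mathcal{S}g)\|_{L^p(\partial\Omega,\sigma)}\lesssim\|g\|_{L^p(\partial\Omega,\sigma)}$ then delivers the desired solvability.

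The hard part is the two compactness ingredients that underlie both arguments: the compactness of $K$ (equivalently $K^*$) on $L^p(\partial\Omega,\sigma)$ for the Dirichlet half, and the compactness of the perturbation $\alpha\,\mathcal{S}$ for the Robin half. The former is precisely the Fabes--Jodeit--Rivi\`ere theorem, whose proof exploits the vanishing oscillation of the unit normal on $C^1$ domains to approximate $K$ uniformly in operator norm by operators with smooth kernels, so it would be invoked rather than re-derived. The latter requires carefully matching the integrability assumption on $\alpha$ in \eqref{e1.3} with the mapping properties of $\mathcal{S}$, which is straightforward for $p\le n-1$ via Sobolev embedding and which motivates the extra assumption $\alpha\in L^p(\partial\Omega,\sigma)$ when $p\in(n-1,\infty)$.
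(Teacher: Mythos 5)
Your proposal is correct and takes essentially the same route as the paper, which gives no proof of this proposition but cites Fabes--Jodeit--Rivi\`ere, Kenig's CBMS notes, and Lanzani--Shen, whose arguments are exactly the layer-potential and Fredholm-compactness scheme you describe (compactness of $K$ and $K^*$ on $C^1$ boundaries, compactness of $\alpha\,\mathcal{S}$ under the stated integrability of $\alpha$, and injectivity via uniqueness for the Dirichlet and Robin problems). The only points you leave implicit --- checking that the $L^p$ and $L^2$ inverses are consistent so the layer-potential solution coincides with the Lax--Milgram weak solution, and bootstrapping kernel elements into $L^2$ before applying Green's identity --- are standard and do not affect the argument.
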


As corollaries of Theorems \ref{PRandPRR} and \ref{RandPR} and Proposition \ref{p1}, we have
the following solvability of $({\rm PR}_p)_\Delta$ and $({\rm PRR}_p)_\Delta$ in the case of bounded $C^1$ domains.

\begin{corollary}\label{DeltaPR}
Let $n\ge3$, $\Omega\subset\mathbb{R}^n$ be a bounded $C^1$ domain, $p\in(1,\infty)$, and $\alpha$ satisfy \eqref{e1.3} and
$\alpha\in L^p(\partial\Omega,\sigma)$ when $p\in(n-1,\infty)$. Then the Poisson--Robin problem
$({\rm PR}_p)_{\Delta}$ and the Poisson--Robin-regularity problem $({\rm PRR}_p)_{\Delta}$ are solvable.
\end{corollary}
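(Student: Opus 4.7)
The plan is to deduce Corollary \ref{DeltaPR} as a direct consequence of the general machinery already developed, combined with the classical solvability of the Dirichlet and Robin problems for $-\Delta$ on a bounded $C^1$ domain recorded in Proposition \ref{p1}. The key simplification in the $C^1$ setting is that the Laplacian is self-adjoint, so $L=L^*=-\Delta$, and Proposition \ref{p1} supplies solvability at \emph{every} exponent in $(1,\infty)$ (subject to the appropriate integrability of $\alpha$), which lets us freely swap $p$ with its H\"older conjugate $p'$.

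First I would fix $p\in(1,\infty)$ and let $p'$ denote its H\"older conjugate. A short case analysis verifies that the integrability hypothesis of Proposition \ref{p1} on $\alpha$ is available at whichever of $p$ or $p'$ lies in $(n-1,\infty)$: when the relevant exponent stays in $(1,n-1]$, the standing assumption \eqref{e1.3} already delivers the needed $\alpha\in L^{n-1+\varepsilon_0}(\partial\Omega,\sigma)$; when it exceeds $n-1$, the explicit hypothesis $\alpha\in L^p(\partial\Omega,\sigma)$ together with the finiteness of $\sigma(\partial\Omega)$ and H\"older's inequality supplies the remaining integrability. Consequently, Proposition \ref{p1} yields the solvability of $({\rm R}_p)_\Delta$ and $({\rm D}_{p'})_\Delta$, and plugging these two inputs into Theorem \ref{RandPR}(ii) produces the solvability of $({\rm PRR}_p)_\Delta$.

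Next, to obtain the solvability of $({\rm PR}_p)_\Delta$, I would apply the previous step with $p$ replaced by $p'$, thereby producing the solvability of $({\rm PRR}_{p'})_\Delta$. The equivalence $({\rm PR}_{p})_{L^*}\Leftrightarrow ({\rm PRR}_{p'})_L$ furnished by Theorem \ref{PRandPRR}(i), specialised to $L=L^*=-\Delta$, then immediately upgrades this to $({\rm PR}_p)_\Delta$.

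No genuine obstacle is anticipated: the whole argument is a short chain of invocations of Proposition \ref{p1}, Theorem \ref{RandPR}(ii), and Theorem \ref{PRandPRR}(i). The only step that takes a moment of care is the bookkeeping on $\alpha$ at the two exponents $p$ and $p'$, but this is purely routine and is already encoded in the hypotheses of the corollary.
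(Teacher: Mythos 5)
Your overall route is exactly the one the paper intends (it states the corollary as a direct consequence of Proposition \ref{p1}, Theorem \ref{RandPR}(ii), and Theorem \ref{PRandPRR}(i), using $L=L^*=-\Delta$), and the $({\rm PRR}_p)_{\Delta}$ half is fine as you argue: $({\rm R}_p)_{\Delta}$ needs $\alpha\in L^p$ only when $p>n-1$, which is precisely the stated hypothesis, and $({\rm D}_{p'})_{\Delta}$ involves no $\alpha$.

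There is, however, a genuine flaw in your bookkeeping for the $({\rm PR}_p)_{\Delta}$ half. To run the chain $({\rm PR}_p)_{\Delta}\Leftrightarrow({\rm PRR}_{p'})_{\Delta}$ you must invoke Proposition \ref{p1} at the exponent $p'$ for the Robin problem $({\rm R}_{p'})_{\Delta}$, and when $p'>n-1$ (equivalently $p<\tfrac{n-1}{n-2}$, so in particular $p<2\le n-1$) this requires $\alpha\in L^{p'}(\partial\Omega,\sigma)$. Your claim that ``the explicit hypothesis $\alpha\in L^p$ together with the finiteness of $\sigma(\partial\Omega)$ and H\"older's inequality supplies the remaining integrability'' is backwards: on the finite measure space $\partial\Omega$, H\"older only lowers the integrability exponent, so neither $\alpha\in L^p$ with $p<2$ (which is in any case not even assumed here, since the hypothesis $\alpha\in L^p$ is only imposed when $p>n-1$) nor the standing assumption $\alpha\in L^{n-1+\varepsilon_0}$ from \eqref{e1.3} yields $\alpha\in L^{p'}$ once $p'>n-1+\varepsilon_0$ (e.g.\ $n=3$, $p=1.1$, $p'=11$). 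Thus, as written, your argument only produces $({\rm PR}_p)_{\Delta}$ for $p$ with $p'\le n-1+\varepsilon_0$, i.e.\ $p\ge(n-1+\varepsilon_0)'$; for smaller $p$ one needs the additional assumption $\alpha\in L^{p'}(\partial\Omega,\sigma)$ (and note that the upward extrapolation in Theorem \ref{RandPR}(iv) goes in the wrong direction to rescue this). To be fair, this gap mirrors an imprecision already present in the corollary's hypothesis, which the paper does not address since it gives no detailed proof; but the specific H\"older step you propose to close it is false and should be replaced by the correct integrability assumption on $\alpha$ at the exponent $p'$.
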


\medskip

\noindent\textbf{Data Availability}\quad  Data sharing not applicable
to this article as no datasets were generated or analyzed during
the current study.

%

\bigskip

\noindent Xuelian Fu and Sibei Yang

\medskip

\noindent School of Mathematics and Statistics, Gansu Key Laboratory of Applied Mathematics
and Complex Systems, Lanzhou University, Lanzhou 730000, The People's Republic of China

\smallskip

\smallskip

\noindent {\it E-mails}: \texttt{fuxl2023@lzu.edu.cn} (X. Fu)

\hspace{0.888cm} \texttt{yangsb@lzu.edu.cn} (S. Yang)

\bigskip
	
\noindent Dachun Yang (Corresponding author)
	
\medskip
	
\noindent Laboratory of Mathematics and Complex Systems (Ministry of Education of China),
School of Mathematical Sciences, Beijing Normal University, Beijing 100875,
The People's Republic of China
	
\smallskip
	
\noindent{\it E-mail:} \texttt{dcyang@bnu.edu.cn}
	
\end{document}